\newtheorem{Satz}{Theorem}[section]
\newtheorem{Theorem}[Satz]{Theorem}
\newtheorem{Lemma}[Satz]{Lemma}		   
\newtheorem{Prop}[Satz]{Proposition}	                  
\numberwithin{equation}{section} 
\theoremstyle{definition}
\newtheorem{Definition}[Satz]{Definition} 
\newtheorem{Remark}[Satz]{Remark} 
\newtheorem{Assumption}[Satz]{Assumption}
\newcommand{\R}{\mathbb{R}} 
\newcommand{\Rd}{{\mathbb{R}^d}} 
\newcommand{\N}{\mathbb{N}} 
\newcommand{\MF}[1]{\mathbb{M}^p_\Filtration(\Omega,E_{[0,#1]})}
\newcommand{\MFshort}[1]{\mathbb{M}^p_\Filtration(\Omega,E_{#1})}
\newcommand{\MFshortTauEins}[1]{\mathbb{M}^p_{\Filtration^{\tau_1}}(\Omega,E_{#1})}
\newcommand{\MFHS}[1]{\mathcal{M}^p_{\Filtration^T_0}([0,T],\HS(Y,\Lzwei)}
\newcommand{\MFshortShifted}[2]{\mathbb{M}^p_{\Filtration^{#1}}(\Omega,E_{#2})}
\newcommand{\MFloc}[1]{\mathbb{M}^p_\Filtration(\Omega,E_{[0,#1)})}
\newcommand{\E}{\mathbb{E}}
\newcommand{\Prob}{\mathbb{P}}
\newcommand{\F}{\mathcal{F}}
\newcommand{\Filtration}{\mathbb{F}}
\newcommand{\HS}{\operatorname{HS}}
\newcommand{\Yosida}{R_\nu}
\newcommand{\mass}{\mathcal{M}}
\newcommand{\Lzwei}{{L^2(\Rd)}}
\newcommand{\Heins}{{H^1(\Rd)}}
\newcommand{\Hs}{{H^s(\Rd)}}
\newcommand{\LInfty}{{L^\infty(\Rd)}}
\newcommand{\LalphaPlusEins}{{L^{\alpha+1}(\Rd)}}
\newcommand{\df }{\mathrm{d}}
\newcommand{\im }{\mathrm{i}}
\newcommand{\sumM }{\sum_{m=1}^{\infty}}
\newcommand{\Real}{\operatorname{Re}}
\newcommand{\skpLzwei}[2]{\big(#1,#2\big)_{L^2}}
\newcommand{\norm}[1]{\Vert #1 \Vert}
\newcommand{\bigNorm}[1]{\left\Vert #1 \right\Vert}
\newcommand{\cutoffUdot}{\varphi_n(u)}
\newcommand{\cutoffUEinsDot}{\varphi_n(u_1)}
\newcommand{\cutoffUZweiDot}{\varphi_n(u_2)}
\newcommand{\FpSpaceIniNorm}{{\MFshort{r}}}
\newcommand{\FpSpaceIni}{{\MFshort{r}}}
\newcommand{\Addresses}{{
		\bigskip
		\footnotesize
		
		
		F.~Hornung, Institute for Analysis, Karlsruhe Institute for Technology (KIT), 76128 Karlsruhe, Germany\par\nopagebreak
		\textit{E-mail address}: fabian.hornung@kit.edu
		
		\medskip
		
		
	}}
\title{The nonlinear stochastic Schr\"odinger equation via stochastic Strichartz estimates}
\author{FABIAN HORNUNG 
	}
\date{\today}
\begin{document}

\begin{abstract}
We consider the stochastic NLS  with nonlinear Stratonovic noise for initial values in $\Lzwei$ and prove local existence and uniqueness of a mild solution for subcritical and critical nonlinearities. The proof is based on deterministic and stochastic Strichartz estimates.  In the subcritical case we prove that the solution is global, if we impose an additional assumption on the nonlinear noise. 
\end{abstract}
\maketitle


\medskip
\noindent
\textbf{Mathematics Subject Classification (2010):} 35Q41, 35R60, 60H15, 60H30   

\medskip
\noindent
\textbf{Keywords:} Nonlinear Schr\"odinger equation, Stratonovich Noise, Stochastic Strichartz estimates 
\section{Introduction}
This article studies the following stochastic nonlinear Schr\"{o}dinger equation
\begin{equation}\label{ProblemStratonovich}
\left\{
\begin{aligned}
\df u(t)&= \Big(\im \Delta u(t)-\im \lambda \vert u(t) \vert^{\alpha-1} u(t)-\frac{1}{2} \sumM \vert e_m\vert^2 \vert u(t)\vert^{2(\gamma-1)}u(t)\Big) \df t\\
&\hspace{8cm}-\im \sumM e_m \vert u(t)\vert^{\gamma-1}u(t) \df \beta_m(t),\\
u(0)&=u_0,
\end{aligned}\right.
\end{equation}	
in $\Lzwei$ with $\lambda\in \{-1,1\},$ $\alpha\in(1,1+\frac{4}{d}],$ $\gamma\in [1,1+\frac{2}{d}],$ $(e_m)_{m\in\N}\subset \LInfty$ and independent Brownian motions $\left(\beta_m\right)_{m\in\N}.$ We remark that the choice of the correction term is natural in the sense that we have 
\begin{align*}
-\im \sumM e_m \vert u(t)\vert^{\gamma-1}u(t) \circ\df \beta_m(t)=-\im \sumM e_m \vert u(t)\vert^{\gamma-1}u(t)\df \beta_m(t)-\frac{1}{2} \sumM  e_m^2 \vert u(t)\vert^{2(\gamma-1)}u(t) \df t
\end{align*}
and therefore, for conservative noise, i.e. real valued coefficients $e_m,$ \eqref{ProblemStratonovich} is a stochastic NLS with Stratonovich noise. \\
The nonlinear Schr\"odinger equation can be seen as a model for nonlinear dispersive equations and enjoys physical significance in the description of nonlinear wave phenomena. In some situations, there is a random potential in the equation which can be modeled by multiplicative Stratonovich noise. In \cite{PhysicsPaper}, the equation \eqref{ProblemStratonovich} appears with parameters $d=2,$ $\alpha=3$ and $\gamma=1,$  in the context of Scheibe aggregates with thermal fluctuations.  \\

In the literature, existence and uniqueness of the solutions to the NLS in $\Rd$ with linear multiplicative noise was studied by de Bouard and Debussche in \cite{BouardLzwei}, \cite{BouardHeins} followed by a series of papers concerning blow-up behavior and numerical studies (see \cite{deBouardDebusscheBlowUp1},\cite{deBouardDebusscheBlowUp2}, \cite{DebusscheNumericalFocusing}, \cite{deBouardDebusscheSemidiscreteScheme}) and by Barbu, R\"ockner and Zhang in \cite{BarbuL2},\cite{BarbuH1}, \cite{BarbuNoBlowUp} and \cite{ZhangStochasticStrichartz}. In \cite{StochStrichartz}, Brze\'zniak and Millet derived a new estimate for the stochastic convolution associated to the Schr\"odinger group. In contrast to \cite{BouardLzwei}, where the authors work directly with the dispersive estimate of the Schr\"odinger group, the estimate from \cite{StochStrichartz}  is based on the deterministic Strichartz inequality. This allowed them to prove global existence and uniqueness for the NLS with nonlinear Stratonovich noise on compact, two dimensional manifolds, where the dispersive estimate is not valid and has to be replaced by localized version, see \cite{Burq}. 
In this article, we show how to use the estimate from \cite{StochStrichartz} to improve the results from \cite{BouardLzwei} and \cite{BarbuL2}.\\

Let us compare our approach, assumptions and results to the articles \cite{BouardLzwei} and \cite{BarbuL2} in detail.
In \cite{BouardLzwei}, de Bouard and Debussche choose a direct approach to \eqref{ProblemStratonovich}. They reformulate the equation in a fixed point problem which they solve using Strichartz estimates complemented by an estimate of the stochastic convolution. Unfortunately, the fixed point argument only works in the case of linear noise, i.e. $\gamma=1.$ Moreover, de Bouard and Debussche impose the additional restriction $\alpha \in (1,1+\frac{2}{d-1})$ for $d\ge 3$ compared to the subcritical range $\alpha\in (1,1+\frac{4}{d})$ and their
 assumption on the noise coefficients corresponds to the square function estimate
\begin{align}\label{deBouardAssumption}
	\bigNorm{\left(\sumM \vert e_m\vert^2\right)^\frac{1}{2}}_{\Lzwei\cap L^{2+\delta}(\Rd)}<\infty
\end{align}
for some $\delta>2(d-1).$  As a result, they obtain the existence and uniqueness of a global solution $u\in L^\rho(\Omega,C([0,T],\Lzwei))\cap L^1(\Omega, L^r(0,T;L^p(\Rd)))$ for some $\rho,r,p\in (2,\infty)$ depending on $\delta$ and $\alpha,$ where $(p,r)$ is a pair of Strichartz exponents, i.e. $\frac{d}{p}+\frac{2}{r}=\frac{d}{2}.$

The approach of Barbu, R\"ockner and Zhang in \cite{BarbuL2} is different. For a finite dimensional noise $W=\sum_{m=1}^{M} e_m \beta_m,$ they reduce \eqref{ProblemStratonovich} to a non-autonomous nonlinear Schr\"odinger equation with random coefficients via the scaling transformation $u=e^{-\im W}y.$
Generally speaking, the main advantage of this approach is the fact
that the equation can be solved pathwisely. This allows to use the well known fixed point argument for the deterministic NLS (see for example \cite{Cazenave}, \cite{Linares}), as soon as Strichartz estimates or non-autonomous operators of the form
\begin{align}\label{nonautonomousOperator}
A(s):=\im \left(\Delta+b(s) \cdot \nabla+c(s)\right)
\end{align}
are available. In particular, the full range of subcritical exponents $\alpha\in (1,1+\frac{4}{d})$ is accessible and a transfer of the argument  to higher regularity is easier compared to \cite{BouardLzwei} and the present article, see Remark \ref{HeinsTransfer} below.  

However, the theory for \eqref{nonautonomousOperator} is less  developed than the theory for $\im \Delta.$
On $\Rd$, for example, one can use  results \cite{Doi96} and \cite{Marzuola}. We also refer to the recent article \cite{ZhangStochasticStrichartz} by Zhang which deals with pathwise Strichartz estimates and application of the rescaling approach to a more general class of stochastic dispersive equations on $\Rd$. However, Strichartz estimates for \eqref{nonautonomousOperator} on other geometries like compact manifolds are not available so far. Moreover, the Strichartz estimates for \eqref{nonautonomousOperator} need regular coefficients, which leads 
to the regularity and decay condition
\begin{align}\label{BarbuDecay}
e_m\in C_b^2(\Rd),\qquad \lim_{\vert \xi \vert \to \infty } \eta (\xi) \left( \vert e_m(\xi)\vert+\vert \nabla e_m(\xi)\vert+\vert \Delta e_m(\xi)\vert \right)=0 
\end{align}
with 
\begin{equation*}
\eta(\xi):=\left\{
\begin{aligned}
&1+\vert \xi \vert ^2, \hspace{4cm}d \neq 2, \\
&(1+\vert \xi \vert ^2)(\log(2+\vert \xi \vert ^2))^2,\hspace{1cm} d=2.
\end{aligned}\right.
\end{equation*}
Another drawback of the rescaling approach is the fact that the transformation $u=e^{-\im W}y$ only works for linear multiplicative noise.

Assuming  \eqref{BarbuDecay} and $\gamma=1,$ Barbu, R\"{o}ckner and Zhang prove pathwise global wellposedness of \eqref{ProblemStratonovich}, i.e. existence and uniqueness in $C([0,T],\Lzwei)\cap L^q(0,T;L^{\alpha+1}(\Rd))$ for almost all $\omega\in \Omega$ and almost sure continuous dependence on the data,  for $\alpha\in (1,1+\frac{4}{d})$ and pathwise local wellposedness for $\alpha=1+\frac{4}{d},$  see \cite{BarbuL2}, Theorem 2.2 and Corollary 5.2. \\

The present article is motivated by the following two goals: 
\begin{itemize}
	\item We would like to treat nonlinear noise and weaken the assumption \eqref{BarbuDecay} from \cite{BarbuL2}.
	\item We would like to avoid the restriction from \cite{BouardLzwei} and allow the full range of subcritical exponents $\alpha\in (1,1+\frac{4}{d})$ in a global existence and uniqueness result.
\end{itemize}
To archieve this, we apply the direct approach by de Bouard and Debussche, but we substitute their estimate of the stochastic convolution by the stochastic Strichartz estimate due to Brzezniak and Millet, which reads
\begin{align}\label{StrichartzBrzezniak}
\E \bigNorm{\int_0^\cdot U(\cdot-s)\varPhi(s)\df \beta(s)}_{L^q(0,T;L^p)}^r
\lesssim \E \norm{\varPhi}_{L^2(0,T;L^2)}^r
\end{align}
in a simplified form (see Proposition \ref{StochStrichartz} for the details). Here, $(q,p)$ is an arbitrary Strichartz pair and thus, the restriction of $\alpha$ can be avoided. Moreover,  we observe that  the stochastic convolution improves  integrability in time and space from $2$ to $q>2$ and $p>2,$ respectively. This can be used to deal with nonlinear noise. The results of this article are compressed in the following Theorem.

\begin{Theorem}\label{mainTheorem}
	Let $u_0\in\Lzwei,$ $\lambda\in\{-1,1\},$  $\left(\beta_m\right)_{m\in\N}$ be a sequence of independent Brownian motions and $(e_m)_{m\in\N}\subset \LInfty$ with 
	\begin{align}\label{noiseBoundsHIntroduction}
	\sumM \norm{e_m}_{L^\infty}^2<\infty.
	\end{align}
	Then, the following assertions hold:
	\begin{enumerate}
		\item[a)] Let $\alpha\in (1,1+\frac{4}{d}]$ and $\gamma \in [1,1+\frac{2}{d}].$ Then, there is a unique local mild solution of \eqref{ProblemStratonovich} in $\Lzwei.$
		\item[b)] Let $\alpha\in (1,1+\frac{4}{d})$ and $\gamma=1.$	
		Then,  the solution from a) is global.
				\item[c)] Let $e_m$ be real valued for each $m\in \N,$ $\alpha\in (1,1+\frac{4}{d})$ and 
					\begin{align*}
					1< \gamma <\frac{\alpha-1}{\alpha+1} \frac{4+d(1-\alpha)}{4\alpha+d(1-\alpha)}+1.
					\end{align*}
				Then,  the solution from a) is global.
	\end{enumerate}
\end{Theorem}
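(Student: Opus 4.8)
The plan is to base everything on the mild (Duhamel) formulation of \eqref{ProblemStratonovich},
\begin{align*}
u(t)=U(t)u_0&-\im\lambda\int_0^t U(t-s)\vert u(s)\vert^{\alpha-1}u(s)\,\df s-\frac12\sumM\int_0^t U(t-s)\vert e_m\vert^2\vert u(s)\vert^{2(\gamma-1)}u(s)\,\df s\\
&-\im\sumM\int_0^t U(t-s)e_m\vert u(s)\vert^{\gamma-1}u(s)\,\df\beta_m(s),
\end{align*}
with $U(t)=e^{\im t\Delta}$ the free Schr\"odinger group. For part~a) I would solve this fixed point problem in a Strichartz space $E_T=C([0,T],\Lzwei)\cap L^q(0,T;\LalphaPlusEins)$, where $(q,\alpha+1)$ is an admissible Strichartz pair, with the probabilistic integrability of $L^\rho(\Omega;E_T)$ over $\Filtration$-adapted processes. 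To the three Lebesgue integrals I would apply the inhomogeneous deterministic Strichartz estimate, and to the stochastic convolution the estimate \eqref{StrichartzBrzezniak} of Proposition~\ref{StochStrichartz}. Since $\vert u\vert^{\alpha-1}u$ and $\vert u\vert^{\gamma-1}u$ are only locally Lipschitz, I would first truncate them with the cutoff functions $\theta_n(\norm{u}_{E_\cdot})$ introduced above, turning the map into a global contraction on a short interval: the subcritical gap $\alpha<1+\tfrac4d$ produces a small positive power of the interval length in front of the nonlinear terms (the critical borderline $\alpha=1+\tfrac4d$ being handled instead by absolute continuity of the $L^q_t\LalphaPlusEinsKurz_x$-norm). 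Banach's fixed point theorem then yields a unique global solution $u_n$ of each truncated equation.

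The local solution of a) is recovered by patching: with the stopping times $\tau_n=\inf\{t:\norm{u_n}_{E_t}\ge n\}$, the truncated solutions coincide on $[0,\tau_n]$, hence define a consistent process up to the maximal time $\tau_\infty=\lim_n\tau_n$, and pathwise uniqueness follows from a Gronwall argument on the difference of two solutions. The main obstacle in this part is the treatment of the \emph{nonlinear} noise inside \eqref{StrichartzBrzezniak}: one must control $\Phi(s)=\sumM e_m\vert u(s)\vert^{\gamma-1}u(s)$ in $L^2(0,T;\HS)$, and the bound $\norm{\Phi(s)}_{\HS}^2\le\big(\sumM\norm{e_m}_{L^\infty}^2\big)\norm{u(s)}_{L^{2\gamma}}^{2\gamma}$ together with \eqref{noiseBoundsHIntroduction} reduces this to estimating $\norm{u}_{L^{2\gamma}}$ by interpolation between $\Lzwei$ and $\LalphaPlusEins$. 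This is exactly where the restriction $\gamma\le1+\tfrac2d$ enters, as it is the threshold beyond which the interpolated norm can no longer be dominated by the chosen Strichartz pair.

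For the global statements b) and c) the strategy is to preclude blow-up of $\tau_\infty$ by an a priori bound on the mass. Applying It\^o's formula to $\norm{u(t)}_{\Lzwei}^2$, the skew-adjointness of $\im\Delta$ and the identity $\Real\skpLzwei{-\im\lambda\vert u\vert^{\alpha-1}u}{u}=0$ annihilate the Hamiltonian drift, while the Stratonovich correction $-\tfrac12\sumM\vert e_m\vert^2\vert u\vert^{2(\gamma-1)}u$ cancels exactly the It\^o quadratic variation $\sumM\norm{e_m\vert u\vert^{\gamma-1}u}_{\Lzwei}^2$, leaving only the martingale $2\sumM\int_{\Rd}\operatorname{Im}(e_m)\vert u\vert^{\gamma+1}\,\df x\,\df\beta_m(t)$. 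In case b) ($\gamma=1$, possibly complex $e_m$) taking expectations gives $\E\norm{u(t)}_{\Lzwei}^2=\norm{u_0}_{\Lzwei}^2$, and since $\norm{u(t)}_{\Lzwei}^2$ is a nonnegative local martingale it stays almost surely finite on bounded intervals; in case c) (real $e_m$) the martingale vanishes and the mass is conserved pathwise. As $\alpha<1+\tfrac4d$ is strictly subcritical, the length of the contraction interval depends only on the $\Lzwei$-data, so the uniform mass bound lets me restart the local construction on successive intervals of comparable length, keeping the Strichartz norm finite on every $[0,T]$ and forcing $\tau_\infty=\infty$.

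The genuinely new difficulty is part~c), where for $\gamma>1$ the nonlinear noise does not disappear from the Strichartz analysis even though the mass is conserved. To close the global bound I would feed the conserved $\Lzwei$-norm into the stochastic Strichartz inequality and estimate $\int_0^T\norm{u(s)}_{L^{2\gamma}}^{2\gamma}\,\df s$ by interpolating $L^{2\gamma}$ between $\Lzwei$ and $\LalphaPlusEins$ and then applying H\"older in time against the exponent $q$. The noise contribution is subordinate to that of the deterministic nonlinearity precisely when the resulting interpolation and time exponents leave a positive power of $T$ (or a small constant) with which to run a bootstrap/Gronwall argument on the stopped Strichartz norm, and balancing these exponents is exactly what produces the admissible window
\begin{equation*}
1<\gamma<\frac{\alpha-1}{\alpha+1}\,\frac{4+d(1-\alpha)}{4\alpha+d(1-\alpha)}+1.
\end{equation*}
I expect the careful bookkeeping of these exponents---guaranteeing that the nonlinear-noise term can be absorbed into the left-hand side uniformly along the stopping-time localization---to be the main technical obstacle of the whole argument.
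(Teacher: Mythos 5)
Your outline of part a) and the truncation/Strichartz machinery matches the paper's proof in spirit, and your sketch for part c) --- feeding the pathwise mass bound into the stochastic Strichartz estimate, interpolating $L^{2\gamma}$ between $\Lzwei$ and $\LalphaPlusEins$, and balancing exponents to produce the window for $\gamma$ --- is essentially the paper's Proposition \ref{globalExistence}. However, there is a genuine gap in your globalization argument for part b): you claim that ``the length of the contraction interval depends only on the $\Lzwei$-data'', so that mass conservation allows restarting on intervals of comparable length. That is the \emph{deterministic} mechanism, and it fails here. In this framework the fixed point is run in $L^p(\Omega;E_r)$ for the \emph{truncated} equation; the contraction interval $r=r(n,\alpha,\gamma)$ depends on the truncation level $n$, and the fixed point solves the original equation only up to the stopping time $\tau_n$ at which the Strichartz norms $\norm{u_n}_{L^q(0,t;L^{\alpha+1})}+\norm{u_n}_{L^{\tilde{q}}(0,t;L^{2\gamma})}$ reach $n$. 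There is no pathwise local theory with existence time a function of $\norm{u(t_0)}_{L^2}$, because the stochastic convolution is controlled only in $L^p(\Omega)$, never pathwise; consequently the blow-up criterion involves the Strichartz norms rather than the mass --- a point the paper explicitly flags as the drawback of the truncation approach. Global existence in case b) therefore also requires the uniform expectation bound $\sup_{n\in\N}\E\norm{u_n}_{L^q(0,T;L^{\alpha+1})}\le C$; for $\gamma=1$ this is Proposition 4.1 of \cite{BouardLzwei}, which the paper invokes, and your It\^o mass identity alone does not close the argument. (Note also that for $\gamma=1$ with complex $e_m$ one cannot use the pathwise bound of Proposition \ref{massConservation}, which needs real coefficients; the paper's concluding remark instead bounds $\E\sup_{t\in[0,T]}\norm{u_n(t)}_{L^2}^p$ via Burkholder--Davis--Gundy and Gronwall.)

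Two smaller points in part a). First, your fixed-point space uses the single pair $(\alpha+1,q)$ and controls $\norm{u}_{L^{2\gamma}}$ by interpolation between $\Lzwei$ and $\LalphaPlusEins$; this requires $2\gamma\le\alpha+1$ and misses the admissible regime $\alpha+1<2\gamma$ (e.g.\ $\alpha$ near $1$ and $\gamma$ near $1+\frac{2}{d}$), which the paper covers by switching the base space to $L^{\tilde{q}}(0,T;L^{2\gamma}(\Rd))$. Relatedly, the threshold $\gamma\le 1+\frac{2}{d}$ comes from the Strichartz admissibility of the pair $(2\gamma,\tilde{q})$, not from the interpolation, so your attribution of that restriction is off. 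Second, at criticality ($\alpha=1+\frac{4}{d}$ or $\gamma=1+\frac{2}{d}$) the paper does not argue via absolute continuity of the $L^q_t L^{\alpha+1}_x$-norm --- a pathwise device unavailable here, since the stochastic convolution cannot be made small pathwise on a random interval --- but instead truncates at a \emph{small} level $\nu$, the stochastic analogue of the small-ball radius, and then iterates along the random times at which the Strichartz norm reaches $\nu$; this yields only a local solution, consistent with the statement of the theorem.
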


Let us briefly sketch the content the following sections devoted to the proof of Theorem \ref{mainTheorem}. In the next section, we fix the notations and assumptions, introduce the solution concept and recall the deterministic and stochastic Strichartz estimates. In the third paragragh, we prove local existence and uniqueness of \eqref{ProblemStratonovich} and treat the cases $\gamma=1$ and $\gamma\neq 1$ at once. To this end, we solve the slightly more general truncated problem
 \begin{equation}\label{ProblemStratonovichApproxIntro}
 \left\{
 \begin{aligned}
 \df u_n(t)=& \left(\im \Delta u_n(t)-\im \varphi_n(u_n,t) \vert u_n(t) \vert^{\alpha-1} u_n(t)\right) \df t\\&
 -\frac{1}{2}\sumM \left[\vert e_m\vert^2 \varphi_n(u_n,t)\vert u_n(t) \vert^{2(\gamma-1)} u_n(t)+B_m^* B_m u_n(t)\right]\df t\\&
 -\im\sumM \left[ \varphi_n(u_n,t) e_m   \vert u_n(t) \vert^{\gamma-1} u_n(t)  +B_m u_n(t)\right]\df \beta_m(t),\\
 u(0)=&u_0,
 \end{aligned}\right.
 \end{equation}		
where the noise term and the Stratonovich term are split into a nonlinear part and a linear one with general bounded operators $B_m$ on $\Lzwei.$ 
Moreover, we introduce a truncation function $\varphi_n(u_n,t)=\theta_n(Z_t(u_n))$ for a process
\begin{align}\label{definitionZ}
	Z_t(u_n):=\norm{u_n}_{L^q(0,t;L^{\alpha+1})}+\norm{u_n}_{L^{\tilde{q}}(0,t;L^{2\gamma})}
\end{align}
and 
 $\theta_n:[0,\infty)\to [0,1]$ with $\theta_n(x)=1$ for $x\in [0,n]$ and $\theta_n(x)=0$ for $x\ge 2n.$ In \eqref{definitionZ}, $q$ and $\tilde{q}$ are chosen such that $(\alpha+1,q)$ and $(2\gamma,\tilde{q})$ are Strichartz pairs.
 In order to construct a solution of \eqref{ProblemStratonovichApproxIntro}, we use a fixed point argument in the natural space $L^q(\Omega,E_T),$ where 
  \begin{align*}
  E_T:=\left\{
  \begin{aligned}
  &L^q(0,T;\LalphaPlusEins)\cap C([0,T],\Lzwei), \hspace{1.5cm} \alpha+1\ge 2\gamma, \\
  &L^{\tilde{q}}(0,T;L^{2\gamma}(\Rd))\cap C([0,T],\Lzwei),\hspace{1.75cm} \alpha+1< 2\gamma,
  \end{aligned}\right.
  \end{align*}
  depends on the dominant nonlinearity in \eqref{ProblemStratonovichApproxIntro}.  The truncation replaces the restriction to balls in $E_T,$ which is used in the deterministic setting, and permits the pathwise application of the deterministic Strichartz estimates.
Since the solution of \eqref{ProblemStratonovichApproxIntro} also solves the untruncated problem up to the stopping time 
\begin{align}\label{existenceTimes}
\tau_n:=\inf \left\{t\ge 0: Z_t(u_n)\ge n\right\}\land T,
\end{align} 
this yields a local solution $u$  to \eqref{ProblemStratonovich}  up to time $\tau_\infty:=\sup_{n\in\N}\tau_n.$  The uniqueness of the solution to \eqref{ProblemStratonovich} can be reduced to the uniqueness of \eqref{ProblemStratonovichApproxIntro}.\\
In the critical setting, i.e. $\alpha=1+\frac{4}{d}$ or $\gamma=1+\frac{2}{d},$ a similar argument yields a local solution. Note that in this case, we use the truncation $\varphi_\nu$ for a small $\nu\in(0,1)$ instead of $\varphi_n$ for a large $n\in\N.$ 

The definition of the existence times in \eqref{existenceTimes} shows that uniform bounds on the norms in \eqref{definitionZ} imply global existence. This is not convenient, since one would prefer a blow-up criterium including
the $L^2$-norm, which can be controlled by the Hamiltonian structure of the NLS. For linear noise, de Bouard and Debussche developed a strategy to get 
\begin{align*}	
\sup_{n\in\N} \E \Big[\norm{u_n}_{L^q(0,T;L^{\alpha+1})}\Big]\le C_T,
\end{align*}
see \cite{BouardLzwei}, Proposition 4.1. In the fourth section, we adapt this proof to our situation in order to prove part b) and c) of Theorem \ref{mainTheorem}.

\section{Setting and Strichartz estimates}

In this section, we introduce some notations, assumptions and solution concepts
and recall deterministic and stochastic Strichartz estimates, which will be used to construct the local solution.

\begin{Assumption}\label{stochasticAssumptions}
	We assume the following: 
	\begin{itemize}
		\item[i)] We fix  $d\in\N$ and $T>0.$ Moreover, let $u_0\in\Lzwei,$ $\lambda\in\{-1,1\}.$ We denote the Schr\"odinger group, i.e. the $C_0$-group of unitary operators generated by $\im \Delta,$  by $\left(U(t)\right)_{t\in\R}.$ 
		\item[ii)] Let $(\Omega,\F,\Prob)$ be a probability space, $Y$ be a separable Hilbert space with ONB $(f_m)_{m\in\N}$ and $W$ a cylindrical Wiener process in $Y$ adapted to a filtration $\Filtration$ satisfying the usual conditions.
		\item[iii)] 
		Let $\left(e_m\right)_{m\in\N}\subset \LInfty$ and  $\left(B_m\right)_{m\in\N}\subset \mathcal{L}(\Lzwei)$ with
		\begin{align*}
			\sumM \norm{e_m}_{L^\infty}^2<\infty,\qquad \sumM \norm{B_m}_{\mathcal{L}(L^2)}^2<\infty
		\end{align*}
		and define the linear bounded operators
		$B_1,B_2: \Lzwei \to \HS(Y,\Lzwei)$ by 
		\begin{align*}
			B_1(u)f_m:=e_m u,\qquad B_2(u)f_m:=B_m u,\qquad u\in\Lzwei,\quad m\in\N.
		\end{align*}
	\end{itemize}
	
\end{Assumption}
For presentation purposes, we used in the introduction that the process
\begin{align*}
	W=\sumM f_m \beta_m
\end{align*}
with a sequence $\left(\beta_m\right)_{m\in\N}$ of independent Brownian motions is a cylindrical Wiener process in $Y,$ see \cite{daPrato}, Proposition 4.7. 
In the proof of Theorem \ref{mainTheorem}, we want to avoid to treat the cases of linear noise, i.e. $\gamma=1,$ and nonlinear noise, i.e. $\gamma\neq 1,$ separately. Therefore, we study the slight generalization of \eqref{ProblemStratonovich} given by
\begin{equation}\label{ProblemStratonovichBasics}
\left\{
\begin{aligned}
\df u(t)&= \left[\im \Delta u(t)-\im \lambda \vert u(t) \vert^{\alpha-1} u(t)+\mu_1\left(\vert u(t)\vert^{2(\gamma-1)}u(t)\right)+\mu_2(u(t))\right] dt\\
&\hspace{8cm}-\im \left[B_1\left(\vert u(t)\vert^{\gamma-1}u(t)\right)+B_2 u(t)\right] \df W(t),\\
u(0)&=u_0,
\end{aligned}\right.
\end{equation}
where 
\begin{align*}
	\mu_1:=-\frac{1}{2}\sumM \vert e_m\vert^2,\qquad \mu_2:=-\frac{1}{2}\sumM B_m^*B_m.
\end{align*}
Since we look for mild solutions of \eqref{ProblemStratonovichBasics}, we reformulate the equation in the form
		\begin{align}\label{mildEquation1}
		u(t)=&  U(t)u_0+ \int_0^{t}  U(t-s)\left[-\im \lambda\vert u(s)\vert^{\alpha-1} u(s)+\mu_1\left(\vert u(s)\vert^{2(\gamma-1)}u(s)\right)+\mu_2(u(s))\right] \df s\nonumber\\&\hspace{2cm}- \im \int_0^{t} 
		U(t-s)\left[B_1\left(\vert u(s)\vert^{\gamma-1}u(s)\right)+B_2 u(s)\right] \df W(s)
		\end{align}
In the following two Propositions, we introduce the main tool to apply a fixed argument to solve \eqref{mildEquation1}, namely the Strichartz estimates. 

\begin{Prop}[Deterministic Strichartz Estimates]\label{DetStrichartz}
	Let $r_j,q_j\in [2,\infty],$ $j=1,2,$ with
	\begin{align*}
	\frac{2}{q_j}+\frac{d}{r_j}=\frac{d}{2},\qquad (q_j,r_j,d)\neq(2,\infty,2).
	\end{align*} 
	Let $x\in \Lzwei,$ $J\subset \R$ an interval with $0\in J$ and $f\in L^{q_2'}(J,L^{r_2'}(\Rd)).$ Then, there is a constant $C>0$ independent of $J,f$ and $x$ such that
	\begin{enumerate}
		\item[a)] $\Vert U(\cdot) x\Vert_{L^{q_1}(J,L^{r_1})} \le C \Vert x\Vert_{L^2},$
		\item[b)] $ \Vert\int_0^\cdot U(\cdot-s)f(s)\df s\Vert_{L^{q_1}(J,L^{r_1})}\le C \Vert f\Vert_{L^{q_2'}(J,L^{r_2'})}.$
	\end{enumerate}
	Furthermore, $U(\cdot) x$ and $\int_0^\cdot U(\cdot-s)f(s)\df s$ are elements of $C_b(J,{L^2}(\Rd))$ and we have
	\begin{enumerate}
		\item[c)] $\Vert U(\cdot) x\Vert_{C_b(J,{L^2})} \le C \Vert x\Vert_{L^2},$
		\item[d)] $ \Vert\int_0^\cdot U(\cdot-s)f(s)\df s\Vert_{C_b(J,{L^2})}\le C \Vert f\Vert_{L^{q_2'}(J,L^{r_2'})}.$
	\end{enumerate}
\end{Prop}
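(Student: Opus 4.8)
The plan is to reduce everything to two basic properties of the Schr\"odinger group $\left(U(t)\right)_{t\in\R}$: the unitarity on $\Lzwei$ (the energy estimate) $\norm{U(t)x}_{L^2}=\norm{x}_{L^2}$, which is part of Assumption \ref{stochasticAssumptions} i), and the pointwise-in-time dispersive estimate $\norm{U(t)x}_{\LInfty}\le C\vert t\vert^{-d/2}\norm{x}_{L^1}$ for $t\neq 0$, which follows from the explicit Gaussian convolution kernel of $e^{\im t\Delta}$. Riesz--Thorin interpolation between these two bounds yields the decay estimate $\norm{U(t)}_{L^{r'}\to L^{r}}\le C\vert t\vert^{-d\left(\frac{1}{2}-\frac{1}{r}\right)}=C\vert t\vert^{-2/q}$ for every admissible pair $(q,r)$, the last equality using the admissibility relation $\frac{2}{q}+\frac{d}{r}=\frac{d}{2}$. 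These are exactly the hypotheses of the abstract Strichartz machinery of Keel and Tao, so the cleanest route is to verify them and then quote that machinery; below I sketch the self-contained argument away from the endpoint and indicate where the endpoint needs the harder input.

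For part a) away from the endpoint (i.e. $q_1>2$) I would use the $TT^{*}$ method. Writing $Tx:=U(\cdot)x$, the estimate a) is equivalent to the boundedness of $TT^{*}\colon L^{q_1'}(J,L^{r_1'})\to L^{q_1}(J,L^{r_1})$, where $(TT^{*}f)(t)=\int_J U(t-s)f(s)\,\df s$. Applying the decay estimate pointwise gives $\norm{(TT^{*}f)(t)}_{L^{r_1}}\le C\int_J \vert t-s\vert^{-2/q_1}\norm{f(s)}_{L^{r_1'}}\,\df s$, and the one-dimensional Hardy--Littlewood--Sobolev inequality bounds the right-hand side in $L^{q_1}(J)$ by $C\norm{f}_{L^{q_1'}(J,L^{r_1'})}$: the required scaling relation between $q_1'$ and $q_1$ holds automatically, and the substantive hypothesis $0<\frac{2}{q_1}<1$ with $1<q_1'<q_1<\infty$ is precisely the off-endpoint assumption $q_1>2$. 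Part b) follows from the same circle of ideas: dualizing a) yields the $T^{*}$-bound $\norm{\int_J U(-s)g(s)\,\df s}_{L^2}\le C\norm{g}_{L^{q_2'}(J,L^{r_2'})}$, composing with $T$ gives a non-retarded estimate, and the causal truncation $\{s<t\}$ is removed via the Christ--Kiselev lemma, which is available since $q_1>q_2'$ off the endpoint. The mixed-exponent bookkeeping for arbitrary admissible pairs $(q_1,r_1)\neq(q_2,r_2)$, together with the endpoint, is again covered by the Keel--Tao inhomogeneous theorem.

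For c), $U(\cdot)x\in C_b(J,\Lzwei)$ is immediate, since $U$ is a $C_0$-group of unitaries: $t\mapsto U(t)x$ is continuous and $\norm{U(t)x}_{L^2}=\norm{x}_{L^2}$, giving the stated bound. For d), I would write $\int_0^t U(t-s)f(s)\,\df s=U(t)\int_0^t U(-s)f(s)\,\df s$, use the unitarity of $U(t)$, and apply the dual homogeneous estimate above to $g=f\mathbf{1}_{[0,t]}$, obtaining the bound $\norm{\int_0^\cdot U(\cdot-s)f(s)\,\df s}_{C_b(J,L^2)}\le C\norm{f}_{L^{q_2'}(J,L^{r_2'})}$ uniformly in $t$. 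Continuity in $t$ then follows by splitting, for $t'>t$, the difference as $\int_t^{t'}U(t'-s)f(s)\,\df s+\left(U(t'-t)-\mathrm{Id}\right)\int_0^t U(t-s)f(s)\,\df s$: the first term tends to $0$ in $\Lzwei$ by the Strichartz bound on the shrinking interval (absolute continuity of the integral), the second by strong continuity of the group.

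The main obstacle is the endpoint pair $q_1=2$, which forces $r_1=\frac{2d}{d-2}$ with $d\ge 3$. There the kernel $\vert t-s\vert^{-1}$ is non-integrable, the Hardy--Littlewood--Sobolev step fails, and the Christ--Kiselev lemma no longer applies, so the elementary $TT^{*}$ scheme breaks down. This case genuinely requires the dyadic decomposition and bilinear interpolation argument of Keel--Tao; since the statement only excludes the truly forbidden pair $(q,r,d)=(2,\infty,2)$, I would simply invoke their theorem---whose hypotheses are exactly the energy and dispersive estimates verified above---to cover the endpoint and, in fact, all admissible pairs simultaneously.
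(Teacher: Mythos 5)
Your proposal is correct, and it coincides in substance with the paper's proof, which consists solely of the citation ``see \cite{Cazenave}, Theorem 2.3.3'' --- the proof behind that reference is exactly the scheme you sketch: the dispersive estimate from the explicit kernel, Riesz--Thorin interpolation with unitarity, the $TT^{*}$ argument with Hardy--Littlewood--Sobolev for $q>2$, Christ--Kiselev for the retarded mixed-pair estimate, and Keel--Tao for the endpoint $q=2$, $r=\frac{2d}{d-2}$, $d\ge 3$. Your handling of c) and d) via strong continuity, unitarity, and the dual homogeneous estimate is likewise the standard route and contains no gaps.
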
	

\begin{proof}
	These estimates are well known, see for example \cite{Cazenave}, Theorem 2.3.3.
\end{proof}

The estimates from Proposition \ref{DetStrichartz} can be used to deal with the free evolution and the deterministic convolution in \eqref{mildEquation1}. Furthermore, we need an estimate of the stochastic convolution. 
In order to apply Banach's fixed point Theorem iteratively, we have to deal with initial times $T_0\ge 0.$
We denote the shifted filtration $\left(\F_{t+T_0}\right)_{t\ge 0}$ by $\Filtration^{T_0}.$ 
The process given by 
\begin{align*}
W^{T_0}(t):=W(T_0+t)-W(T_0),\qquad t\ge 0,
\end{align*}
is a cylindrical Wiener process with respect to $\Filtration^{T_0}.$
For $T_1>0$ and a $\Filtration^{T_0}$-predictable process $\varPhi\in L^r(\Omega, L^2(0,T_1;\HS(Y,\Lzwei))),$ we define 
\begin{align}\label{DefStochasticConvolution}
J_{[0,T_1]}^{T_0} \varPhi (t):=\int_{0}^{t} U(t-s) \varPhi(s) \df W^{T_0}(s),\qquad t\in [0,T_1],
\end{align}
by the stochastic integration theory in the Hilbert space $\Lzwei,$ see \cite{daPrato}, chapter 4. 
 Note that for an $\Filtration$-predictable process $\varPhi,$ we have
 \begin{align}\label{shiftStochIntegral}
 \int_0^t U(t-s)\varPhi(T_0+s)\df W^{T_0}(s)=\int_{T_0}^{T_0+t}U((T_0+t)-s)\varPhi(s)\df W(s)
 \end{align}
 almost surely for all $t.$
Since we are also interested in Strichartz estimates, we need a definition of the right hand side of \eqref{DefStochasticConvolution} in $L^q(0,T;L^p(\Rd))$-spaces for $q,p>2.$
This can be done by the theory of stochastic integration in martingale type 2 spaces, see \cite{BrzezniakConvolutions} and the references therein or in UMD spaces, see \cite{UMDStochIntegration}.
The tool to estimate the stochastic convolution \eqref{DefStochasticConvolution} is  the following result due to Brze\'zniak and Millet, \cite{StochStrichartz}.
\begin{Prop}[Stochastic Strichartz Estimates]\label{StochStrichartz}
	Let $T_1>0,$ $p\in(1,\infty)$ and $q,r\in [2,\infty]$  with
	\begin{align*}
	\frac{2}{q}+\frac{d}{r}=\frac{d}{2},\qquad (q,r,d)\neq(2,\infty,2).
	\end{align*}
	For all $\Filtration^{T_0}$-predictable processes $\varPhi\in L^p(\Omega, L^2(0,T_1;\HS(Y,\Lzwei))),$  $J_{[0,T_1]}^{T_0}\varPhi$ is continuous in $\Lzwei$ and $\Filtration^{T_0}$-adapted
	with
	\begin{align*}
	\norm{J_{[0,T_1]}^{T_0}\varPhi}_{L^p(\Omega, L^q(0,T_1,L^r)}\lesssim \norm{\varPhi}_{L^p(\Omega, L^2(0,T_1;\HS(Y,L^2)))}
	\end{align*}
	and
	\begin{align*}
	\norm{J_{[0,T_1]}^{T_0}\varPhi}_{L^p(\Omega, C([0,T_1],L^2)}\lesssim \norm{\varPhi}_{L^p(\Omega, L^2(0,T_1;\HS(Y,L^2)))}.
	\end{align*}
\end{Prop}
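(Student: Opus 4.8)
The plan is to follow the factorization method of Da Prato, Kwapie\'n and Zabczyk, which converts the stochastic convolution into a deterministic one so that the deterministic estimates of Proposition~\ref{DetStrichartz} become applicable pathwise. Since $W^{T_0}$ is a cylindrical Wiener process with respect to $\Filtration^{T_0}$, the shift identity \eqref{shiftStochIntegral} shows that nothing depends on $T_0$; I would therefore fix $T_0$, suppress it from the notation and prove everything for the convolution \eqref{DefStochasticConvolution}. The $\Filtration^{T_0}$-adaptedness of $J_{[0,T_1]}^{T_0}\varPhi$ is immediate from the construction of the It\^o integral of a predictable integrand, and the two bounds then guarantee the asserted continuity and furnish a continuous modification.

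The continuity statement in $C([0,T_1],\Lzwei)$ I would prove directly, without factorization. Because $U$ is a commuting group of unitaries on $\Lzwei$, one may pull $U(t)$ out of the stochastic integral and write $J_{[0,T_1]}^{T_0}\varPhi(t)=U(t)M(t)$ with $M(t):=\int_0^t U(-s)\varPhi(s)\,\df W^{T_0}(s)$. The integrand $U(-s)\varPhi(s)$ is predictable and, by unitarity, $\norm{U(-s)\varPhi(s)}_{\HS(Y,\Lzwei)}=\norm{\varPhi(s)}_{\HS(Y,\Lzwei)}$, so $M$ is a continuous $\Lzwei$-valued martingale to which the Hilbert-space Burkholder--Davis--Gundy (Doob) inequality applies:
\[
\E\,\sup_{t\in[0,T_1]}\norm{M(t)}_{L^2}^p\lesssim \E\Big(\int_0^{T_1}\norm{\varPhi(s)}_{\HS(Y,\Lzwei)}^2\,\df s\Big)^{p/2}.
\]
Since $U$ is strongly continuous and unitary, $t\mapsto U(t)M(t)$ is continuous in $\Lzwei$ and $\sup_t\norm{U(t)M(t)}_{L^2}=\sup_t\norm{M(t)}_{L^2}$, which yields the $C([0,T_1],\Lzwei)$ bound.

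For the Strichartz bound I would use the factorization. Fix $\alpha\in(0,\tfrac12)$ with additionally $\alpha<\tfrac1p$, and set $Y_\alpha(s):=\int_0^s(s-\sigma)^{-\alpha}U(s-\sigma)\varPhi(\sigma)\,\df W^{T_0}(\sigma)$; the group law together with the stochastic Fubini theorem and the beta-integral identity $\int_\sigma^t(t-s)^{\alpha-1}(s-\sigma)^{-\alpha}\,\df s=\tfrac{\pi}{\sin(\pi\alpha)}$ gives the representation $J_{[0,T_1]}^{T_0}\varPhi(t)=c_\alpha\int_0^t(t-s)^{\alpha-1}U(t-s)Y_\alpha(s)\,\df s=:c_\alpha R_\alpha Y_\alpha(t)$. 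Applying the Hilbert-space Burkholder inequality pointwise in $s$ and using unitarity of $U$ on $\Lzwei$, together with Young's inequality in time (whose range of validity is exactly what forces $2\alpha<1$ and $\alpha<\tfrac1p$), I would obtain $\norm{Y_\alpha}_{L^p(\Omega,L^p(0,T_1;L^2))}\lesssim\norm{\varPhi}_{L^p(\Omega,L^2(0,T_1;\HS(Y,\Lzwei)))}$. It then remains to prove the purely deterministic mapping property that $R_\alpha$ sends $L^p(0,T_1;\Lzwei)$ into $L^q(0,T_1;L^r)$, after which chaining the two bounds gives the claim.

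This deterministic estimate for $R_\alpha$ is where I expect the real difficulty to lie. For $r=2$ the propagator $U(t-s)$ is unitary and $R_\alpha$ is a scalar Riemann--Liouville fractional integral, bounded on the appropriate $L^p_t\to L^q_t$ scale by Hardy--Littlewood--Sobolev; but for $r>2$ the generalized Minkowski inequality destroys the dispersive gain (it would require an $L^{r'}_x$ input, whereas $Y_\alpha$ only lives in $L^2_x$), so one must retain the oscillatory structure and combine the inhomogeneous estimate of Proposition~\ref{DetStrichartz} with the singular kernel $(t-s)^{\alpha-1}$ into a single ``fractionally integrated'' Strichartz inequality. The delicate point is to verify that, for every admissible pair $(q,r)$ and every $p\in(1,\infty)$, the exponent $\alpha$ can be chosen in a nonempty window --- small enough ($\alpha<\tfrac1p$) for the inner stochastic integral to converge with the right moment, yet large enough for the time-fractional integration to carry $L^p_t$ into $L^q_t$ --- all while respecting the endpoint exclusion $(q,r,d)\neq(2,\infty,2)$. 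Establishing this fractional Strichartz estimate and checking that the window is nonempty is, in my view, the technical heart of the proposition.
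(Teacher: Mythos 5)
Your reduction of the $C([0,T_1],\Lzwei)$ bound to the martingale $M(t)=\int_0^t U(-s)\varPhi(s)\,\df W^{T_0}(s)$ via $J^{T_0}_{[0,T_1]}\varPhi(t)=U(t)M(t)$ is correct and is in substance what the paper's cited proof uses (BDG in the Hilbert space $X=\Lzwei$). The factorization route for the Strichartz bound, however, contains an unresolvable exponent clash, precisely at the point you flag as the ``technical heart.'' Under the proposition's hypothesis $\varPhi\in L^p(\Omega,L^2(0,T_1;\HS(Y,\Lzwei)))$ --- only $L^2$ in time --- your Young-inequality step genuinely forces $\alpha<\tfrac1p$: you must convolve $k(s)=s^{-2\alpha}$ with $h=\norm{\varPhi(\cdot)}_{\HS}^2\in L^1(0,T_1)$ and land in $L^{p/2}$, which requires $k\in L^{p/2}$, i.e.\ $\alpha p<1$; if $\alpha\ge\tfrac1p$, concentrating $h$ near a point shows $k*h\approx k\notin L^{p/2}$, so no choice of constants saves it. But the deterministic mapping $R_\alpha:L^p(0,T_1;\Lzwei)\to L^q(0,T_1;L^r)$ requires the opposite inequality $\alpha\ge\tfrac1p$. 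Already for the admissible pair $(q,r)=(\infty,2)$, H\"older in time demands $(\alpha-1)p'>-1$, i.e.\ $\alpha>\tfrac1p$; and for a general admissible pair the parabolic rescaling $Y_\lambda(s,x):=Y(\lambda^{-2}s,\lambda^{-1}x)$, which maps data supported in $[0,1]$ to data supported in $[0,\lambda^2]\subset[0,T_1]$, gives $R_\alpha Y_\lambda(t,x)=\lambda^{2\alpha}(R_\alpha Y)(\lambda^{-2}t,\lambda^{-1}x)$, so boundedness uniformly as $\lambda\to0$ forces $2\alpha+\tfrac2q+\tfrac dr\ge\tfrac2p+\tfrac d2$, which under admissibility $\tfrac2q+\tfrac dr=\tfrac d2$ is exactly $\alpha\ge\tfrac1p$. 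The window you hope to choose $\alpha$ from is therefore empty for every $p\in(1,\infty)$ and every admissible pair; this failure is on any time interval, not just globally, since the rescaled counterexamples live on arbitrarily short intervals. The classical Da Prato--Kwapie\'n--Zabczyk factorization escapes this only by assuming $\varPhi\in L^p$ in time (so that Young permits $\alpha<\tfrac12$ rather than $\alpha<\tfrac1p$) together with $p>2$ (so that $(\tfrac1p,\tfrac12)$ is nonempty) --- both strictly stronger than the proposition's hypotheses, and the $L^2$-in-time gain is exactly what the paper needs downstream to handle the nonlinear noise.

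The paper takes a genuinely different route, and your proposal essentially reconstructs the older de Bouard--Debussche approach whose limitations (the extra integrability assumption \eqref{deBouardAssumption} and the restricted range of $\alpha$) the paper explicitly sets out to avoid. The cited proof of Brze\'zniak--Millet involves no singular kernel at all: one regards the entire convolution as a \emph{single} stochastic integral in the Banach space $X=L^q(0,T_1;L^r)$, which is of martingale type $2$ since $q,r\ge2$. Concretely, the modified integrand $\tilde\varPhi(s)y:=\mathbf{1}_{(s,T_1]}(\cdot)\,U(\cdot-s)(\varPhi(s)y)$ satisfies $\norm{\tilde\varPhi(s)}_{\gamma(Y,X)}\lesssim\norm{\varPhi(s)}_{\HS(Y,\Lzwei)}$ as a direct consequence of the \emph{homogeneous} deterministic Strichartz estimate (Proposition \ref{DetStrichartz}~a)), and then the Burkholder--Davis--Gundy inequality in $X$ (valid for every $p\in(1,\infty)$, \cite{BrzezniakConvolutions}, Theorem 2.4) yields the claimed bound in one stroke --- this is also why the paper can note that the restriction $q=p$ appearing in \cite{StochStrichartz} is immaterial. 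If you want a self-contained proof, that is the argument to reproduce; the fractional-integration scheme cannot be repaired under the stated hypotheses.
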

\begin{proof}
	See \cite{StochStrichartz}, Theorem 3.10, Proposition 3.12 and Corollary 3.13 for the statement in the case $q=p.$ The proof is based on the Burkholder-Davis-Gundy inequality 
	\begin{align*}
		\E \sup_{t\in [0,T_1]} \bigNorm{\int_0^t \varPhi(s) \df W^{T_0}(s)}_X^p
		\lesssim \E \left(\int_0^{T_1}\norm{\varphi(s)}_X^2 \df s\right)^{\frac{p}{2}}
	\end{align*}
	for $X=\Lzwei$ and $X=L^q(0,T_1;L^r(\Rd)),$ which holds for arbitrary $p\in(1,\infty).$ Therefore, $q=p$ is not needed. 
	For the BDG-inequality in martingale type 2 spaces we refer to \cite{BrzezniakConvolutions}, Theorem 2.4.  
\end{proof}
Next, we introduce the Banach spaces for the fixed point argument depending on the powers $\alpha$ and $\gamma.$  
For $\alpha\in \left(1,1+\frac{4}{d}\right]$ and $\gamma \in \left(1,1+\frac{2}{d}\right],$ we fix  $q,\tilde{q}\in (2,\infty)$ such that 
\begin{align}\label{StrichartzAlphaQ}
\frac{2}{q}+\frac{d}{\alpha+1}=\frac{d}{2},\qquad 
\frac{2}{\tilde{q}}+\frac{d}{2\gamma}=\frac{d}{2} 
\end{align} 
in order to apply the Strichartz estimates from Propositions \ref{DetStrichartz} and \ref{StochStrichartz} with the exponent pairs $(\alpha+1,q)$ and $(2\gamma,\tilde{q} ).$ Moreover, we set 
\begin{align*}
	Y_{[a,b]}:=\left\{
	\begin{aligned}
	&L^q(a,b;\LalphaPlusEins), \hspace{1.5cm} \alpha+1\ge 2\gamma, \\
	&L^{\tilde{q}}(a,b;L^{2\gamma}(\Rd)),\hspace{1.75cm} \alpha+1< 2\gamma,
	\end{aligned}\right.
\end{align*}
and
\begin{align*}
 E_{[a,b]}:=Y_{[a,b]} \cap C([a,b],\Lzwei),\qquad 0\le a\le b\le T.
\end{align*}
We remark that in the critical case with $\alpha=1+\frac{4}{d}$ or $\gamma=1+\frac{2}{d},$ the Strichartz exponents for time and space coincide and we get $Y_{[a,b]}=L^{2+\frac{4}{d}}(a,b;L^{2+\frac{4}{d}}(\Rd)).$
The relationship between the spaces from above is clarified by the following interpolation Lemma.
%
\begin{Lemma}\label{interpolation}
	We have
	\begin{align*}
	E_{[a,b]}\hookrightarrow L^{q}(a,b;L^{\alpha+1}(\Rd)) \cap L^{\tilde{q}}(a,b;L^{2\gamma}(\Rd)).
	\end{align*}
\end{Lemma}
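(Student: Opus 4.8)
The plan is to treat the two cases in the definition of $Y_{[a,b]}$ together by a single observation: one of the two target integrability exponents coincides with $Y_{[a,b]}$, so that part of the embedding is trivial, while the other is recovered from $Y_{[a,b]}$ and the $C([a,b],\Lzwei)$-control by spatial interpolation combined with the Strichartz scaling identities. First I would fix $u\in E_{[a,b]}$ and consider the case $\alpha+1\ge 2\gamma$ (the case $\alpha+1<2\gamma$ being completely symmetric, with the roles of $(\alpha+1,q)$ and $(2\gamma,\tilde q)$ interchanged). Here $Y_{[a,b]}=L^q(a,b;\LalphaPlusEins)$, so only $\norm{u}_{L^{\tilde q}(a,b;L^{2\gamma})}$ remains to be bounded. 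Since $2<2\gamma\le\alpha+1$, Hölder's inequality gives the pointwise interpolation
\[
\norm{u(t)}_{L^{2\gamma}}\le \norm{u(t)}_{\Lzwei}^{\theta}\,\norm{u(t)}_{\LalphaPlusEins}^{1-\theta},
\]
where $\theta\in(0,1)$ is determined by $\frac{1}{2\gamma}=\frac{\theta}{2}+\frac{1-\theta}{\alpha+1}$. Raising to the power $\tilde q$, estimating the $L^2$-factor by $\norm{u}_{C([a,b],\Lzwei)}$ uniformly in $t$, and integrating, I obtain
\[
\norm{u}_{L^{\tilde q}(a,b;L^{2\gamma})}^{\tilde q}\le \norm{u}_{C([a,b],\Lzwei)}^{\theta\tilde q}\int_a^b \norm{u(t)}_{\LalphaPlusEins}^{(1-\theta)\tilde q}\,\df t.
\]

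The decisive step is then to check that the exponents match, i.e. that $(1-\theta)\tilde q=q$, so that the remaining time integral is exactly $\norm{u}_{L^q(a,b;\LalphaPlusEins)}^q$. This is where the Strichartz admissibility of both pairs enters: solving the interpolation identity for $\theta$ and inserting the scaling relations $\frac{2}{q}+\frac{d}{\alpha+1}=\frac{d}{2}$ and $\frac{2}{\tilde q}+\frac{d}{2\gamma}=\frac{d}{2}$ yields, after a short computation, $\frac{1-\theta}{q}=\frac{1}{\tilde q}$, which is precisely the required identity. I expect this algebraic verification to be the only real content of the lemma; everything else is bookkeeping. Taking the $\tilde q$-th root then gives $\norm{u}_{L^{\tilde q}(a,b;L^{2\gamma})}\le \norm{u}_{C([a,b],\Lzwei)}^{\theta}\,\norm{u}_{L^q(a,b;\LalphaPlusEins)}^{1-\theta}$, and Young's inequality in the form $x^\theta y^{1-\theta}\le x+y$ bounds the right-hand side by $\norm{u}_{C([a,b],\Lzwei)}+\norm{u}_{L^q(a,b;\LalphaPlusEins)}\lesssim \norm{u}_{E_{[a,b]}}$, which is the claimed embedding for this case.

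Finally, in the case $\alpha+1<2\gamma$ I would run the identical argument with $2<\alpha+1<2\gamma$, now interpolating $L^{\alpha+1}$ between $\Lzwei$ and $L^{2\gamma}$; the same scaling computation produces $(1-\theta)q=\tilde q$ and hence $\norm{u}_{L^q(a,b;\LalphaPlusEins)}\lesssim \norm{u}_{E_{[a,b]}}$, while the factor $L^{\tilde q}(a,b;L^{2\gamma})=Y_{[a,b]}$ is controlled trivially. Both cases together give the full embedding. The only points requiring a little care are that $\theta\in(0,1)$ strictly, which is guaranteed because $\gamma\in(1,1+\frac{2}{d}]$ and $\alpha\in(1,1+\frac{4}{d}]$ force all exponents to lie strictly between $2$ and $2+\frac{4}{d}$, and the measurability of $t\mapsto\norm{u(t)}_{L^{2\gamma}}$, which follows from $u\in C([a,b],\Lzwei)\cap Y_{[a,b]}$.
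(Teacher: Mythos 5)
Your proof is correct and follows essentially the same route as the paper: Lyapunov (H\"older) interpolation of $L^{2\gamma}$ between $\Lzwei$ and $\LalphaPlusEins$, with the Strichartz scaling relations forcing the exponent match $(1-\theta)\tilde q=q$ (the paper writes this as $\frac{1}{\tilde q}=\frac{\theta}{q}$ with the complementary convention for $\theta$), and the symmetric case handled by exchanging the roles of the two pairs. One cosmetic quibble: in the boundary case $2\gamma=\alpha+1$ (e.g.\ the critical pair $\alpha=1+\frac{4}{d}$, $\gamma=1+\frac{2}{d}$) your $\theta$ equals $0$ rather than lying strictly in $(0,1)$, but then $L^{\tilde q}(a,b;L^{2\gamma})=L^{q}(a,b;\LalphaPlusEins)$ and the embedding is trivial, so nothing breaks.
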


\begin{proof}
	We treat $\alpha+1\ge 2\gamma.$ The other case can be proved analogously. Since $\gamma\in (1,\frac{\alpha+1}{2}],$ we can take $\theta\in (0,1]$ with
	\begin{align*}
	\frac{1}{2\gamma}=\frac{\theta}{\alpha+1}+\frac{1-\theta}{2}.
	\end{align*}
	By the scaling conditions \eqref{StrichartzAlphaQ}, we also get $\frac{1}{\tilde{q}}=\frac{\theta}{q}.$ Hence 
	\begin{align*}
	\norm{u}_{L^{\tilde{q}}(a,b;L^{2\gamma})}^{\tilde{q}}
	&\le \int_a^b \norm{u(s)}_{L^{\alpha+1}}^{\tilde{q}\theta}\norm{u(s)}_{L^2}^{\tilde{q}(1-\theta)}\df s\nonumber\\
	&\le \norm{u}_{L^\infty(a,b;L^2)}^{\tilde{q}(1-\theta)}\int_a^b \norm{u(s)}_{L^{\alpha+1}}^q \df s
	=\norm{u}_{L^\infty(a,b;L^2)}^{\tilde{q}(1-\theta)}\norm{u}_{L^q(a,b;L^{\alpha+1})}^{\tilde{q}\theta} 
	\le \norm{u}_{E_{[a,b]}}^{\tilde{q}}
	\end{align*}
	for $u\in E_{[a,b]}$ by Lyapunov's inequality and we have
	\begin{align*}
		\norm{u}_{L^{q}(a,b;L^{\alpha+1})}+\norm{u}_{L^{\tilde{q}}(a,b;L^{2\gamma})}\le 2 \norm{u}_{E_{[a,b]}},\qquad u\in E_{[a,b]}.
	\end{align*}
\end{proof}
Furthermore, we abbreviate $Y_r:=Y_{[0,r]}$ and $E_r:=E_{[0,r]}$ for $r>0.$ 
Let $\tau$ be an $\Filtration$-stopping time and $p\in(1,\infty).$ Then, we denote by $\MF{\tau}$ the  space of  processes $u:[0,T]\times \Omega\to \Lzwei\cap L^{2\gamma}(\Rd)$ with continuous paths in $\Lzwei$ which are $\Filtration$-adapted in $\Lzwei$ and $\Filtration$-predictable in $L^{2\gamma}(\Rd)$ such that
\begin{align*}
	\norm{u}_{\MF{\tau}}^p:=\E \left[\sup_{t\in [0,\tau]}\norm{u(t)}_{L^2}^p+\norm{u}_{Y_\tau}^p\right]<\infty.
\end{align*}
Often, we abbreviate $u\in \MFshort{\tau}:=\MF{\tau}.$
Moreover, we say $u\in\MFloc{\tau}$ if $u$ is a continuous $\Filtration$-adapted process in $\Lzwei$  and there is a sequence $\left(\tau_n\right)_{n\in\N}$ of stopping times with $\tau_n \nearrow \tau$ almost surely as $n\to \infty,$ such that $u\in \MF{\tau_n}$ for all $n\in\N.$ 
\begin{Definition}\label{SolutionDef}
	Let   $\alpha\in (1,1+\frac{4}{d}],$  $\gamma\in (1,1+\frac{2}{d}]$ and $p\in (1,\infty).$
	\begin{enumerate}
		\item[a)]
		A \emph{local mild solution} of \eqref{ProblemStratonovichBasics} is a  triple $\left(u,\left(\tau_n\right)_{n\in\N},\tau\right)$ consisting of stopping times $\tau,\tau_n,$ $n\in\N,$ with $\tau_n\nearrow\tau$ almost surely as $n\to \infty,$ and a  process $u\in \MFloc{\tau},$
		such that $u\in \MF{\tau_n}$ and 
		\begin{align}\label{mildEquation}
		u(t)=&  U(t)u_0+ \int_0^{t}  U(t-s)\left[-\im \lambda\vert u(s)\vert^{\alpha-1} u(s)+\mu_1\left(\vert u(s)\vert^{2(\gamma-1)}u(s)\right)+\mu_2(u(s))\right] \df s\nonumber\\&\hspace{2cm}- \im \int_0^{t} 
		U(t-s)\left[B_1\left(\vert u(s)\vert^{\gamma-1}u(s)\right)+B_2 u(s)\right] \df W(s)
		\end{align}
		almost surely on $\left\{t\le \tau_n\right\}$ in $\Lzwei$ for all $n\in\N.$ Often, we shortly denote the local mild solution by $\left(u,\tau\right).$
		\item[b)] Solutions of \eqref{ProblemStratonovich} are called \emph{unique}, if we have 
		\begin{align*}
		\Prob\Big(u_1(t)=u_2(t)\quad \forall t\in [0,\sigma_1\land \sigma_2)\Big)=1
		\end{align*}
		for all local mild solutions $(u_1,\sigma_1)$ and $(u_2,\sigma_2).$
		\item[c)] 
		A local mild solution $\left(u,\tau\right)$ with $\tau=T$ almost surely and $u\in \MF{T}$ is called \emph{global mild solution}.  
	\end{enumerate}
\end{Definition}

\section{Truncated equation, local existence and uniqueness}

This section is devoted to the proof of the local part of Theorem $\ref{mainTheorem}.$ 
 In order to transfer the deterministic fixed point argument, see \cite{Linares}, Theorems 5.2 and 5.3,  to the stochastic setting, we would like to use the Strichartz estimates for the nonlinear terms pathwisely. On the other hand, Proposition \ref{StochStrichartz} only gives us an $L^p(\Omega)$-estimate for the stochastic term at hand. Hence, it is natural to truncate the nonlinearities and look for a mild solution of
 \begin{equation}\label{ProblemStratonovichApprox}
 \left\{
 \begin{aligned}
 \df u_n(t)&= \left(\im \Delta u_n(t)-\im \lambda\varphi_n(u_n,t) \vert u_n(t) \vert^{\alpha-1} u_n(t)-\varphi_n(u_n,t)\mu_1(\vert u_n(t) \vert^{2(\gamma-1)} u_n(t))-\mu_2\left(u_n(t)\right)\right) \df t\\&\qquad-\im\left( \varphi_n(u_n,t) B_1 \left(  \vert u_n(t) \vert^{\gamma-1} u_n(t) \right) +B_2 u_n(t)\right)\df W(t),\\
 u(0)&=u_0.
 \end{aligned}\right.
 \end{equation}	
 in $\MFshort{T}$ for fixed $n\in\N.$ The truncation is given by
 \begin{align}\label{truncation}
  	\varphi_n(u,t):=\theta_n(\norm{u}_{L^{q}(0,t;L^{\alpha+1})}+\norm{u}_{L^{\tilde{q}}(0,t;L^{2 \gamma})}),\qquad
 \end{align}
 with 
 \begin{align*}
 	\theta_n(x):=\left\{
 	\begin{aligned}
 	&1, \hspace{1.8cm}x\in[0,n], \\
 	&2-\frac{x}{n},\hspace{0.95cm} x\in[n,2n],\\
 	&0, \hspace{1.8cm}x\in[2n,\infty).
 	\end{aligned}\right.
 \end{align*}
 In particular, we have 
 \begin{align}\label{cutoffLipschitz}
 \vert \theta_n(x)-\theta_n(y)\vert\le \frac{1}{n}\vert x-y\vert,\qquad x,y \ge 0.
 \end{align}
Before we start with the fixed point argument, we formulate properties of the power-type nonlinearities that appear in \eqref{ProblemStratonovich}.
\begin{Lemma}\label{FrechetNonlinear}
	Let $\left(S,\mathcal{A},\mu\right)$ be a measure space and $1\le \sigma<r<\infty$  Then, the map
		\begin{align*}
		G: L^r(S) \rightarrow L^{\frac{r}{\sigma}}(S), \hspace{0.7cm}G(u):=\vert u\vert^{\sigma-1}u,
		\end{align*}
		is continuously Fr\'{e}chet differentiable with 
		\begin{align*}
		\Vert G'[u]\Vert_{L^r \to L^{\frac{r}{\sigma}}}\lesssim \Vert u \Vert_{L^r}^{\sigma-1},\qquad u\in L^r(S). 
		\end{align*}
		In particular,
		\begin{align}\label{nonlinerityLocalLipschitz}
			\norm{G(u)-G(v)}_{L^\frac{r}{\sigma}}\lesssim \left(\norm{u}_{L^r}+\norm{v}_{L^r}\right)^{\sigma-1}\norm{u-v}_{L^r},\qquad u,v\in L^r(S).
		\end{align}
\end{Lemma}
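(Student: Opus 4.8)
The plan is to recognize $G$ as the superposition (Nemytskii) operator induced by the scalar map $g\colon\C\to\C$, $g(z)=|z|^{\sigma-1}z$, and to reduce every assertion to pointwise estimates on $g$ combined with one application of Hölder's inequality. The case $\sigma=1$ is trivial, since then $G=\mathrm{id}$ is linear and \eqref{nonlinerityLocalLipschitz} is immediate, so I would assume $\sigma>1$ throughout. First I would record the pointwise facts about $g$: viewed on $\R^2\cong\C$ it is continuously differentiable with $g(0)=0$ and $Dg(0)=0$, its real-linear differential obeys $\norm{Dg(z)}\le\sigma|z|^{\sigma-1}$ for all $z$, and $Dg$ is continuous. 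These follow from a direct computation (e.g. writing $g(z)=(z\bar z)^{(\sigma-1)/2}z$ and differentiating), and nothing beyond this pointwise bound and the continuity of $Dg$ will be needed.

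Next I would introduce the candidate derivative as the multiplication-type operator $\Lambda(u)h(s):=Dg(u(s))h(s)$ and establish the stated operator bound. Inserting the pointwise bound into $\int|Dg(u)h|^{r/\sigma}\,\df\mu$ and applying Hölder's inequality with the conjugate exponents $\tfrac{\sigma}{\sigma-1}$ and $\sigma$ (both admissible because $\sigma>1$) separates the integral into $\big(\int|u|^{r}\big)^{(\sigma-1)/\sigma}$ and $\big(\int|h|^{r}\big)^{1/\sigma}$, yielding $\norm{\Lambda(u)h}_{L^{r/\sigma}}\le\sigma\norm{u}_{L^r}^{\sigma-1}\norm{h}_{L^r}$. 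Thus $\Lambda(u)\in\mathcal L(L^r,L^{r/\sigma})$ with the claimed bound $\norm{\Lambda(u)}_{L^r\to L^{r/\sigma}}\lesssim\norm{u}_{L^r}^{\sigma-1}$, which is exactly the asserted derivative bound once $\Lambda(u)=G'[u]$ is confirmed.

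The main obstacle is to verify Fréchet differentiability, i.e. that the remainder $R(u,h):=G(u+h)-G(u)-\Lambda(u)h$ is $o(\norm{h}_{L^r})$. Writing $R$ pointwise via the fundamental theorem of calculus as $R(u,h)(s)=\int_0^1\big[Dg(u(s)+th(s))-Dg(u(s))\big]h(s)\,\df t$ and setting $\omega(s):=\sup_{t\in[0,1]}|Dg(u(s)+th(s))-Dg(u(s))|$, the same Hölder step gives $\norm{R(u,h)}_{L^{r/\sigma}}\le\big(\int\omega^{r/(\sigma-1)}\,\df\mu\big)^{(\sigma-1)/r}\norm{h}_{L^r}$, so it remains to show the prefactor tends to $0$ as $\norm{h}_{L^r}\to0$. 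Here I would use the subsequence principle: along any sequence $h_k\to0$ in $L^r$ one extracts a subsequence converging a.e. and dominated by a fixed $H\in L^r$; then $\omega\to0$ a.e. by continuity of $Dg$, while $\omega^{r/(\sigma-1)}\le C(|u|+H)^{r}\in L^1$ by the pointwise bound on $Dg$, so dominated convergence forces the integral to $0$, and since every subsequence admits such a further subsequence the full limit is $0$. This dominated-convergence bookkeeping is the delicate point; the continuity of $u\mapsto G'[u]$ then follows by the identical argument applied to $\int|Dg(u)-Dg(v)|^{r/(\sigma-1)}\,\df\mu$ as $v\to u$ in $L^r$.

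Finally, the ``in particular'' estimate \eqref{nonlinerityLocalLipschitz} is obtained from the mean value representation $G(u)-G(v)=\int_0^1 G'[v+t(u-v)](u-v)\,\df t$, valid since $G$ is $C^1$. The derivative bound gives $\norm{G(u)-G(v)}_{L^{r/\sigma}}\le\int_0^1\norm{v+t(u-v)}_{L^r}^{\sigma-1}\,\df t\;\norm{u-v}_{L^r}$, and estimating $\norm{v+t(u-v)}_{L^r}\le\norm{u}_{L^r}+\norm{v}_{L^r}$ uniformly in $t$ produces the factor $\big(\norm{u}_{L^r}+\norm{v}_{L^r}\big)^{\sigma-1}$, completing the proof.
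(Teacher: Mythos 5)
Your proof is correct. The paper itself gives no argument for this lemma, merely citing \cite{ISEM}, Lemma 9.2, and your Nemytskii-operator argument --- the pointwise bound $\norm{Dg(z)}\le\sigma\vert z\vert^{\sigma-1}$ with $Dg$ continuous and $Dg(0)=0$, H\"older with exponents $\tfrac{\sigma}{\sigma-1}$ and $\sigma$, dominated convergence along a.e.-convergent dominated subsequences for both the remainder estimate and the continuity of $u\mapsto G'[u]$, and the mean value inequality for \eqref{nonlinerityLocalLipschitz} --- is precisely the standard proof that such a reference contains, with the $\sigma=1$ case correctly dispatched as trivial. The only step worth making explicit is the measurability of $\omega(s)=\sup_{t\in[0,1]}\vert Dg(u(s)+th(s))-Dg(u(s))\vert$, which follows since continuity of $Dg$ allows one to restrict the supremum to rational $t$; this is a routine remark, not a gap.
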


\begin{proof}
	This Lemma is well known, see for example the lecture notes \cite{ISEM}, Lemma 9.2.
\end{proof}

To simplify the presentation, we use the following abbreviations for $r>0$ and $t\in [0,T]:$
\begin{align}\label{ConvolutionNonlinear}
	K_{det}^n u(t):=&-\im \lambda\int_0^t U(t-s)\left[\varphi_n(u,s) \vert u(s)\vert^{\alpha-1} u(s)\right]\df s,
\end{align}
\begin{align}\label{ConvolutionStrat}
K_{Strat}^n u(t):=&\int_0^t U(t-s)\left[\mu_1\left(\varphi_n(u,s)\vert u(s) \vert^{2(\gamma-1)} u(s)\right)+\mu_2\left( u(s)\right)\right]\df s,
\end{align}
\begin{align}\label{ConvolutionStoch}
	 K_{stoch}^n u(t):=&-\im\int_0^t U(t-s)\left[  B_1 \left( \varphi_n(u,s) \vert u(s) \vert^{\gamma-1} u(s) \right)+B_2 u(s)\right] \df W(s). 
\end{align}
Before we start with the proof of the local existence and uniqueness result in the subcritical case, we introduce our notion of a solution of \eqref{ProblemStratonovichApprox}.

\begin{Definition}
	Let $\alpha\in (1,1+\frac{4}{d}],$ $\gamma \in (1,1+\frac{2}{d}]$ and $p\in (1,\infty).$
	\begin{enumerate}
		\item[a)]
		A \emph{local mild solution} of \eqref{ProblemStratonovichApprox} is a  pair $\left(u^n,\tau^n\right)$ consisting of 
a stopping time $\tau^n\in [0,T]$ and a process $u^n\in \MFshort{\tau^n},$
		such that
		the equation
	\begin{align}\label{equationXn}
	u^n=U(\cdot)u_0+K_{det}^n u^n+ K_{Strat}^n u^n+K_{stoch}^n u^n
	\end{align}
		holds almost surely on $\left\{t\le \tau^n\right\}.$ 
		\item[b)] Solutions of \eqref{ProblemStratonovichApprox} are called \emph{unique}, if we have 
		\begin{align*}
		\Prob\left(u_1^n(t)=u_2^n(t)\quad \forall t\in [0,\sigma^n\land \tau^n)\right)=1
		\end{align*}
		for all local mild solutions $(u_1^n,\sigma^n)$ and $(u_2^n,\tau^n).$
		\item[c)] 
		A local mild solution $\left(u^n,\tau^n\right)$ with $\tau^n=T$ almost surely is called \emph{global mild solution}.
	\end{enumerate}
\end{Definition}

In the following Proposition, we state existence and uniqueness for \eqref{ProblemStratonovichApprox} in the subcritical case.


\begin{Prop}\label{cutoffEquationLzwei}
	Let $\alpha\in (1,1+\frac{4}{d}),$ $\gamma \in (1,1+\frac{2}{d})$ and $p\in(1,\infty).$  Then, there is a unique global mild solution $\left(u^n,T\right)$ of \eqref{ProblemStratonovichApprox}.
\end{Prop}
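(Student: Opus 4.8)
The plan is to solve the fixed point equation \eqref{equationXn} by Banach's fixed point theorem in the space $\MFshort{T_1}$ for a suitably small time $T_1\in(0,T]$, and then to exhaust $[0,T]$ by finitely many restarts. Accordingly I introduce the map
\[
\mathcal{T}(u):=U(\cdot)u_0+K_{det}^n u+K_{Strat}^n u+K_{stoch}^n u
\]
on $\MFshort{T_1}$. By Lemma \ref{interpolation} the $E_{T_1}$-norm controls both $L^q(0,T_1;L^{\alpha+1})$ and $L^{\tilde q}(0,T_1;L^{2\gamma})$, so throughout it suffices to estimate each convolution in the respective Strichartz space together with $C([0,T_1],\Lzwei)$. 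The choice of $T_1$ will depend only on $n$ and the fixed data (the Strichartz constants and the noise bounds from Assumption \ref{stochasticAssumptions}), so that the very same argument can be reused on every subinterval.

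First I would verify that $\mathcal{T}$ maps $\MFshort{T_1}$ into itself. The free term lies in $E_{T_1}$ by Proposition \ref{DetStrichartz} a), c). The deterministic convolutions $K_{det}^n u$ and $K_{Strat}^n u$ are handled pathwise by Proposition \ref{DetStrichartz} b), d): using $\||u(s)|^{\alpha-1}u(s)\|_{L^{(\alpha+1)'}}=\|u(s)\|_{L^{\alpha+1}}^{\alpha}$, the boundedness of $\mu_1,\mu_2$, and the fact that $\varphi_n(u,s)\neq 0$ forces $\|u\|_{Y_{[0,s]}}\le 2n$, the cutoff turns these into \emph{deterministic} bounds of the integrands. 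The stochastic convolution $K_{stoch}^n u$ is estimated by Proposition \ref{StochStrichartz} with the pair $(2\gamma,\tilde q)$, bounding the Hilbert--Schmidt norm of $B_1(\varphi_n|u|^{\gamma-1}u)(s)+B_2u(s)$ by $\lesssim\|u(s)\|_{L^{2\gamma}}^{\gamma}+\|u(s)\|_{\Lzwei}$ via Assumption \ref{stochasticAssumptions} iii). Adaptedness, predictability and the continuity of paths in $\Lzwei$ follow from the corresponding assertions in both propositions.

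The decisive step is the contraction estimate
\[
\|\mathcal{T}u-\mathcal{T}v\|_{\MFshort{T_1}}\le C(n)\,\omega(T_1)\,\|u-v\|_{\MFshort{T_1}},\qquad \omega(T_1)\to 0\ \text{as}\ T_1\to 0 .
\]
For the nonlinear differences I would use the splitting
\[
\varphi_n(u,s)|u|^{\alpha-1}u-\varphi_n(v,s)|v|^{\alpha-1}v=\varphi_n(u,s)\big(|u|^{\alpha-1}u-|v|^{\alpha-1}v\big)+\big(\varphi_n(u,s)-\varphi_n(v,s)\big)|v|^{\alpha-1}v,
\]
bounding the first summand by \eqref{nonlinerityLocalLipschitz} together with $\|u\|_{Y_{[0,s]}}\le 2n$ on $\{\varphi_n(u,s)\neq 0\}$, and the second by the Lipschitz bound \eqref{cutoffLipschitz} together with $\|v\|_{Y_{[0,s]}}\le 2n$ on $\{\varphi_n(v,s)\neq 0\}$; the $\gamma$-terms in $K_{Strat}^n$ and $K_{stoch}^n$ are treated by the analogous splittings. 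The gain $\omega(T_1)=T_1^{\theta}$ with $\theta>0$ comes from Hölder's inequality in time: the integrands control $L^{\alpha q'}$ resp.\ $L^{2\gamma}$ in time, whereas the solution norm controls $L^{q}$ resp.\ $L^{\tilde q}$, and the \emph{strict} subcriticality $\alpha<1+\tfrac{4}{d}$, $\gamma<1+\tfrac{2}{d}$ yields $\alpha q'<q$ and $2\gamma<\tilde q$, hence a positive power of $T_1$. I expect this interplay of the cutoff-difference bound with the time gain to be the main obstacle, as one must keep $C(n)$ independent of $u,v$ and of the left endpoint of the interval.

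Choosing $T_1=T_1(n)$ so small that $C(n)\,\omega(T_1)\le\tfrac12$ --- a choice independent of $u_0$ and of the starting time --- Banach's theorem produces a unique fixed point $u^n\in\MFshort{T_1}$, i.e.\ a unique mild solution on $[0,T_1]$. To globalize I would partition $[0,T]$ into $\lceil T/T_1\rceil$ intervals of length $T_1$ and solve successively on $[kT_1,(k+1)T_1]$ with the shifted filtration $\Filtration^{kT_1}$, the shifted Wiener process $W^{kT_1}$ and initial datum $u^n(kT_1)\in L^p(\Omega,\F_{kT_1},\Lzwei)$, rewriting \eqref{equationXn} as a restarted mild equation by means of \eqref{shiftStochIntegral} and the group law $U(t)=U(t-kT_1)U(kT_1)$. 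The only additional point is that the truncation measures $\|u\|_{Y_{[0,t]}}$ from time $0$; since $x\mapsto(c^q+x^q)^{1/q}$ is $1$-Lipschitz, the already-fixed contribution of the earlier intervals enters the restarted map only as a $1$-Lipschitz offset of the cutoff, so the same contraction constant applies on every subinterval. Concatenating the pieces yields the global mild solution $(u^n,T)$, and uniqueness on $[0,T]$ follows from the uniqueness of the fixed point on each subinterval.
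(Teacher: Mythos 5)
Your proposal is correct and follows essentially the same route as the paper: a small-time Banach fixed point in $\MFshort{r}$ with $r=r(n)$ determined by the contraction constant, the Hölder-in-time gain $r^{\delta}, r^{\tilde\delta}$ from strict subcriticality, the splitting of the truncated nonlinear differences into a cutoff-times-Lipschitz part and a cutoff-difference part controlled by \eqref{cutoffLipschitz}, and iteration on successive intervals via the shifted filtration, $W^{kr}$ and \eqref{shiftStochIntegral}. Your observation that the past contribution enters the restarted cutoff only as a $1$-Lipschitz offset is precisely the paper's device of the modified truncation $\varphi_{n,k}(u,t)=\theta_n(Z_t(u))$ with $Z_t$ built from $(\norm{u_k^n}^q+\norm{u}^q)^{1/q}$, so the same contraction constant applies on every subinterval, exactly as in the paper's Step 2; the only point treated more carefully in the paper is uniqueness against local solutions defined up to a stopping time, which it handles by running the contraction estimate in $\MFshort{\tau\land r}$ rather than appealing to fixed-point uniqueness on deterministic subintervals, but your uniform contraction estimate yields this with the same computation.
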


\begin{proof}
We fix $n\in\N$  and construct the solution from the assertion inductively. \\
\emph{Step 1:}
 We look for a fixed point of the operator 
given by
\begin{align*}
	K^n u:=U(\cdot)u_0+K_{det}^n u+K_{Strat}^n u+K_{stoch}^n u,\qquad u\in \FpSpaceIni,
\end{align*}
 where $r>0$ will be chosen small enough. 
 Let $u\in \FpSpaceIni.$ 
 A pathwise application of Proposition \ref{DetStrichartz} and integration over $\Omega$ yields 
\begin{align*}
	\norm{U(\cdot)u_0}_\FpSpaceIniNorm\lesssim \norm{u_0}_{L^2}.
\end{align*}
We define a stopping time by 
\begin{align*}
	\tau:=\inf \left\{t\ge 0: \norm{u}_{L^q(0,t;L^{\alpha+1})}+ \norm{u}_{L^{\tilde{q}}(0,t;L^{2\gamma})} \ge 2n\right\}\land r
\end{align*}
and set 
\begin{align*}
	\delta:=1+\frac{d}{4}(1-\alpha)>0,\qquad \tilde{\delta}=1+\frac{d}{2}(1-\gamma).
\end{align*}
We estimate 
\begin{align*}
	\norm{K_{det}^n u}_{E_r}\lesssim& \norm{\cutoffUdot \vert u\vert^{\alpha-1}u}_{L^{q'}(0,r;L^{\frac{\alpha+1}{\alpha}})}
	\le \norm{ \vert u\vert^{\alpha-1}u}_{L^{q'}(0,\tau;L^{\frac{\alpha+1}{\alpha}})}\le\norm{u}_{L^q(0,\tau;L^{\alpha+1})}^\alpha \tau^\delta
	\le \left(2 n\right)^\alpha r^\delta
\end{align*}
using Proposition \ref{DetStrichartz} $b)$ and $d)$ and the H\"older inequality. In the same spirit, we get 
\begin{align*}
	\norm{K_{Strat}^n u}_{E_r}\lesssim& \bigNorm{ \mu_1\left(\cutoffUdot \vert u\vert^{2(\gamma-1)}u\right)}_{L^{\tilde{q}'}(0,r;L^{\frac{2\gamma}{2\gamma-1}})}+\norm{\mu_2(u)}_{L^1(0,r;L^2)}\\
	\le& \frac{1}{2}\sumM \norm{e_m}_\LInfty^2\norm{ \vert u\vert^{2(\gamma-1)}u}_{L^{\tilde{q}'}(0,\tau;L^{\frac{2\gamma}{2\gamma-1}})}+\frac{1}{2}\sumM \norm{B_m}_{\mathcal{L}(L^2)}^2 r \norm{u}_{L^\infty(0,r;L^2)}\\
	\lesssim& \norm{u}_{L^{\tilde{q}}(0,\tau;L^{2\gamma})}^{2\gamma-1} \tau^{\tilde{\delta}} +r \norm{u}_{L^\infty(0,r;L^2)}
	\le \left(2 n\right)^{2\gamma-1} r^{\tilde{\delta}}+r \norm{u}_{L^\infty(0,r;L^2)}.
\end{align*}
Integrating over $\Omega$ yields 
\begin{align*}
	\norm{K_{det}^n u}_\FpSpaceIniNorm \lesssim \left(2 n\right)^\alpha r^\delta,\qquad 
	\norm{K_{Strat}^n u}_\FpSpaceIniNorm\lesssim \left(2 n\right)^{2\gamma-1} r^{\tilde{\delta}}+r\norm{ u}_\FpSpaceIniNorm.
\end{align*}
By Proposition \ref{StochStrichartz}, we obtain
\begin{align*}
&\norm{K_{stoch}^n u}_\FpSpaceIniNorm\lesssim \norm{B_1 \left(\cutoffUdot\vert u\vert^{\gamma-1}u\right)+B_2(u)}_{L^p(\Omega, L^2(0,r;\HS(Y,L^2)))}\\
&\hspace{0.5cm}\le \left(\sumM \norm{e_m}_{{\LInfty}}^2\right)^{\frac{1}{2}} \norm{\cutoffUdot\vert u\vert^{\gamma-1}u}_{L^p(\Omega,L^2(0,r;L^2))}
+\left(\sumM \norm{B_m}_{\mathcal{L}(L^2)}^2\right)^{\frac{1}{2}}\norm{u}_{L^p(\Omega,L^2(0,r;L^2))}\\
&\hspace{0.5cm}\lesssim  \norm{\cutoffUdot\vert u\vert^{\gamma-1}u}_{L^p(\Omega,L^2(0,r;L^2))}
+r^\frac{1}{2}\norm{u}_{L^p(\Omega,L^\infty(0,r;L^2))}
\end{align*}
From the pathwise inequality
\begin{align*}
	\norm{\cutoffUdot\vert u\vert^{\gamma-1}u}_{L^2(0,r;L^2)}
	\le \norm{u}_{L^{2\gamma}(0,\tau;L^{2\gamma})}^\gamma
	\le \tau^{\frac{\tilde{\delta}}{2}}\norm{u}_{L^{\tilde{q}}(0,\tau;L^{2\gamma})}^\gamma
	\le r^{\frac{\tilde{\delta}}{2}} (2n)^{\gamma}
\end{align*}
we conclude
\begin{align*}
	\norm{K_{stoch}^n u}_\FpSpaceIniNorm\lesssim r^{\frac{\tilde{\delta}}{2}} (2n)^{\gamma}+r^\frac{1}{2} \norm{u}_\FpSpaceIniNorm
\end{align*}
and altogether,
\begin{align*}
	\norm{K^n u}_\FpSpaceIniNorm \lesssim \norm{u_0}_\Lzwei+(2n)^\alpha r^\delta+(2n)^{2\gamma-1} r^{\tilde{\delta}}+ r^{\frac{\tilde{\delta}}{2}} (2n)^{\gamma}+\left(r+r^\frac{1}{2}\right) \norm{u}_\FpSpaceIniNorm<\infty
\end{align*}
for $u\in \FpSpaceIni$ and therefore the invariance of $\FpSpaceIni$ under $K^n.$ 
To show the contractivity of $K^n,$ we take $u_1,u_2\in \FpSpaceIni$ and define stopping times 
\begin{align*}
\tau_j:=\inf \left\{t\ge 0: \norm{u_j}_{L^q(0,t;L^{\alpha+1})}+ \norm{u_j}_{L^{\tilde{q}}(0,t;L^{2\gamma})}\ge 2n\right\}\land r,\qquad j=1,2,
\end{align*}
 and fix $\omega\in\Omega.$ Without loss of generality, we assume $\tau_1(\omega)\le \tau_2(\omega).$ We use the deterministic Strichartz estimates from Proposition \ref{DetStrichartz} 
\begin{align*}
	\norm{K_{det}^n(u_1)-K_{det}^n(u_2)}_{E_r}
	\lesssim& \norm{\cutoffUEinsDot \vert u_1\vert^{\alpha-1}u_1-\cutoffUZweiDot \vert u_2\vert^{\alpha-1}u_2}_{L^{q'}(0,r;L^{\frac{\alpha+1}{\alpha}})}\\
	\le&  \norm{\cutoffUEinsDot \left(\vert u_1\vert^{\alpha-1}u_1- \vert u_2\vert^{\alpha-1}u_2\right)}_{L^{q'}(0,r;L^{\frac{\alpha+1}{\alpha}})}\\
	&+ \norm{\left[\cutoffUEinsDot-\cutoffUZweiDot\right] \vert u_2\vert^{\alpha-1}u_2}_{L^{q'}(0,r;L^{\frac{\alpha+1}{\alpha}})}.
\end{align*}
By \eqref{cutoffLipschitz} and Lemma \ref{interpolation}, we get
\begin{align}\label{cutoffLipschitzProof}
	\left\vert \varphi_n(u_1,s)-\varphi_n(u_2,s)\right\vert
	&\le \frac{1}{n} \left\vert \norm{u_1}_{L^{q}(0,s;L^{\alpha+1})}+\norm{u_1}_{L^{\tilde{q}}(0,s;L^{2 \gamma})}-\norm{u_2}_{L^{q}(0,s;L^{\alpha+1})}-\norm{u_2}_{L^{\tilde{q}}(0,s;L^{2 \gamma})}\right\vert\nonumber\\
	&\le \frac{1}{n} \left(\norm{u_1-u_2}_{L^{q}(0,s;L^{\alpha+1})}+\norm{u_1-u_2}_{L^{\tilde{q}}(0,s;L^{2 \gamma})}\right)\nonumber\\
	&\le \frac{2}{n} \norm{u_1-u_2}_{E_s}
\end{align}
and we can use this as well as Lemma \ref{FrechetNonlinear} with $p=\alpha+1$ and $\sigma=\alpha$ to derive
\begin{align*}
	&\norm{\cutoffUEinsDot \left(\vert u_1\vert^{\alpha-1}u_1- \vert u_2\vert^{\alpha-1}u_2\right)}_{L^{q'}(0,r;L^{\frac{\alpha+1}{\alpha}})}\le \norm{\vert u_1\vert^{\alpha-1}u_1- \vert u_2\vert^{\alpha-1}u_2}_{L^{q'}(0,\tau_1;L^{\frac{\alpha+1}{\alpha}})}\\
	&\hspace{2cm}\le \tau_1^\delta \left(\norm{u_1}_{L^q(0,\tau_1,L^{\alpha+1})}+\norm{u_2}_{L^q(0,\tau_1,L^{\alpha+1})}\right)^{\alpha-1} \norm{u_1-u_2}_{L^q(0,\tau_1,L^{\alpha+1})}\\
	&\hspace{2cm}\le r^\delta (4n)^{\alpha-1} \norm{u_1-u_2}_{L^q(0,\tau_1,L^{\alpha+1})}
	\le r^\delta (4n)^{\alpha-1} \norm{u_1-u_2}_{E_r}
\end{align*}
and
\begin{align*}
	&\norm{\left[\cutoffUEinsDot-\cutoffUZweiDot\right] \vert u_2\vert^{\alpha-1}u_2}_{L^{q'}(0,r;L^{\frac{\alpha+1}{\alpha}})}
	\le \frac{2}{n} \bigNorm{ \norm{u_1-u_2}_{E_\cdot} \vert u_2\vert^{\alpha-1}u_2}_{L^{q'}(0,\tau_2;L^{\frac{\alpha+1}{\alpha}})}\\
	&\hspace{2cm}\le \frac{2}{n} \norm{u_1-u_2}_{E_r} \norm{\vert u_2\vert^{\alpha-1}u_2}_{L^{q'}(0,\tau_2;L^{\frac{\alpha+1}{\alpha}})}\\
	&\hspace{2cm}\le \frac{2}{n} \norm{u_1-u_2}_{E_r} \tau_2^\delta \norm{u_2}_{L^{q}(0,\tau_2;L^{\alpha+1})}^\alpha
	\le \frac{2}{n} \norm{u_1-u_2}_{E_r} r^\delta (2n)^{\alpha}.
\end{align*}
We obtain
\begin{align*}
\norm{K_{det}^n(u_1)-K_{det}^n(u_2)}_{E_r}
\lesssim& \left(2^{\alpha+1}+4^{\alpha-1}\right)r^\delta n^{\alpha-1} \norm{u_1-u_2}_{E_r}.
\end{align*}
Analogously,
\begin{align*}
\norm{K_{Strat}^n(u_1)-K_{Strat}^n(u_2)}_{E_r}
\lesssim& \norm{\cutoffUEinsDot \vert u_1\vert^{\alpha-1}u_1-\cutoffUZweiDot \vert u_2\vert^{\alpha-1}u_2}_{L^{\tilde{q}'}(0,r;L^{\frac{2\gamma}{2\gamma-1}})}\\&+\norm{u_1-u_2}_{L^1(0,r;L^2)}\\
\lesssim& \left[\left(2^{2\gamma}+4^{2(\gamma-1)}\right)r^{\tilde{\delta}} n^{2(\gamma-1)}+r\right] \norm{u_1-u_2}_{L^{\tilde{q}}(0,r;L^{2\gamma})}\\
\le& \left[\left(2^{2\gamma}+4^{2(\gamma-1)}\right)r^{\tilde{\delta}} n^{2(\gamma-1)}+r\right] \norm{u_1-u_2}_{E_r}.
\end{align*}
 For the stochastic convolution, we estimate
\begin{align*}
	\norm{\cutoffUEinsDot\vert u_1\vert^{\gamma-1}u_1-\cutoffUZweiDot\vert u_2\vert^{\gamma-1}u_2}_{L^2(0,r;L^2)}
	\lesssim& 	\norm{\cutoffUEinsDot \left(\vert u_1\vert^{\gamma-1}u_1-\vert u_2\vert^{\gamma-1}u_2\right)}_{L^2(0,r;L^2)}\\
	&+\norm{(\cutoffUEinsDot-\cutoffUZweiDot) \vert u_2\vert^{\gamma-1}u_2}_{L^2(0,r;L^2)}.
\end{align*}
The terms on the RHS can be treated by Lemma \ref{FrechetNonlinear} with $p=2\gamma$ and $\sigma=\gamma$ and Lemma \ref{interpolation}
\begin{align*}
	&\norm{\cutoffUEinsDot \left(\vert u_1\vert^{\gamma-1}u_1-\vert u_2\vert^{\gamma-1}u_2\right)}_{L^2(0,r;L^2)}\\
	&\hspace{4cm}\lesssim \left(\norm{u_1}_{L^{2\gamma}(0,\tau_1;L^{2\gamma})}+\norm{u_2}_{L^{2\gamma}(0,\tau_1;L^{2\gamma})}\right)^{\gamma-1}	\norm{ u_1-u_2}_{L^{2\gamma}(0,\tau_1;L^{2\gamma})}\\
	&\hspace{4cm}\lesssim \tau_1^{\frac{\tilde{\delta}}{2}} \left(\norm{u_1}_{L^{\tilde{q}}(0,\tau_1;L^{2\gamma})}+\norm{u_2}_{L^{\tilde{q}}(0,\tau_1;L^{2\gamma})}\right)^{\gamma-1}	\norm{ u_1-u_2}_{L^{\tilde{q}}(0,\tau_1;L^{2\gamma})}\\
		&\hspace{4cm}\lesssim r^{\frac{\tilde{\delta}}{2}} \left(4 n\right)^{\gamma-1}	\norm{ u_1-u_2}_{E_r}
\end{align*}
and by the estimate \eqref{cutoffLipschitzProof}
\begin{align*}
	\norm{(\cutoffUEinsDot-\cutoffUZweiDot) \vert u_2\vert^{\gamma-1}u_2}_{L^2(0,r;L^2)}
	&\le \frac{2}{n} \norm{u_1-u_2}_{E_r} \norm{\vert u_2\vert^{\gamma-1}u_2}_{L^2(0,\tau_2;L^2)}\\
			&\le \frac{2}{n} \norm{u_1-u_2}_{E_r} \tau_2^{\frac{\tilde{\delta}}{2}} \norm{u_2}_{L^{\tilde{q}}(0,\tau_2;L^{2\gamma})}^\gamma\\
			&\le \frac{2}{n} \norm{u_1-u_2}_{E_r} r^{\frac{\tilde{\delta}}{2}} \left(2n\right)^\gamma.
\end{align*}
By Proposition \ref{StochStrichartz}, this yields
\begin{align*}
\norm{K_{stoch}^n(u_1)-K_{stoch}^n(u_2)}_\FpSpaceIniNorm
&\lesssim
	\norm{B_1(\cutoffUEinsDot\vert u_1\vert^{\gamma-1}u_1-\cutoffUZweiDot\vert u_2\vert^{\gamma-1}u_2)}_{L^q(\Omega,L^2(0,r;\HS(Y,L^2)))}\\
	&\hspace{1cm}+\norm{B_2(u_1-u_2)}_{L^q(\Omega,L^2(0,r;\HS(Y,L^2)))}\\
	&\le \left(\sumM \norm{e_m}_{L^\infty}^2\right)^\frac{1}{2}r^{\frac{\tilde{\delta}}{2}} n^{\gamma-1} \left(4^{\gamma-1} +2^{\gamma+1}\right)	\norm{u_1-u_2}_\FpSpaceIniNorm\\
	&\hspace{2cm}+\left(\sumM \norm{B_m}_{\mathcal{L}(L^2)}^2\right)^\frac{1}{2} r^\frac{1}{2}\norm{u_1-u_2}_\FpSpaceIniNorm\\
	&\lesssim \left[r^{\frac{\tilde{\delta}}{2}} n^{\gamma-1} \left(4^{\gamma-1} +2^{\gamma+1}\right)+r^\frac{1}{2}\right]	\norm{u_1-u_2}_\FpSpaceIniNorm.
\end{align*}
Collecting the estimates for the other terms leads to
\begin{align}\label{contractionEstimate}
\norm{K^n(u_1)-K^n(u_2)}_\FpSpaceIniNorm
\lesssim& \Big[\left(2^{\alpha+1}+4^{\alpha-1}\right)r^\delta n^{\alpha-1}+\left(2^{2\gamma}+4^{2(\gamma-1)}\right)r^{\tilde{\delta}} n^{2(\gamma-1)}+r\nonumber\\
&\hspace{3cm}+ \left(4^{\gamma-1} +2^{\gamma+1}\right)r^{\frac{\tilde{\delta}}{2}} n^{\gamma-1}+r^\frac{1}{2}\Big] \norm{u_1-u_2}_\FpSpaceIniNorm.
\end{align}
Hence, there is a small time $r=r(n,\alpha,\gamma)>0,$ such that $K^n$ is a strict contraction in $\FpSpaceIni$ with Lipschitz constant $\le \frac{1}{2}$ and Banach's Fixed Point Theorem yields $u^{n,1}\in \FpSpaceIni$ with $K^n(u_1^n)=u_1^n.$ \\

\emph{Step 2:}
We choose $r>0$ as in the first step and
 assume that we have $k\in \N$ and $u_k^n\in \MFshort{kr}$ with
\begin{align*}
u_k^n=U(\cdot)u_0+K_{det}^n u_k^n+ K_{Strat}^n u_k^n+K_{stoch}u_k^n
\end{align*}
on the interval $[0,kr].$ In order to extend $u_k^n$ to $[kr,(k+1)r]$, we define a new cutoff function by $\varphi_{n,k}(u,t):=\theta_n\left(Z_t(u)\right),$ where $\left(Z_t(u)\right)_{t\in[0,r]}$ is a continuous, $\Filtration^{kr}$-adapted process given by
\begin{align*}
	  	Z_t(u):=&(\norm{u_k^n}_{L^{q}(0,kr;L^{\alpha+1})}^q+\norm{u}_{L^{q}(0,t;L^{\alpha+1})}^q)^\frac{1}{q}+(\norm{u_k^n}_{L^{\tilde{q}}(0,kr;L^{2\gamma})}^{\tilde{q}}+\norm{u}_{L^{\tilde{q}}(0,t;L^{2\gamma})}^{\tilde{q}})^\frac{1}{\tilde{q}}
\end{align*}
for $t\in [0,r]$ and $u\in\MFshortShifted{kr}{r}.$ Moreover, we set
\begin{align*}
K_{det,k}^n u(t):=-\im \lambda\int_0^t U(t-s)\left[ \varphi_{n,k}(u,s) \vert u(s)\vert^{\alpha-1} u(s)\right]\df s,
\end{align*}
\begin{align*}
K_{Strat,k}^n u(t):=&\int_0^t U(t-s)\left[\varphi_{n,k}(u,s)\mu_1\left(\vert u(s) \vert^{2(\gamma-1)} u(s)\right)+\mu_2\left(u(s)\right)\right]\df s,
\end{align*}
\begin{align*}
	K_{stoch,k}^n u(t):= -\im \int_0^t U(t-s) \left[\varphi_{n,k}(u,s)B_1\left(\vert u(s)\vert^{\gamma-1} u(s)\right)+B_2 u(s)\right]\df W^{kr}(s)
\end{align*}
for $t\in [0,r]$ and $u\in\MFshortShifted{kr}{r}$ and 
\begin{align*}
	K_k^n u:=U(\cdot)u_k^n(kr)+K_{det,k}^n u+K_{Strat,k}^n u+K_{stoch,k}^n u,\qquad  u\in\MFshortShifted{kr}{r},
\end{align*}
We take $v_1,v_2 \in \MFshortShifted{kr}{r}$ and define the $\F^{kr}$-stopping times
\begin{align}\label{stoppingTimeInductionStepLzwei}
\tau_j:=\inf \left\{t\ge 0: Z_t(v_j)\ge 2n\right\}\land r,\qquad j=1,2.
\end{align}
Without loss of generality, we assume $\tau_1(\omega)\le \tau_2(\omega)$ and follow the lines of the initial step where we replace $u_j$ by $v_j$ and $\varphi_n(u_j)$ by  $\varphi_{n,k}(v_j)$ for $j=1,2.$
We obtain
\begin{align*}
	\norm{K_{det,k}^n v_1-K_{det,k}^n v_2}_{E_r}\le& \tau_1^\delta \left(\norm{v_1}_{L^q(0,\tau_1,L^{\alpha+1})}+\norm{v_2}_{L^q(0,\tau_1,L^{\alpha+1})}\right)^{\alpha-1} \norm{v_1-v_2}_{E_r}\\
	&+ \frac{2}{n} \norm{v_1-v_2}_{E_r} \tau_2^\delta \norm{v_2}_{L^{q}(0,\tau_2;L^{\alpha+1})}^\alpha
\end{align*}
and by
	$\norm{v_j}_{L^q(0,\tau_1,L^{\alpha+1})}\le Z_{\tau_1}(v_j)\le 2 n$
for $j=1,2,$ we conclude
\begin{align*}
\norm{K_{det,k}^n v_1-K_{det,k}^n v_2}_{E_r}
&\le \tau_1^\delta \left(4 n\right)^{\alpha-1} \norm{v_1-v_2}_{E_r}+ \frac{2}{n} \norm{v_1-v_2}_{E_r} \tau_2^\delta \left(2 n\right)^\alpha \\
&\le r^\delta \left( \left(4 n\right)^{\alpha-1}+ \frac{2}{n}  \left(2 n\right)^\alpha\right)\norm{v_1-v_2}_{E_r}.
\end{align*}
Analogously, the estimates for $K_{Strat}^n$ and $K_{stoch}^n$ from the first step can be adapted to get  
\begin{align*}
\norm{K_{Strat,k}^n(v_1)-K_{Strat,k}^n(v_2)}_{E_r}
\lesssim& \left(\left(2^{2\gamma}+4^{2(\gamma-1)}\right)r^{\tilde{\delta}} n^{2(\gamma-1)} +r\right)\norm{v_1-v_2}_{E_r},
\end{align*}
\begin{align*}
\norm{K_{stoch,k}^n(v_1)-K_{stoch,k}^n(v_2)}_{\mathbb{M}_{\Filtration^{kr}}^p(\Omega,E_r)}
&\lesssim \left(r^{\frac{\tilde{\delta}}{2}} n^{\gamma-1} \left(4^{\gamma-1} +2^{\gamma+1}\right)+r^\frac{1}{2}\right)	\norm{v_1-v_2}_{\mathbb{M}_{\Filtration^{kr}}^p(\Omega,E_r)}
\end{align*}
and thus
\begin{align}\label{contractionEstimateInduction}
\norm{K_k^n(v_1)-K_k^n(v_2)}_{\mathbb{M}_{\Filtration^{kr}}^p(\Omega,E_r)}
\lesssim& \Big[\left(2^{\alpha+1}+4^{\alpha-1}\right)r^\delta n^{\alpha-1}+\left(2^{2\gamma}+4^{2(\gamma-1)}\right)r^{\tilde{\delta}} n^{2(\gamma-1)}+r\nonumber\\
&\hspace{2.5cm}+ \left(4^{\gamma-1} +2^{\gamma+1}\right)r^{\frac{\tilde{\delta}}{2}} n^{\gamma-1}+r^\frac{1}{2}\Big] \norm{v_1-v_2}_{\mathbb{M}_{\Filtration^{kr}}^p(\Omega,E_r)}.
\end{align}
Since the constant is the same as in the initial step, the definition of $r>0$ yields that $K_k^n$ is a strict contraction in $\MFshortShifted{kr}{r}.$ We call the unique fixed point  $v_{k+1}^n$ and set
	\begin{equation*}
	u_{k+1}^n(t):= \left\{ \begin{aligned} 
	&u_k^n(t), \hspace{2.25cm}t\in [0,kr],\\ &v_{k+1}^n(t-kr), \hspace{1cm} t\in[k r,(k+1)r].
	\end{aligned}\right.
	\end{equation*}
Obviously, $u_{k+1}^n$ is a continuous $\Filtration$-adapted process with $\norm{u_{k+1}^n}_{L^p(\Omega,E_{(k+1)r})}<\infty$ and therefore $u_{k+1}^n\in \MFshort{(k+1)r}.$
Let $t\in [k r,(k+1)r]$ and define $\tilde{t}:=t-k r.$ Then, the definition of $K_k^n$ and the induction assumption yield
\begin{align*}
	u_{k+1}^n(t)=&v_{k+1}^n(\tilde{t})=K_k^n v_{k+1}^n(\tilde{t})
	=U(\tilde{t})u_k^n(kr)+K_{det,k}^n v_{k+1}^n(\tilde{t})+K_{Strat,k}^n v_{k+1}^n(\tilde{t})+K_{stoch,k}^n v_{k+1}^n(\tilde{t})\\
	=&U(t)u_0+
	\left[U(\tilde{t})K_{det}^n u_k^n (kr)+K_{det,k}^n v_{k+1}^n(\tilde{t})\right]+\left[U(\tilde{t}) K_{Strat,k}^n u_k^n(kr)+K_{Strat,k}^n v_{k+1}^n(\tilde{t})\right]\\
	&+\left[U(\tilde{t})K_{stoch}u_k^n(r)+K_{stoch,k}^n v_{k+1}^n(\tilde{t})\right].
\end{align*}
Using the identities
\begin{align*}
	\varphi_n(u_k^n,s)=\varphi_n(u_{k+1}^n,s),\qquad 
	\varphi_{n,k}(v_{k+1}^n,\tilde{s})=\varphi_n(u_{k+1}^n,kr+\tilde{s})
\end{align*}
for $s\in [0,kr]$ and $\tilde{s}\in [0,r]$, 
we compute
\begin{align*}
	U(\tilde{t})K_{det}^n u_k^n (kr)+K_{det,k}^n& v_{k+1}^n(\tilde{t})=
	-\im \lambda U(\tilde{t})\int_0^{kr} U(k r -s)\left[ \varphi_n(u_k^n,s)\vert u_k^n(s)\vert^{\alpha-1} u_k^n(s)\right]\df s\\
	&-\im \lambda \int_0^{\tilde{t}} U(\tilde{t}-\tilde{s})\left[ \varphi_{n,k}(v_{k+1}^n,\tilde{s}) \vert v_{k+1}^n(\tilde{s})\vert^{\alpha-1} v_{k+1}^n(\tilde{s})\right]\df \tilde{s}\\
	=&
	-\im \lambda \int_0^{kr} U(t-s)\left[ \varphi_n(u_{k+1}^n,s) \vert u_{k+1}^n(s)\vert^{\alpha-1} u_{k+1}^n(s)\right]\df s\\
	&-\im \lambda \int_0^{\tilde{t}} U(\tilde{t}-\tilde{s})\left[ \varphi_n(u_{k+1}^n,kr+\tilde{s}) \vert u_{k+1}^n(kr+\tilde{s})\vert^{\alpha-1} u_{k+1}^n(k r+\tilde{s})\right]\df \tilde{s}\\
	=&
	-\im \lambda \int_0^{t} U(t-s)\left[ \varphi_n(u_{k+1}^n,s) \vert u_{k+1}^n(s)\vert^{\alpha-1} u_{k+1}^n(s)\right]\df s=K_{det}^n u_{k+1}^n(t),
\end{align*}
where we used the substitution $s=k r + \tilde{s}$ in the second integral for the last step.
Analogously,
\begin{align*}
	U(\tilde{t})K_{Strat,k}^n u_k^n (kr)+K_{Strat,k}^n v_{k+1}^n(\tilde{t})
	=K_{Strat}^n u_{k+1}^n(t),
\end{align*}
\begin{align*}
U(\tilde{t})K_{stoch}^n u_k^n (kr)+K_{Stoch,k} v_{k+1}^n(\tilde{t})
=K_{stoch}^n u_{k+1}^n(t),
\end{align*}
where one uses \eqref{shiftStochIntegral} for the stochastic convolutions.
Hence, we get 
\begin{align*}
u_{k+1}^n(t)
=U(t)u_0+
K_{det}^n u_{k+1}^n(t)+K_{Strat}^n u_{k+1}^n(t)
+K_{stoch}^n u_{k+1}^n(t)=K^n u_{k+1}^n(t) 
\end{align*}
for $t\in [kr,(k+1)r]$ and therefore, 
$u_{k+1}^n$ is a fixed point of $K^n$ in $\MFshort{(k+1)r}.$ 
Define $k:= \lfloor \frac{T}{r}+1 \rfloor.$ Then, $u^n:=u_{k}^n$ is the process from the assertion.

\emph{Step 3:}
Now, we turn our attention to uniqueness. Let $(\tilde{u},\tau)$ be another local mild solution of \eqref{ProblemStratonovichApprox}. As in \eqref{contractionEstimate}, we get 
\begin{align*}
\norm{u-\tilde{u}}_{\MFshort{\tau\land r}}=&\norm{K^n(u)-K^n(\tilde{u})}_{\MFshort{\tau\land r}}\\
\le& C \Big[\left(2^{\alpha+1}+4^{\alpha-1}\right)r^\delta n^{\alpha-1}+\left(2^{2\gamma}+4^{2(\gamma-1)}\right)r^{\tilde{\delta}} n^{2(\gamma-1)}+r\nonumber\\
&\hspace{3cm}+ \left(4^{\gamma-1} +2^{\gamma+1}\right)r^{\frac{\tilde{\delta}}{2}} n^{\gamma-1}+r^\frac{1}{2}\Big] \norm{u-\tilde{u}}_{\MFshort{\tau\land r}}\\
\le& \frac{1}{2}\norm{u-\tilde{u}}_{\MFshort{\tau\land r}},
\end{align*} 
which leads to $u(t)=\tilde{u}(t)$ in $\MFshort{\tau\land r},$ i.e. $u=\tilde{u}$ almost surely on $\left\{t\le \tau\land r\right\}.$
 This can be iterated to see that $u(t)=\tilde{u}(t)$  almost surely on $\left\{t\le \sigma_k\right\}$ with $\sigma_k:=\tau \land (k r)$ for $k\in\N.$ The assertion follows from $\sigma_k =\tau$ for $k$ large enough. 
\end{proof}

%
%
%
%

In the following two Propositions,  we use the results on the truncated equation \eqref{ProblemStratonovichApprox} to derive existence and uniqueness for the original problem \eqref{ProblemStratonovich}. The proofs are quite standard and in the literature, analogous arguments have been used in various contexts for extensions of existence and uniqueness results from integrable to non-integrable initial values and from globally to locally Lipschitz nonlinearities, see for example \cite{UMDstochEvo}, Theorem 7.1, \cite{BrzezniakConvolutions}, Theorem 4.10, and \cite{Seidler1993}, Theorem 1.5.

\begin{Prop}\label{LocalExistenceLzwei}
	Let  $\alpha \in (1,1+\frac{4}{d}),$ $\gamma \in (1,1+\frac{2}{d})$ and $\left(u^n\right)_{n\in\N}\subset \MFshort{T}$ be the sequence constructed in Proposition \ref{cutoffEquationLzwei}.
	For $n\in\N,$
	we define the stopping time $\tau_n$ by
	\begin{align*}
	\tau_n:=\inf \left\{t\in [0,T]: \norm{u^n}_{L^q(0,t;L^{\alpha+1})}+ \norm{u^n}_{L^{\tilde{q}}(0,t;L^{2\gamma})}\ge n\right\}\land T.
	\end{align*}
	Then, the following assertions hold: 
	\begin{enumerate}
		\item[a)] We have $0<\tau_n\le \tau_k$ almost surely for $n\le k$ and  $u^n(t)=u^k(t)$ almost surely on $\left\{t\le \tau_n \right\}.$ 
		\item[b)] The triple $\left(u,\left(\tau_n\right)_{n\in\N},\tau_\infty\right)$ with  $u(t):=u^n(t)$ for $t\in [0,\tau_n]$ and $\tau_\infty:=\sup_{n\in\N}\tau_n$ is a local mild solution of \eqref{ProblemStratonovich}.
	\end{enumerate}
\end{Prop}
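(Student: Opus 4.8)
The plan rests on the elementary observation that the truncation in \eqref{ProblemStratonovichApprox} is switched off before $\tau_n$. Writing $\Theta_t(v):=\norm{v}_{L^q(0,t;L^{\alpha+1})}+\norm{v}_{L^{\tilde{q}}(0,t;L^{2\gamma})}$, the map $t\mapsto\Theta_t(u^n)$ has continuous paths and vanishes at $t=0$ for $u^n\in\MFshort{T}$; since $\theta_n\equiv 1$ on $[0,n]$, I would first deduce that $\varphi_n(u^n,s)=1$ for every $s\in[0,\tau_n]$ and that $\tau_n>0$ almost surely. Restricting the fixed point identity \eqref{equationXn} to $\{t\le\tau_n\}$ and dropping the now-trivial factor $\varphi_n$ then turns it into the untruncated mild equation \eqref{mildEquation} for $u^n$; the only point requiring care is that the stochastic convolution localizes, i.e.\ its value on $\{t\le\tau_n\}$ depends on the integrand only through $s\le\tau_n$, where $\varphi_n(u^n,s)=1$.

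For part a), fix $n\le k$ and set $\sigma:=\inf\{t:\ \Theta_t(u^n)\ge n \text{ or } \Theta_t(u^k)\ge n\}\land T$, the first time that the relevant norm of \emph{either} $u^n$ or $u^k$ reaches $n$. On $[0,\sigma)$ both norms stay below $n\le k$, so all of $\varphi_n(u^n,\cdot),\varphi_n(u^k,\cdot),\varphi_k(u^n,\cdot),\varphi_k(u^k,\cdot)$ equal $1$. Restricting the $k$-truncated identity satisfied by $u^k$ to $\{t\le\sigma\}$ and using $\varphi_k(u^k,\cdot)=\varphi_n(u^k,\cdot)=1$ shows that $(u^k,\sigma)$ is a local mild solution of the $n$-truncated equation \eqref{ProblemStratonovichApprox}. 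The uniqueness part of Proposition \ref{cutoffEquationLzwei}, which compares the global solution $(u^n,T)$ with any local mild solution, then yields $u^k=u^n$ almost surely on $[0,\sigma)$. Consequently $\Theta_t(u^n)=\Theta_t(u^k)$ on $[0,\sigma)$, which forces $\sigma=\tau_n$ and, by continuity, $u^n=u^k$ on $\{t\le\tau_n\}$. Since then the relevant norm of $u^k$ equals that of $u^n$ and hence is $\le n\le k$ on $[0,\tau_n]$, the definition of $\tau_k$ gives $\tau_n\le\tau_k$, completing a).

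For part b), part a) guarantees that $u(t):=u^n(t)$ for $t\in[0,\tau_n]$ is well defined, the two prescriptions agreeing on overlaps, and that $\tau_n\nearrow\tau_\infty$. Because $u=u^n$ on $[0,\tau_n]$ and $u^n\in\MFshort{T}$, I obtain $u\in\MFshort{\tau_n}$ for every $n$ with the continuity, adaptedness and predictability inherited from $u^n$, hence $u\in\MFloc{\tau_\infty}$. Finally, the mild equation \eqref{mildEquation} for $u$ on $\{t\le\tau_n\}$ follows from the first paragraph: there $u=u^n$ and $\varphi_n(u^n,\cdot)\equiv 1$, so each of the deterministic, Stratonovich-correction and stochastic convolutions in \eqref{equationXn} reduces to its untruncated counterpart, the stochastic term again via the localization of the It\^o integral up to $\tau_n$.

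The main obstacle is the seemingly circular comparison in a): one cannot assert a priori that $u^k$ stays below the threshold $n$ on $[0,\tau_n]$, which is precisely what is needed before invoking uniqueness. Introducing the joint stopping time $\sigma$ breaks this circularity, since there both truncations are simultaneously inactive, and the identification $\sigma=\tau_n$ emerges as a \emph{consequence} of uniqueness rather than as an assumption. The one recurring technical point, in both parts, is the localization of the stochastic convolution that justifies replacing $\varphi_n$ by $1$ without altering the integral on $\{t\le\tau_n\}$; this is standard but should be stated explicitly.
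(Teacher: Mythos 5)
Your proof is correct and takes essentially the same approach as the paper: the paper likewise breaks the apparent circularity with an auxiliary stopping time, namely $\tau_{k,n}:=\inf\{t\in[0,T]: Z^k(t)\ge n\}\land T$ defined via $u^k$ alone (a slight variant of your joint time $\sigma$, equally valid), shows $(u^k,\tau_{k,n})$ is a local mild solution of the $n$-truncated equation \eqref{ProblemStratonovichApprox} because $\varphi_n(u^k,\cdot)=\varphi_k(u^k,\cdot)=1$ there, and then uses the uniqueness part of Proposition \ref{cutoffEquationLzwei} to conclude $u^k=u^n$ and $\tau_{k,n}=\tau_n$. Part b) is argued identically in the paper, via $\varphi_n(u^n,t)=1$ on $\{t\le\tau_n\}$ turning \eqref{equationXn} into the untruncated mild equation \eqref{mildEquation}.
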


\begin{proof}
\emph{ad a):} 
We note that $\tau_n$ is a welldefined stopping time with $\tau_n>0$ almost surely, since 
\begin{align*}
	Z^n(t):=\norm{u^n}_{L^q(0,t;L^{\alpha+1})}+ \norm{u^n}_{L^{\tilde{q}}(0,t;L^{2\gamma})}\le 2 \norm{u^n}_{E_t}\le 2\norm{u^n}_{E_T}<\infty,\qquad t\in[0,T],
\end{align*}
defines an increasing, continuous and $\Filtration$-adapted process $Z^n: \Omega\times [0,T]\to [0,\infty)$ with $Z^n(0)=0.$ For $n\le k,$ we set
\begin{align*}
	\tau_{k,n}:=\inf \left\{t\in [0,T]: Z^k(t)\ge n\right\}\land T.
\end{align*}
Then, we have $\tau_{k,n}\le \tau_k$ and $\varphi_n(u^k,t)=1=\varphi_k(u^k,t)$ on $\left\{t\le \tau_{k,n}\right\}.$ Hence, $(u^k,\tau_{k,n})$ is a solution of \eqref{ProblemStratonovichApprox} and by the uniqueness part of Proposition \ref{cutoffEquationLzwei}, we obtain $u^k(t)=u^n(t)$ almost surely on $\left\{t\le \tau_{k,n}\right\}.$ But this leads to $Z^k(t)=Z^n(t)$ on $\left\{t\le \tau_{k,n}\right\}$ and  $\tau_{k,n}=\tau_n$ almost surely which implies the assertion.\\

	\emph{ad b):}
	By part a), $u$ is welldefined up to a null set, where we define $u:=0$ and $\tau_\infty=T.$ The monotonicity of $\left(\tau_n\right)_{n\in\N}$ yields $\tau_n\to \tau_\infty$ almost surely. Moreover, $u\in \MFshort{\tau_n}$ by  Proposition \ref{cutoffEquationLzwei} and therefore $u\in \MFloc{\tau}.$	
 From \eqref{equationXn} and the identity
	\begin{align*}
	\varphi_n(u,t)=\varphi_n(u^n,t)=1\qquad \text{a.s on $\left\{t\land \tau_n\right\}$},
	\end{align*}
	we finally obtain
		\begin{align*}
		u(t)=&  U(t)u_0+ \int_0^{t} U(t-s)\left[-\im \lambda\vert u(s)\vert^{\alpha-1} u(s)+\mu_1\left(\vert u(s)\vert^{2(\gamma-1)}u(s)\right)+\mu_2(u(s))\right] \df s\\
		&\hspace{2cm}- \im \int_0^{t}  U(t-s)\left[B_1\left( \vert u(s)\vert^{\gamma-1} u(s)\right)+B_2u(s)\right] \df W(s)
		\end{align*}
	almost surely on $\left\{t\le \tau_n\right\}$ for all $n\in\N.$
\end{proof}

\begin{Prop}\label{PropUniqueness}
	Let  $\alpha \in (1,1+\frac{4}{d}),$ $\gamma \in (1,1+\frac{2}{d})$ and $\left(u_1,\left(\sigma_n\right)_{n\in\N},\sigma\right),$ $\left(u_2,\left(\tau_n\right)_{n\in\N},\tau\right)$ be local mild solutions to \eqref{ProblemStratonovich}.
	Then, 
	\begin{align*}
		u_1(t)=u_2(t) \qquad \text{a.s. on $\{t< \sigma\land \tau\}$},
	\end{align*}
	i.e. the solution of \eqref{ProblemStratonovich} is unique.
\end{Prop}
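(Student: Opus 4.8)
The plan is to reduce the uniqueness of the original problem \eqref{ProblemStratonovich} to the uniqueness already established for the truncated problem \eqref{ProblemStratonovichApprox} in Proposition \ref{cutoffEquationLzwei}. The idea is as follows. Given two local mild solutions $(u_1,(\sigma_n),\sigma)$ and $(u_2,(\tau_n),\tau)$ of \eqref{ProblemStratonovich}, I first observe that on the set where the relevant Strichartz norms of the solutions are bounded by $n$, each solution also solves the \emph{truncated} equation, because the cutoff $\theta_n$ equals $1$ there. More precisely, for a fixed $n\in\N$ I would define new stopping times adapted to the solution at hand,
\begin{align*}
	\rho_j^n:=\inf\left\{t\ge 0: \norm{u_j}_{L^q(0,t;L^{\alpha+1})}+\norm{u_j}_{L^{\tilde q}(0,t;L^{2\gamma})}\ge n\right\}\land\sigma\land\tau,\qquad j=1,2,
\end{align*}
and argue that on $\{t\le \rho_j^n\}$ one has $\varphi_n(u_j,t)=1$, so that $(u_j,\rho_j^n)$ is a local mild solution of the truncated equation \eqref{ProblemStratonovichApprox}.

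The key step is then to invoke the uniqueness statement from Proposition \ref{cutoffEquationLzwei}. Applying it to the two truncated solutions yields $u_1=u_2$ almost surely on $\{t<\rho_1^n\land\rho_2^n\}$. The next step is to show that $\rho_1^n=\rho_2^n$ almost surely: once the two processes agree up to $\rho_1^n\land\rho_2^n$, the defining Strichartz norms coincide up to that time, which forces the two first-hitting times of the level $n$ to coincide, exactly as in the argument for $\tau_{k,n}=\tau_n$ in the proof of Proposition \ref{LocalExistenceLzwei}, part a). Writing $\rho^n$ for this common stopping time, I obtain $u_1=u_2$ almost surely on $\{t<\rho^n\}$.

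Finally, I would let $n\to\infty$. Since $(u_j,(\sigma_n),\sigma)$ and $(u_j,(\tau_n),\tau)$ are local mild solutions, the processes $u_j$ belong to $\MF{\sigma_n}$ (resp. $\MF{\tau_n}$) and their Strichartz norms are finite up to any of the localizing stopping times; consequently $\rho_j^n\nearrow$ a time that exhausts $\sigma\land\tau$ as $n\to\infty$, so $\rho^n\nearrow\sigma\land\tau$ almost surely. Taking the union over $n$ of the agreement sets $\{t<\rho^n\}$ then gives $u_1(t)=u_2(t)$ almost surely for all $t<\sigma\land\tau$, which is precisely the claimed uniqueness.

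I expect the main obstacle to be the careful justification that each solution of the untruncated equation really solves the truncated equation on the appropriate random time interval, and in particular verifying that $\rho_j^n\to\sigma\land\tau$, i.e. that the Strichartz norms do not remain bounded before the maximal existence time so that the cutoff is eventually inactive on any compact subinterval. This requires combining the membership $u_j\in\MFloc{\sigma}$ (resp. $\MFloc{\tau}$) with the continuity and monotonicity of the process $Z_t(u_j)$; the measurability bookkeeping for the stopping times $\rho_j^n$ and the identification $\rho_1^n=\rho_2^n$ are routine but must be handled with the same localization care as in Proposition \ref{LocalExistenceLzwei}.
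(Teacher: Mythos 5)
Your overall strategy---localize so that the cutoff equals $1$, regard each $u_j$ as a solution of the truncated problem \eqref{ProblemStratonovichApprox} up to a suitable stopping time, invoke the uniqueness part of Proposition \ref{cutoffEquationLzwei}, and pass to the limit---is exactly the paper's reduction. However, your stopping times are capped at the wrong level, and this is a genuine gap. You define $\rho_j^n:=\inf\{t\ge 0: Z_t(u_j)\ge n\}\land\sigma\land\tau$. On the event where the Strichartz norms never reach the level $n$ (which has positive probability in general, e.g.\ for solutions that are global with small norms), one has $\rho_j^n=\sigma\land\tau$, and there the pair $(u_j,\rho_j^n)$ need \emph{not} be a local mild solution of \eqref{ProblemStratonovichApprox} in the sense required: by Definition \ref{SolutionDef}, the mild equation \eqref{mildEquation} for $u_1$ is only known to hold almost surely on $\{t\le\sigma_k\}$ for each $k$, hence on $\{t<\sigma\}$ but not necessarily at $t=\sigma$; the membership $u_1\in\MFloc{\sigma}$ gives $u_1\in\MF{\sigma_k}$ for every $k$ but does not yield the moment bound $\E\sup_{t\in[0,\rho_1^n]}\norm{u_1(t)}_{L^2}^p<\infty$ that the solution concept for the truncated equation demands (your hitting time controls $\norm{u_1}_{Y_{\rho_1^n}}\le n$ pathwise, but not the $C([0,\cdot],L^2)$-part); and indeed $u_1$, whose paths live in $E_{[0,\sigma)}$, need not even extend continuously to $t=\sigma$. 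So the appeal to Proposition \ref{cutoffEquationLzwei} is unjustified precisely on $\{\rho_j^n=\sigma\land\tau\}$.

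The paper avoids this by bringing the localizing sequences into the stopping time: it fixes \emph{both} $k$ and $n$ and works with $\nu_{k,n}:=\inf\{t\in[0,T]: Z_t(u_1)\lor Z_t(u_2)\ge n\}\land\sigma_k\land\tau_k$. Capping at $\sigma_k\land\tau_k$ guarantees $u_1\in\MF{\sigma_k}$ and $u_2\in\MF{\tau_k}$, that the mild equations hold up to $\nu_{k,n}$, and that $\varphi_n(u_1,t)=\varphi_n(u_2,t)=1$ on $\{t\le\nu_{k,n}\}$, so that $(u_1,\nu_{k,n})$ and $(u_2,\nu_{k,n})$ genuinely are local mild solutions of \eqref{ProblemStratonovichApprox}; uniqueness for the truncated problem then gives agreement on $\{t\le\nu_{k,n}\}$, and $\nu_{k,n}\to\sigma\land\tau$ almost surely as $n,k\to\infty$, because $Z$ is a.s.\ finite up to $\sigma_k\land\tau_k$. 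Two further remarks: once you use a single joint stopping time (the maximum over $j=1,2$ inside the infimum), your intermediate identification $\rho_1^n=\rho_2^n$ becomes superfluous; and your closing worry that the Strichartz norms might ``remain bounded before the maximal time'' is beside the point---the limit argument only needs a.s.\ finiteness of $Z_t(u_j)$ for $t\le\sigma_k\land\tau_k$, which the localization provides, whereas the real obstruction is the endpoint/integrability issue described above.
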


\begin{proof}
	We fix $k,n\in\N$ and define a stopping time by 
	\begin{align*}
	\nu_{k,n}:=&\inf \Big\{t\in [0,T]: \left(\norm{u_1}_{L^{q}(0,t;L^{\alpha+1})}+\norm{u_2}_{L^{\tilde{q}}(0,t;L^{2 \gamma})}\right)\\&\hspace{6cm}\lor\left( \norm{u_2}_{L^{q}(0,t;L^{\alpha+1})}+\norm{u_2}_{L^{\tilde{q}}(0,t;L^{2 \gamma})}\right)\ge n \Big\}\land \sigma_k\land \tau_k.
	\end{align*}
	Hence, $\varphi_n(u_1,t)=\varphi_n(u_2,t)=1$ on $\left\{t\le \nu_{k,n}\right\}$ and therefore, $\left(u_1, \nu_{k,n}\right)$ and
	$\left(u_2, \nu_{k,n}\right)$ are local mild solutions of \eqref{ProblemStratonovichApprox}. By the uniqueness part of Proposition \ref{cutoffEquationLzwei}, we get 
	\begin{align*}
	u_1(t)=u_2(t) \qquad \text{a.s. on $\{t\le \nu_{k,n}\}$},
	\end{align*}  	
	which yields the assertion, since $\nu_{k,n}\to \sigma\land \tau$ almost surely for $n,k\to \infty.$
\end{proof}

In the Propositions \ref{LocalExistenceLzwei} and \ref{PropUniqueness}, we have proved Theorem \ref{mainTheorem}, a) in the subcritical case, i.e. $\alpha \in (1,1+\frac{4}{d}),$ $\gamma \in (1,1+\frac{2}{d}).$ We continue with the critical setting.

\begin{Prop}
	Let $\alpha \in (1,1+\frac{4}{d}],$ $\gamma \in (1,1+\frac{2}{d}]$ with $\alpha=1+\frac{4}{d}$ or $\gamma=1+\frac{2}{d}.$ Then there is a unique local mild solution of \eqref{ProblemStratonovichBasics}.
\end{Prop}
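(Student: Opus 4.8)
The plan is to mirror the subcritical argument from Proposition \ref{cutoffEquationLzwei}, but to exploit the crucial structural difference in the critical case: when $\alpha=1+\frac{4}{d}$ or $\gamma=1+\frac{2}{d}$, the corresponding scaling exponent $\delta=1+\frac{d}{4}(1-\alpha)$ or $\tilde\delta=1+\frac{d}{2}(1-\gamma)$ vanishes. In the subcritical estimates we gained a factor $r^\delta$ or $r^{\tilde\delta}$ which made the map $K^n$ a contraction for small $r$ while keeping the truncation level $n$ fixed; in the critical case this factor is absent, so shrinking $r$ no longer buys contractivity. The standard remedy, which the excerpt announces explicitly after the proof sketch, is to replace the large parameter $n$ by a small parameter $\nu\in(0,1)$: instead of truncating at a large ball, we truncate at a small one, so that the small truncation level itself supplies the smallness needed for contraction.

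First I would set up the truncated equation exactly as in \eqref{ProblemStratonovichApprox}, but with the cutoff $\varphi_n$ replaced by $\varphi_\nu(u,t)=\theta_\nu(\norm{u}_{L^q(0,t;L^{\alpha+1})}+\norm{u}_{L^{\tilde q}(0,t;L^{2\gamma})})$, where $\theta_\nu$ is supported in $[0,2\nu]$ and equals $1$ on $[0,\nu]$. I would then repeat the fixed-point estimates of Step~1 of Proposition \ref{cutoffEquationLzwei} verbatim, using Proposition \ref{DetStrichartz} for $K_{det}^\nu$ and $K_{Strat}^\nu$ and Proposition \ref{StochStrichartz} for $K_{stoch}^\nu$, together with Lemma \ref{FrechetNonlinear} and Lemma \ref{interpolation}. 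Tracking the critical case, say $\alpha=1+\frac{4}{d}$, the analogue of the contraction estimate \eqref{contractionEstimate} now reads schematically
\begin{align*}
\norm{K^\nu(u_1)-K^\nu(u_2)}_{L^p(\Omega,E_r)}
\lesssim \Big[\nu^{\alpha-1}+\nu^{2(\gamma-1)}r^{\tilde\delta}+r+\nu^{\gamma-1}r^{\tfrac{\tilde\delta}{2}}+r^{\tfrac12}\Big]\,\norm{u_1-u_2}_{L^p(\Omega,E_r)},
\end{align*}
where the borderline term that previously carried $r^\delta$ now carries no power of $r$ but instead a factor $\nu^{\alpha-1}$ coming from the truncation bound $Z_t\le 2\nu$. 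Choosing first $\nu$ small enough to control the $\nu$-dependent terms and then $r$ small enough to control the genuinely subcritical powers of $r$ gives a Lipschitz constant $\le\frac12$, and Banach's fixed point theorem yields a local solution on $[0,r]$; if $\gamma=1+\frac2d$ is the critical exponent the roles of $\delta$ and $\tilde\delta$ are interchanged.

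The main obstacle, and the point I would be most careful about, is that the smallness now sits in $\nu$ rather than in $r$, so the iteration of Step~2 cannot simply be repeated to reach the fixed terminal time $T$: extending across successive intervals keeps re-incurring the non-decaying $\nu^{\alpha-1}$ term, so one only obtains a genuinely \emph{local} solution rather than a global solution of the truncated problem. Consequently the statement of this Proposition asserts only local existence and uniqueness. I would therefore recover the solution of the untruncated equation \eqref{ProblemStratonovichBasics} by the same stopping-time mechanism as before: define $\tau_\nu:=\inf\{t\ge0: \norm{u}_{L^q(0,t;L^{\alpha+1})}+\norm{u}_{L^{\tilde q}(0,t;L^{2\gamma})}\ge\nu\}\land r$, observe that $\varphi_\nu(u,\cdot)=1$ on $\{t\le\tau_\nu\}$ so that the truncated and untruncated equations agree there, and let $\nu\to0$ to produce the increasing sequence of stopping times in the sense of Definition \ref{SolutionDef}. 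Uniqueness follows exactly as in Proposition \ref{PropUniqueness}, by comparing two local solutions on a common stopping-time interval where both cutoffs equal one and invoking the contraction estimate. The one genuinely new technical check is that all the Hölder-in-time estimates remain valid with the vanishing exponent replaced by an honest power of the truncation radius; since $\delta=0$ (resp.\ $\tilde\delta=0$) means the Hölder step degenerates to an equality $\norm{\cdot}_{L^{q'}L^{r'}}=\norm{\cdot}^{\alpha}_{L^qL^{\alpha+1}}$ on $[0,\tau_\nu]$, this is in fact automatic, and the smallness is transferred entirely onto the truncation level $\nu$.
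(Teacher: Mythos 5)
Your Step~1 is essentially the paper's: the same truncation with a \emph{small} level $\nu\in(0,1)$, the same observation that $\delta=0$ (resp.\ $\tilde\delta=0$) removes the smallness in $r$ from the critical term so that one must choose first $\nu$ and then $r$ small to get a contraction, and your remark that the degenerate H\"older step becomes an equality (since $q'\alpha=q$ exactly when $\alpha=1+\frac4d$) is correct. The genuine gap is in how you manufacture the increasing sequence of stopping times required by Definition \ref{SolutionDef}: you propose to ``let $\nu\to0$'', but this runs in the wrong direction. The stopping time $\tau_\nu=\inf\{t\ge 0:\norm{u}_{L^q(0,t;L^q)}+\norm{u}_{L^{q_1}(0,t;L^{p_1})}\ge\nu\}\land r$ is monotone \emph{increasing} in $\nu$, so sending $\nu\downarrow 0$ shrinks the existence interval towards zero rather than exhausting a maximal one; and you cannot send $\nu\to\infty$ because contractivity requires $\nu$ small. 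As written, your construction delivers a single local solution up to one small stopping time and no mechanism to continue it.

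The paper's resolution is to keep $\nu$ and $r$ \emph{fixed} and iterate in time: restart the fixed-point argument at $\tau_1$ with initial datum $u_1(\tau_1)$, the shifted Wiener process $W^{\tau_1}$, and a truncation counting only the critical norm accumulated \emph{after} $\tau_1$; gluing via \eqref{shiftStochIntegral} yields pairs $(u_n,\tau_n)$ with $\tau_n=\tau_{n-1}+\tilde\tau_n$ increasing to $\tau_\infty$. Each restart works with the same constants because the invariance estimate involves the datum only through $\norm{u_{n-1}(\tau_{n-1})}_{L^2}$, and the contraction constant is independent of the datum — so your worry that the iteration ``keeps re-incurring the non-decaying $\nu^{\alpha-1}$ term'' is misplaced: that term does not obstruct restarting, it only prevents solving the truncated problem globally on $[0,T]$ with a single cutoff counter (which is why the conclusion is merely local). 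Relatedly, uniqueness is \emph{not} ``exactly as in Proposition \ref{PropUniqueness}'': there the exhaustion of $[0,\sigma\land\tau)$ uses cutoff levels $n\to\infty$, which is unavailable here. Instead one defines inductively stopping times $\mu_{n+1}$ at which the critical norm accumulated since $\mu_n$ reaches $\nu$, obtains uniqueness step by step, and must then prove $\mu_n\to\sigma_1\land\sigma_2$ by contradiction: if the $\mu_n$ stagnated strictly before $\sigma_{1,m}\land\sigma_{2,m}$ with positive probability, one of the solutions would pick up critical Strichartz norm at least $\nu$ on infinitely many disjoint intervals, contradicting the almost sure finiteness of these norms coming from $u_i\in\MFloc{\sigma_i}$. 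This exhaustion argument is the second ingredient missing from your sketch.
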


\begin{proof}
	\begin{align*}
	\norm{K_1^\nu u}_\FpSpaceIniNorm \lesssim \norm{u_0}_\Lzwei+(2n)^\alpha r^\delta+(2n)^{2\gamma-1} r^{\tilde{\delta}}+ r^{\frac{\tilde{\delta}}{2}} (2n)^{\gamma}+\left(r+r^\frac{1}{2}\right) \norm{u}_\FpSpaceIniNorm<\infty
	\end{align*}
	\begin{align}\label{contractionEstimateCritical}
	\norm{K_1^\nu(u_1)-K^n(u_2)}_\FpSpaceIniNorm
	\lesssim& \Big[\left(2^{\alpha+1}+4^{\alpha-1}\right)r^\delta n^{\alpha-1}+\left(2^{2\gamma}+4^{2(\gamma-1)}\right)r^{\tilde{\delta}} n^{2(\gamma-1)}+r\nonumber\\
	&\hspace{3cm}+ \left(4^{\gamma-1} +2^{\gamma+1}\right)r^{\frac{\tilde{\delta}}{2}} n^{\gamma-1}+r^\frac{1}{2}\Big] \norm{u_1-u_2}_\FpSpaceIniNorm.
	\end{align}
	\emph{Step 1.}
	Let $\nu>0$ and $q:= 2+\frac{4}{d}.$ Then, $(q,q)$ is a Strichartz pair. For $r>0,$ we define
	\begin{align*}
	Y_r:=L^q(0,r;L^q(\Rd)),\qquad E_r:=C([0,r],\Lzwei)\cap Y_r
	\end{align*}
	and as in the proof of Proposition \ref{cutoffEquationLzwei}, we set
	\begin{align*}
	K_1^\nu u:=U(\cdot)u_0+K_{det}^\nu u+K_{Strat}^\nu u+K_{stoch}^\nu u
	\end{align*}
	with the convolution operators from \eqref{ConvolutionNonlinear}, \eqref{ConvolutionStrat} and \eqref{ConvolutionStoch}
	 and obtain the estimates 
	 	\begin{align*}
	 	\norm{K_1^\nu u}_\FpSpaceIniNorm \lesssim \norm{u_0}_\Lzwei+(2\nu)^\alpha r^\delta+(2\nu)^{2\gamma-1} r^{\tilde{\delta}}+ r^{\frac{\tilde{\delta}}{2}} (2\nu)^{\gamma}+\left(r+r^\frac{1}{2}\right) \norm{u}_\FpSpaceIniNorm
	 	\end{align*}
	 	\begin{align*}
	 	\norm{K_1^\nu(u_1)-K_1^\nu(u_2)}_\FpSpaceIniNorm
	 	\lesssim& \Big[\left(2^{\alpha+1}+4^{\alpha-1}\right)r^\delta \nu^{\alpha-1}+\left(2^{2\gamma}+4^{2(\gamma-1)}\right)r^{\tilde{\delta}} \nu^{2(\gamma-1)}+r\nonumber\\
	 	&\hspace{3cm}+ \left(4^{\gamma-1} +2^{\gamma+1}\right)r^{\frac{\tilde{\delta}}{2}} \nu^{\gamma-1}+r^\frac{1}{2}\Big] \norm{u_1-u_2}_\FpSpaceIniNorm.
	 	\end{align*}
	for $u,u_1,u_2\in \MFshort{r},$
	where we set 
	\begin{align*}
		\delta:=1+\frac{d}{4}(1-\alpha),\qquad \tilde{\delta}:=1+\frac{d}{2}(1-\gamma).
	\end{align*}
	 Note that replacing the integer $n$ by $\nu>0$ in the cutoff function does not change the estimates at all. Since we have $\delta=0$ or $\tilde{\delta}=0$ by the assumption, we cannot ensure that $K_1^\nu$ is a contraction by taking $r$ small enough. But if we choose $\nu$ and $r$ sufficiently small, we get a unique fixed point $u_1\in \MFshort{r}$  of $K_1^\nu.$\\
	By the definition of the truncation function $\varphi_\nu$ in \eqref{truncation}, $u_1$ is a solution of the original equation, as long as $\norm{u_1}_{L^q(0,t; L^q)}+\norm{u_1}_{L^{q_1}(0,t; L^{p_1})}\le \nu$ for 
	\begin{align*}
		p_1:=\left\{
		\begin{aligned}
		&2\gamma, \hspace{1.5cm} \alpha=1+\frac{4}{d}, \\
		&\alpha+1,\hspace{1cm} \gamma=1+\frac{2}{d},
		\end{aligned}\right.
	\end{align*}
	and $q_1>2$ such that $(p_1,q_1)$ is a Strichartz pair. 	
	Hence, the pair $(u_1,\tau_1)$ with 
	\begin{align*}
	\tau_1:=\inf \left\{t\ge 0: \norm{u_1}_{L^q(0,t;L^q)}+\norm{u_1}_{L^{q_1}(0,t; L^{p_1})}\ge \nu\right\}\land r
	\end{align*}
	is a local mild solution of \eqref{ProblemStratonovich}.\\

	 \emph{Step 2.} Next, we define the operator 
	 	\begin{align*}
	 	K_2^\nu u:=U(\cdot)u_1(\tau_1)+K_{det}^\nu u+K_{Strat}^\nu u+K_{stoch,2}^\nu u
	 	\end{align*}
	 	with 
\begin{align*}
K_{stoch,2}^\nu u(t):=&-\im\int_0^t U(t-s)\left[ \varphi_n(u,s) B_1 \left(  \vert u(s) \vert^{\gamma-1} u(s) \right)+B_2 u(s)\right] \df W^{\tau_1}(s)
\end{align*}	 	
and as above, we derive the estimates 
	 	\begin{align*}
	 	\norm{K_2^\nu u}_\FpSpaceIniNorm \lesssim \norm{u_1(\tau_1)}_\Lzwei+(2\nu)^\alpha r^\delta+(2\nu)^{2\gamma-1} r^{\tilde{\delta}}+ r^{\frac{\tilde{\delta}}{2}} (2\nu)^{\gamma}+\left(r+r^\frac{1}{2}\right) \norm{u}_\FpSpaceIniNorm
	 	\end{align*}
	 	\begin{align*}
	 	\norm{K_2^\nu(u_1)-K_2^\nu(u_2)}_\FpSpaceIniNorm
	 	\lesssim& \Big[\left(2^{\alpha+1}+4^{\alpha-1}\right)r^\delta \nu^{\alpha-1}+\left(2^{2\gamma}+4^{2(\gamma-1)}\right)r^{\tilde{\delta}} \nu^{2(\gamma-1)}+r\nonumber\\
	 	&\hspace{3cm}+ \left(4^{\gamma-1} +2^{\gamma+1}\right)r^{\frac{\tilde{\delta}}{2}} \nu^{\gamma-1}+r^\frac{1}{2}\Big] \norm{u_1-u_2}_\FpSpaceIniNorm.
	 	\end{align*}
%
for $u,u_1,u_2\in \MFshortTauEins{r}.$ We get a unique fixed point $\tilde{u}_2\in \MFshortTauEins{r}$  of $K_2^\nu$ and define
\begin{align*}
\tilde{\tau}_2:=\inf \left\{t\ge 0: \norm{u_2}_{L^q(0,t;L^q)}+\norm{u_2}_{L^{q_1}(0,t;L^{p_1})}\ge \nu\right\}\land r
\end{align*}
and $\tau_2:=\tau_1+\tilde{\tau}_2.$
Analogously to the proof of Proposition \ref{cutoffEquationLzwei},  one can show using \eqref{shiftStochIntegral}, that the pair $(u_2,\tau_2)$ with 
	\begin{equation*}
	u_{2}(t):= \left\{ \begin{aligned} 
	&u_1(t), \hspace{1.9cm}t\in [0,\tau_1],\\ &\tilde{u}_2(t-\tau_1), \hspace{1cm} t\in[\tau_1,\tau_2],
	\end{aligned}\right.
	\end{equation*}
defines a local mild solution of \eqref{ProblemStratonovich}. Iterating this procedure yields a sequence $\left(u_n,\tau_n\right)_{n\in\N}$ and with  $\tau_\infty:=\sup_{n\in\N}\tau_n$ and $\tau_0=0,$ we conclude that
\begin{align*}
	u(t):=\mathbf{1}_{\{t=0\}}u_0+\sum_{n=1}^\infty u_n(t) \mathbf{1}_{(\tau_{n-1},\tau_n]}(t) \quad \text{on  $\left\{t\le \tau_\infty\right\}$},
\end{align*}
the triple $\left(u,\left(\tau_n\right)_{n\in\N},\tau_\infty\right)$ is a local mild solution in the sense of Definition \ref{SolutionDef}.\\

\emph{Step 3.}
	 In order to show uniqueness, we take two local mild solutions $(u_1,\left(\sigma_{1,n}\right)_{n\in\N}\sigma_1)$ and $(u_2,\left(\sigma_{2,n}\right)_{n\in\N},\sigma_2)$ and set
	 \begin{align*}
	 	Z_{a,b}(u):=\norm{u}_{L^q(a,b;L^q)}+\norm{u}_{L^{q_1}(a,b;L^{p_1})}
	 \end{align*}
	 for $0\le a<b\le T.$ 
Moreover, we inductively define 
	 \begin{align*}
	 	\mu_1:=\inf \left\{t\in [0,\sigma_1): Z_{0,t}(u_1)\ge \nu\right\}\land
	 	\inf \left\{t\in [0,\sigma_2): Z_{0,t}(u_2)\ge \nu\right\}\land \sigma_1\land \sigma_2
	 \end{align*}
	 and
	 \begin{align*}
	 	\mu_{n+1}:=\inf \left\{t\in [\mu_n,\sigma_1): Z_{\mu_n,t}(u_1)\ge \nu\right\}\land
	 	\inf \left\{t\in [\mu_n,\sigma_2): Z_{\mu_n,t}(u_2)\ge \nu\right\}\land \sigma_1\land \sigma_2.
	 \end{align*}
	 	 The uniqueness from the first step and $\varphi_\nu(u_1,t)=1=\varphi_\nu(u_2,t)$ almost surely on $\left\{t\le \mu_1\right\}$
	 	  yield $u_1(t)=u_2(t)$ almost surely on $\{t< \sigma_1\land \sigma_2\}\cap \{t\le \mu_1\}.$ By an iteration procedure as above, this can be extended to  $\{t< \sigma_1\land \sigma_2\}\cap \{t\le \mu_n\}$ for all $n\in\N.$
	 
	 In order to show, that $\mu_n\to \sigma_1\land\sigma_2$ as $n\to \infty,$ it is sufficient that for all $m\in\N$ and almost all $\omega\in\Omega$ there is $n=n(\omega)$ with $\mu_{n(\omega)}(\omega)\ge \sigma_{1,m}(\omega)\land\sigma_{2,m}(\omega).$ Assume the opposite, i.e. there is $m\in\N$ with 
	 \begin{align*}
		 \Prob\left(\mu_n< \sigma_{1,m}\land\sigma_{2,m}\quad \forall n\in\N \right)>0.
	 \end{align*}
	 By the definition of $\mu_{n+1},$ we get on each interval $[\mu_n,\mu_{n+1}]$ either $Z_{\mu_n,\mu_{n+1}}(u_1)\ge \nu$ or\\ $Z_{\mu_n,\mu_{n+1}}(u_2)\ge \nu$ with positive probability. Without loss of generality, we assume that there is a subsequence $\left(\mu_{n_k}\right)_{k\in\N}$ with 
	 \begin{align*}
	 	\Prob\left(Z_{\mu_{n_k},\mu_{n_k+1}}(u_1)\ge \nu\quad \forall k\in\N\right)>0
	 \end{align*}
	 and therefore 
	 \begin{align*}
	 	\norm{u_1}_{L^q(0,\sigma_{1,m};L^q)}+\norm{u_1}_{L^{q_1}(0,\sigma_{1,m};L^{p_1})}\ge \sum_{k=1}^\infty \left(\norm{u_1}_{L^q(\mu_{n_k},\mu_{n_k+1};L^q)}+\norm{u}_{L^{q_1}(\mu_{n_k},\mu_{n_k+1};L^{p_1})}\right)=\infty
	 \end{align*}
	 which is infinite with positive probability.
	 This is a contradiction, since both $\norm{u_1}_{L^q(0,\sigma_{1,m};L^q)}$ and $\norm{u_1}_{L^{q_1}(0,\sigma_{1,m};L^{p_1})}$ are almost surely finite by $u_1\in \MFloc{\sigma_1}.$

\end{proof}


We close this section with remarks on possible slight generalizations of Theorem \ref{mainTheorem} a) 
and comment on the transfer of our method to the energy space $\Heins.$ 

\begin{Remark}
	In the proof of the local result, we did not use the special structure of the terms $B_1,$ $B_2$ and
	\begin{align}\label{muRemark}
		\mu_1:=-\frac{1}{2} \sumM \vert e_m\vert^2,\qquad \mu_2:=-\frac{1}{2} \sumM B_m^* B_m.
	\end{align}
	In fact, we only used $B_1,B_2 \in \mathcal{L}(\Lzwei, \HS(Y,\Lzwei)),$ $\mu_1\in \mathcal{L}(\Lzwei)\cap \mathcal{L}(L^{2\gamma}(\Rd))$ and $\mu_2\in \mathcal{L}(\Lzwei).$ But since \eqref{muRemark} is motivated by the Stratonovich noise and will be important for the global existence in the following section, we decided to start with the special case from the beginning.
	
	A generalization of the result from Theorem \ref{mainTheorem} from determistic initial values $u_0\in\Lzwei$ to  $u_0\in L^q(\Omega,\mathcal{F}_0;\Lzwei)$ is straightforward. By the standard localization technique (see e.g. \cite{UMDstochEvo}), a further generalization to $\mathcal{F}_0$-measurable $u_0:\Omega\to \Lzwei$ can be done by another localization procedure if one relaxes the condition $u\in \MFloc{\tau}$ from definition \ref{SolutionDef} to $u\in \mathbb{M}_\Filtration^0(\Omega,E_{[0,\tau)}),$ i.e. $u$ is a continuous $\Filtration$-adapted process in $\Lzwei$ and $\Filtration$-predictable in $L^{2\gamma}(\Rd)$ with 
	\begin{align*}
	\sup_{t\in [0,\tau]}\norm{u(t)}_{L^2}^p+ \norm{u}_{Y_\tau}^p<\infty \qquad \text{a.s.}
	\end{align*}
	For the sake of simplicity, we decided to restrict ourselves to deterministic initial values.
\end{Remark}


\begin{Remark}\label{HeinsTransfer}
	Barbu, R\"ockner and Zhang, \cite{BarbuH1}, and de Bouard and Debussche, \cite{BouardHeins}, also applied their strategy to construct solutions also in $\Heins.$  In contrast to the  $L^2$-case, the pathwise approach has a true advantage here, since it allows to adapt the deterministic fixed point argument in a ball of $L^\infty H^1\cap L^q W^{1,\alpha+1}$ equipped with the metric from $L^\infty L^2\cap L^q L^{\alpha+1}.$ Therefore, \cite{BarbuH1} obtain local wellposedness for all $H^1$-subcritical exponents $\alpha \in (1,1+\frac{4}{(d-2)_+})$ and a blow-up criterium involving the $H^1$-norm in the case of linear multiplicative noise. 
	
	Of course, it is also possible to deal with the $H^1$-problem with the method from the present paper, since the deterministic and stochastic Strichartz estimates are also true in $\Heins.$ 
	In this way, one can weaken 
	the regularity assumptions on the noise from \cite{BarbuH1} significantly to 
	\begin{align}\label{presentAssumptionHeins}
	\sumM \norm{e_m}_{W^{1,\infty}}^2<\infty
	\end{align}	
	and treat nonlinearities of degree
	\begin{align}\label{ExponentsHeins}
		\alpha \in (1,1+\frac{4}{d})\cup (2,1+\frac{4}{(d-2)_+}),\qquad \gamma \in [1,1+\frac{2}{d})\cup (2,1+\frac{2}{(d-2)_+}).
	\end{align}
	Hence, there is a gap compared to the natural $H^1$-subcritical exponents $\alpha$ for $d\ge 4$ and a gap in the range of noise exponents $\gamma$ for $d\ge 2.$ 
	The structure of the admissible sets for $\alpha$ and $\gamma$ indicate that one has to argue differently for small and large exponents. Indeed, the intervals $\alpha \in (1,1+\frac{4}{d})$ and $\gamma \in [1,1+\frac{2}{d})$ can be obtained by a fixed point argument in a ball in $L^\infty H^1\cap L^q W^{1,\alpha+1}$ equipped with the metric induced by the norm in $L^\infty L^2\cap L^q L^{\alpha+1},$ where the nonlinear terms are truncated as above. 
	
	 For twice continuous Fr\'echet differentiable nonlinear terms, however,  i.e. $\gamma>2$ and $\alpha>2,$ one can prove the contraction estimate in the full norm of $L^\infty H^1\cap L^q W^{1,\alpha+1},$ if the truncation also takes place in the stronger norms $L^q W^{1,\alpha+1}$ and $L^{\tilde{q}} W^{1,2\gamma}.$ For more details, we refer to \cite{BouardHeins}.
	
In view of the applications, the gap in \eqref{ExponentsHeins} is not too restrictive, since the main interest lies in the cubic case. This the advantage of the $H^1$-framework, since $\alpha=3$ is $H^1$-subcritical for $d=1,2,3,$ whereas in $L^2$ it is subcritical for $d=1$ and critical $d=2.$ 
\end{Remark}
\section{Global existence}

The goal of this section is to study global existence in the subcritical case $\alpha\in (1,1+\frac{4}{d})$ with conservative noise, i.e. $e_m$ is real valued for each $m\in\N.$ Let us recall that the local solution $\left(u,\left(\tau_n\right)_{n\in\N},\tau_\infty\right)$ is given by $u=u_n$ on $[0,\tau_n],$ where 
	\begin{align}\label{existenceTimes}
	\tau_n:=\inf \left\{t\in [0,T]: \norm{u_n}_{L^q(0,t;L^{\alpha+1})}+\norm{u_n}_{L^{\tilde{q}}(0,t;L^{2\gamma})}\ge n\right\}\land T,\qquad n\in\N,
	\end{align}
for exponents $q,\tilde{q}\in (2,\infty)$  satisfying the Strichartz conditions
\begin{align*}
	\frac{2}{q}+\frac{d}{\alpha+1}=\frac{d}{2},\qquad \frac{2}{\tilde{q}}+\frac{d}{2\gamma}=\frac{d}{2}.
\end{align*}
Moreover, $\tau_\infty=\sup_{n\in\N}\tau_n$ and $u_n$ is the solution of the truncated problem
\begin{equation}\label{ProblemStratonovichApproxGlobal}
\left\{
\begin{aligned}
\df u_n(t)&= \left(\im \Delta u_n(t)-\im \lambda\varphi_n(u_n,t) \vert u_n(t) \vert^{\alpha-1} u_n(t)-\frac{1}{2}\sumM e_m^2 \varphi_n(u_n,t)\vert u_n(t) \vert^{2(\gamma-1)} u_n(t)\right) \df t\\&\qquad-\im   \sumM e_m \varphi_n(u_n,t)  \vert u_n(t) \vert^{\gamma-1} u_n(t) \df \beta_m(t),\\
u(0)&=u_0.
\end{aligned}\right.
\end{equation}	
We remark that \eqref{ProblemStratonovichApproxGlobal} is a simplified version of \eqref{ProblemStratonovichApprox} to study nonlinear noise, i.e. $\gamma\neq 1.$ We will concentrate on this case, since the global existence result in the case $\gamma=1$ has already been treated by de Bouard and Debussche in \cite{BouardLzwei}, Proposition 4.1.
 
The strategy to prove global existence is determined by the definition of the existence times in \eqref{existenceTimes}: We need to find uniform bounds for $u_n$ in the space 
$L^q(0,T;L^{\alpha+1})\cap L^{\tilde{q}}(0,T;L^{2\gamma}).$ Note that this is a drawback of our approach based on the the truncation of the nonlinearities and can be avoided in the deterministic case, where the local existence result comes with a natural blow-up alternative in $\Lzwei$ and the mass conservation directly yields global existence.

 However, we overcome this problem by applying the deterministic and stochastic Strichartz estimates once again. The proof is inspired by the argument of de Bouard and Debussche for $\gamma=1$ mentioned above. Unfortunately, we cannot prove global existence for all $\gamma\in [1,1+\frac{2}{d})$ and have to restrict ourselves to 
	\begin{align*}
	1\le \gamma <\frac{\alpha-1}{\alpha+1} \frac{4+d(1-\alpha)}{4\alpha+d(1-\alpha)}+1.
	\end{align*}
The first ingredient is the mass conservation for the NLS in the stochastic setting. 

\begin{Prop}\label{massConservation}
	Let $\alpha \in (1,1+\frac{4}{d})$ and $\gamma\in [1,1+\frac{2}{d})$ and $e_m\in L^\infty(\Rd,\R)$ for each $m\in\N$ with $\sumM \norm{e_m}_{L^\infty}^2<\infty.$ Let $n\in\N$ and $u_n$ be the global mild solution of \eqref{ProblemStratonovichApprox} from Proposition \ref{cutoffEquationLzwei}.
	 Then, we have
	\begin{align}\label{massIdentity}
	\norm{u_n(t)}_\Lzwei\le \norm{u_0}_\Lzwei,\qquad t\in[0,T].
	\end{align} 
	almost surely. 
\end{Prop}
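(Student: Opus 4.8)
The plan is to apply It\^o's formula to the squared norm $\norm{u_n(t)}_\Lzwei^2$ and to exploit the algebraic structure of \eqref{ProblemStratonovichApproxGlobal} together with the reality of the coefficients $e_m.$ Writing the drift of \eqref{ProblemStratonovichApproxGlobal} as $F(u_n)$ and the diffusion coefficients as $G_m(u_n):=-\im e_m\varphi_n(u_n,\cdot)|u_n|^{\gamma-1}u_n,$ It\^o's formula formally yields
\begin{align*}
	\norm{u_n(t)}_\Lzwei^2
	&= \norm{u_0}_\Lzwei^2 + \int_0^t 2\Real\skpLzwei{u_n(s)}{F(u_n(s))}\,\df s + \int_0^t\sumM\norm{G_m(u_n(s))}_\Lzwei^2\,\df s\\
	&\quad + \sumM\int_0^t 2\Real\skpLzwei{u_n(s)}{G_m(u_n(s))}\,\df\beta_m(s).
\end{align*}
Three cancellations occur. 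Since $\im\Delta$ is skew-adjoint, $\Real\skpLzwei{u_n}{\im\Delta u_n}=0$; since the nonlinearity is gauge invariant, $\skpLzwei{u_n}{|u_n|^{\alpha-1}u_n}=\norm{u_n}_{\LalphaPlusEinsKurz}^{\alpha+1}$ is real, so that $\Real\skpLzwei{u_n}{-\im\lambda|u_n|^{\alpha-1}u_n}=0$; and because $e_m$ is real valued, $\skpLzwei{u_n}{e_m|u_n|^{\gamma-1}u_n}=\int_\Rd e_m|u_n|^{\gamma+1}$ is real, so each martingale integrand $\Real\skpLzwei{u_n}{G_m(u_n)}$ vanishes and the stochastic integral drops out entirely.

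The surviving contributions are the Stratonovich correction in the drift and the It\^o correction from the noise. Using again $e_m\in\R,$
\begin{align*}
	2\Real\skpLzwei{u_n}{-\tfrac{1}{2}\sumM e_m^2\varphi_n|u_n|^{2(\gamma-1)}u_n} &= -\varphi_n\sumM\int_\Rd e_m^2|u_n|^{2\gamma},\\
	\sumM\norm{G_m(u_n)}_\Lzwei^2 &= \varphi_n^2\sumM\int_\Rd e_m^2|u_n|^{2\gamma},
\end{align*}
so that the drift of $\norm{u_n}_\Lzwei^2$ equals $\varphi_n(\varphi_n-1)\sumM\int_\Rd e_m^2|u_n|^{2\gamma}.$ Since $\varphi_n=\theta_n(\cdot)\in[0,1],$ this quantity is pointwise nonpositive, and we obtain the pathwise identity
\begin{align*}
	\norm{u_n(t)}_\Lzwei^2 = \norm{u_0}_\Lzwei^2 + \int_0^t\varphi_n(u_n,s)\big(\varphi_n(u_n,s)-1\big)\sumM\int_\Rd e_m^2|u_n(s)|^{2\gamma}\,\df s \le \norm{u_0}_\Lzwei^2,
\end{align*}
which is precisely \eqref{massIdentity}. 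In particular, mass is exactly conserved as long as $\varphi_n=1,$ i.e. before the truncation becomes active, and can only decrease once it does.

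The point that requires care is that $u_n$ is merely a mild solution, so It\^o's formula cannot be applied directly. I would remove this obstruction by the standard Yosida regularization: apply the resolvent $\Yosida=\nu(\nu-\im\Delta)^{-1},$ which satisfies $\norm{\Yosida}_{\mathcal{L}(\Lzwei)}\le 1$ and $\Yosida\to I$ strongly, to the mild form of \eqref{ProblemStratonovichApproxGlobal}. This produces a process $\Yosida u_n$ with values in the domain of $\Delta$ that solves the corresponding equation in the strong (integrated) form, so that the Hilbert-space It\^o formula applies rigorously to $\norm{\Yosida u_n(t)}_\Lzwei^2.$ For each fixed $\nu$ the Laplacian term still vanishes exactly, but the nonlinear and noise terms now carry factors $\Yosida$ that no longer cancel; the cancellations displayed above are recovered only in the limit $\nu\to\infty.$ The main obstacle is therefore the limit passage. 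Using $\Yosida\to I$ strongly on $\Lzwei,$ together with the fact that $u_n\in\MFshort{T}$ controls $u_n$ in $C([0,T],\Lzwei)$ and, via Lemma \ref{interpolation}, in both Strichartz spaces $L^q(0,T;\LalphaPlusEins)$ and $L^{\tilde q}(0,T;L^{2\gamma}(\Rd)),$ one must show that $\Yosida u_n(t)\to u_n(t)$ and that each regularized drift and diffusion term converges in the appropriate dual-Strichartz and $L^p(\Omega)$-topologies. The integrability guaranteed by Lemma \ref{interpolation} is exactly what renders $|u_n|^{2\gamma}$ and $|u_n|^{\alpha+1}$ integrable in space and time and legitimizes these limits, after which the finite-$\nu$ identity passes to the pathwise identity above and the claim follows.
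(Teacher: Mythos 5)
Your proposal is correct and follows essentially the same route as the paper: the identical formal It\^o computation for $\norm{u_n(t)}_{L^2}^2$ with the three cancellations (skew-adjointness of $\im\Delta$, gauge invariance, real $e_m$) and the key observation that the Stratonovich and It\^o corrections combine into the nonpositive drift $\varphi_n(\varphi_n-1)\sumM\int_{\Rd} e_m^2\vert u_n\vert^{2\gamma}$, which is exactly the paper's use of $\varphi_n^2\le\varphi_n$, followed by the same Yosida regularization and limit $\nu\to\infty$ to make the argument rigorous. The only cosmetic difference is your choice of resolvent $\nu(\nu-\im\Delta)^{-1}$ instead of the paper's $\nu(\nu-\Delta)^{-1}$, which is immaterial since both map into $H^2$, are uniformly bounded on the relevant $L^p$-spaces, and converge strongly to the identity.
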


\begin{proof}	We concentrate on the case $\gamma>1.$ The linear case is simpler since one does not need the truncation of the noise and the correction term. 
	It is well known that the mild equation is equivalent to
			\begin{align}\label{weakEquation}
			u_n(t)=&  u_0+ \int_0^{t}  \left[\im \Delta u_n-\im\lambda \varphi(u_n)\vert u_n\vert^{\alpha-1} u_n-\frac{1}{2}\sumM e_m^2 \varphi(u_n)\vert u_n\vert^{2(\gamma-1)}u_n\right] \df s\nonumber\\
			&-\sumM \im \int_0^t e_m \varphi(u_n) \vert u_n\vert^{\gamma-1}u_n \df \beta_m
			\end{align}
	almost surely as an equation in $H^{-2}(\Rd).$
	We formally apply  Ito's formula to the It\^o process from \eqref{weakEquation} and the function
	$\mass: {\Lzwei} \to \R$ defined by $\mass(v):=\norm{v}_{L^2}^2,$ which is twice continuously Fr\'{e}chet-differentiable with 
	\begin{align*}
	\mass'[v]h_1&= 2 \Real \skpLzwei{v}{ h_1}, \qquad
	\mass''[v] \left[h_1,h_2\right]= 2 \Real \skpLzwei{ h_1}{h_2}
	\end{align*}
	for $v, h_1, h_2\in {\Lzwei}.$ This yields
			\begin{align}\label{Formal calculation}
			\norm{ u_n(t)}_{L^2}^2=&\norm{ u_0}_{L^2}^2+2 \int_0^t \Real \skpLzwei{ u_n}{\im  \Delta u_n-\im \lambda \varphi(u_n) \vert u_n\vert^{\alpha-1}u_n-\frac{1}{2}\sumM e_m^2 \varphi(u_n)\vert u_n\vert^{2(\gamma-1)}u_n} \df s\nonumber\\
						&- 2 \sumM \int_0^t \Real \skpLzwei{ u_n}{\im e_m \varphi(u_n) \vert u_n\vert^{\gamma-1}u_n }\df \beta_m
						+\sumM \int_0^t   
						\Vert  e_m \varphi(u_n) \vert u_n\vert^{\gamma-1}u_n\Vert_{L^2}^2\df s
			\end{align}
			almost surely  in $[0,T]$ and by the formal identities
						\begin{align*}
						\Real \skpLzwei{ u_n}{\im  \Delta u_n}&=0,\qquad 
						\Real \skpLzwei{ u_n}{\im \varphi(u_n)\vert u_n\vert^{\alpha-1}u_n}=0,\\
						\Real \skpLzwei{ u_n}{\im e_m \varphi(u_n)\vert u_n\vert^{\gamma-1}u_n}&=0,
						\end{align*} 
			and
			\begin{align*}
				\sumM \int_0^t \Vert  e_m \varphi(u_n) \vert u_n\vert^{\gamma-1}u_n\Vert_{L^2}^2\df s\le \sumM \int_0^t \Real \skpLzwei{u_n}{\sumM e_m^2 \varphi(u_n)\vert u_n \vert^{2(\gamma-1)}u_n}\df s,
			\end{align*}
			where we used that $\varphi(u_n)\in [0,1]$ implies  $\varphi(u_n)^2\le \varphi(u_n),$ we finally get 
			\begin{align*}
				\norm{ u_n(t)}_{L^2}^2\le \norm{ u_0}_{L^2}^2
			\end{align*}
			almost surely for all $t\in [0,T].$\\
			
	The calculation from above can be made rigorous by a regularization procedure via Yosida approximations $\Yosida:=\nu\left(\nu-\Delta\right)^{-1}$ for $\nu>0$ and a limit process $\nu\to \infty$ using the properties
	$\Yosida\in  \mathcal{L}(\Hs,H^{s+2}(\Rd))$ for $s\in\R$ and
	\begin{align}\label{YosidaProperties}
	&\Yosida f \to f \quad \text{in}\quad {E},\quad \nu \to \infty, \quad f\in{E}\nonumber\\
	&\hspace{1,5cm}\norm{\Yosida}_{{\mathcal{L}(E)}}\le 1,
	\end{align} 
	for $E=\Hs,$ $s\in\R,$ and $E=L^p(\Rd),$ $1<p<\infty.$
\end{proof}

Finally, we are ready to prove the global wellposedness result.

\begin{Prop}\label{globalExistence}
	Let $\alpha \in (1,1+\frac{4}{d})$ and $e_m\in L^\infty(\Rd,\R)$ for each $m\in\N$ with $\sumM \norm{e_m}_{L^\infty}^2<\infty.$ Let $(u_n)_{n\in\N}$ be the sequence of global mild solutions of \eqref{ProblemStratonovichApprox} from Proposition \ref{cutoffEquationLzwei}.
	Suppose that 
	\begin{align*}
	1\le \gamma <\frac{\alpha-1}{\alpha+1} \frac{4+d(1-\alpha)}{4\alpha+d(1-\alpha)}+1.
	\end{align*}	
	Then, we have 
	\begin{align*}
	\Prob\left(\bigcup_{n\in\N}\left\{\tau_n=T\right\}\right)=1.
	\end{align*}
	In particular, the pair $(u,\tau_\infty)$ is a unique global strong solution of \eqref{ProblemStratonovich}.
\end{Prop}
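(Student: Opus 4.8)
The plan is to deduce the global statement from a single uniform-in-$n$ a priori bound and then to conclude by a Chebyshev argument. Concretely, I would first reduce matters to producing a constant $C_T>0$, \emph{independent of $n$}, such that
\[
\E\big[\,\norm{u_n}_{L^q(0,T;L^{\alpha+1})}+\norm{u_n}_{L^{\tilde{q}}(0,T;L^{2\gamma})}\,\big]\le C_T .
\]
Granting this, recall from \eqref{existenceTimes} that the process $Z^n(t):=\norm{u_n}_{L^q(0,t;L^{\alpha+1})}+\norm{u_n}_{L^{\tilde{q}}(0,t;L^{2\gamma})}$ is continuous and nondecreasing and attains the value $n$ at $\tau_n$ on the event $\{\tau_n<T\}$. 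Hence $Z^n(T)\ge n$ on $\{\tau_n<T\}$, so that $n\,\Prob(\tau_n<T)\le\E[Z^n(T)]\le C_T$ and $\Prob(\tau_n=T)\ge 1-C_T/n$. Since $\bigcup_{n}\{\tau_n=T\}\supseteq\{\tau_N=T\}$ for every fixed $N$, letting $N\to\infty$ yields $\Prob(\bigcup_n\{\tau_n=T\})=1$. The uniqueness and the identification of $(u,\tau_\infty)$ as a global solution then follow from Proposition \ref{PropUniqueness} and Definition \ref{SolutionDef}.

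For the a priori bound I would combine two ingredients: the mass bound $\norm{u_n(t)}_{L^2}\le\norm{u_0}_{L^2}$ valid on all of $[0,T]$ (Proposition \ref{massConservation}), and the Strichartz estimates. I partition $[0,T]$ into subintervals $I_j=[t_j,t_{j+1}]$ of a common small length $h$, fixed later. On each $I_j$ I apply the deterministic Strichartz estimates of Proposition \ref{DetStrichartz} pathwise to the free evolution $U(\cdot)u_n(t_j)$, the deterministic convolution and the Stratonovich correction, and the stochastic Strichartz estimate of Proposition \ref{StochStrichartz} (in the $L^p(\Omega)$-sense, after the shift \eqref{shiftStochIntegral}) to the stochastic convolution. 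The crucial move is to avoid the superlinear dependence on $\norm{u_n}_{L^q(I_j;L^{\alpha+1})}$ produced by the power nonlinearities: rather than bounding $\norm{|u_n|^{\alpha-1}u_n}_{L^{q'}(I_j;L^{(\alpha+1)/\alpha})}$ by $\norm{u_n}_{L^q(I_j;L^{\alpha+1})}^{\alpha}$, I would place the factor $|u_n|^{\alpha-1}$ in an $L^2$-based Lebesgue norm and use the interpolation Lemma \ref{interpolation} together with mass conservation, thereby trading the excess power $\alpha-1$ for a power of $\norm{u_0}_{L^2}$ at the cost of a positive power $h^{\delta}$, where $\delta=1+\tfrac{d}{4}(1-\alpha)>0$ precisely by subcriticality. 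The Stratonovich correction is treated analogously. On each $I_j$ one thus reaches a bound of the schematic form
\[
\big(\E\norm{u_n}_{E_{I_j}}^p\big)^{1/p}\le C\big(\E\norm{u_n(t_j)}_{L^2}^p\big)^{1/p}+C\,h^{\delta}\norm{u_0}_{L^2}^{\alpha-1}\big(\E\norm{u_n}_{E_{I_j}}^p\big)^{1/p}+R_j ,
\]
which is now at most linear in the controlling norm, $R_j$ denoting the stochastic contribution.

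The hard part, and the source of the restriction on $\gamma$, is the stochastic term $R_j$. The stochastic convolution can only be controlled in expectation, so the pathwise bootstrap of the deterministic theory is unavailable, and its Hilbert--Schmidt integrand carries the power $\gamma$. Using Proposition \ref{StochStrichartz}, then $\sum_m\norm{e_m}_{L^\infty}^2<\infty$, and finally the same interpolation against the conserved mass, I expect the estimate $R_j\le C\,h^{\tilde{\delta}/2}\norm{u_0}_{L^2}^{\gamma(1-\theta)}\big(\E\norm{u_n}_{E_{I_j}}^{\gamma\theta p}\big)^{1/p}$, with $\tilde{\delta}=1+\tfrac{d}{2}(1-\gamma)$ and $\theta\in(0,1]$ the interpolation exponent of Lemma \ref{interpolation}. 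For the per-interval estimate to close as a sublinear fixed-point inequality the residual power $\gamma\theta$ on the controlling norm must be genuinely subordinate, and balancing this requirement against the time-scalings $h^{\delta},h^{\tilde{\delta}/2}$ in the iteration over the $\lceil T/h\rceil$ subintervals is exactly what produces the quantitative bound $\gamma<\frac{\alpha-1}{\alpha+1}\frac{4+d(1-\alpha)}{4\alpha+d(1-\alpha)}+1$. Once $h$ is chosen small enough, depending only on $\norm{u_0}_{L^2}$, $T$, $\sum_m\norm{e_m}_{L^\infty}^2$ and the exponents, the per-interval bounds can be summed over $j$, using mass conservation to control every $\E\norm{u_n(t_j)}_{L^2}^p$ by $\norm{u_0}_{L^2}^p$, to obtain the uniform constant $C_T$ and thereby complete the proof.
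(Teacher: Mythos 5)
Your overall frame (reduce to a uniform bound $\sup_n\E\big[\norm{u_n}_{L^q(0,T;L^{\alpha+1})}+\norm{u_n}_{L^{\tilde{q}}(0,T;L^{2\gamma})}\big]\le C_T$, then Chebyshev and continuity of the measure) is exactly the paper's Step 2, and your use of mass conservation, stochastic Strichartz and interpolation for the \emph{noise} terms is in the right spirit. But the central analytic claim of your a priori estimate fails. Pointwise in time one has the identity $\norm{\vert u\vert^{\alpha-1}u}_{L^{\frac{\alpha+1}{\alpha}}(\Rd)}=\norm{u}_{L^{\alpha+1}(\Rd)}^{\alpha}$, and since $L^2(\Rd)\not\hookrightarrow L^{\alpha+1}(\Rd)$ there is no H\"older or interpolation split that moves the factor $\vert u\vert^{\alpha-1}$ into an $L^2$-based norm: Lemma \ref{interpolation} interpolates $L^{2\gamma}$ \emph{strictly between} $L^2$ and $L^{\alpha+1}$ and therefore only serves the terms of power $\gamma$ (where $2\le 2\gamma\le \alpha+1$); for the leading nonlinearity, $L^{\alpha+1}$ is the endpoint of the scale and the per-interval estimate is irreducibly superlinear, of the form $\norm{u_n}_{Y_I}\le K+C\,h^{\delta}\norm{u_n}_{Y_I}^{\alpha}$. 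Your schematic "at most linear" bound is thus unattainable, and with it your choice of a \emph{deterministic} common step size $h$ collapses as well: the inhomogeneity $K$ necessarily contains $\norm{K_{stoch}^n u_n}_{Y_T}(\omega)$, a random variable admitting no pathwise bound in terms of $u_0$, $T$ and $\sumM\norm{e_m}_{L^\infty}^2$, so the required smallness of $C h^{\delta}K^{\alpha-1}$ cannot be arranged by an $h$ depending only on deterministic data.

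The paper resolves both points differently. It keeps the $\alpha$-power and closes each interval by the continuity/bootstrap fact \eqref{calculusFact} (possible since $t\mapsto\norm{u_n}_{Y_t}$ is continuous and vanishes at $t=0$), on intervals of the \emph{random} length $\sigma_n\sim\big(C_1 K_n^{\alpha-1}\big)^{-1/\delta}$, where $K_n\sim \norm{u_0}_{L^2}+\dots+\norm{K_{stoch}^n u_n}_{Y_T}$. Summing over the random number $\approx T/\sigma_n$ of intervals produces the pathwise superlinear bound $\norm{u_n}_{Y_T}\lesssim K_n+K_n^{p}$ with $p=\frac{\alpha-1}{\delta}+1$. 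Only then does one integrate over $\Omega$: Proposition \ref{StochStrichartz} gives $\norm{K_{stoch}^n u_n}_{L^p(\Omega,Y_T)}\lesssim \norm{u_n}_{L^{p\gamma}(\Omega,L^{\tilde{q}}(0,T;L^{2\gamma}))}^{\gamma}$, interpolation against the conserved mass (H\"older in $\Omega$ plus Proposition \ref{massConservation}) converts this into $\norm{u_n}_{L^{1}(\Omega,Y_T)}^{\gamma(1-\theta)}$ with $\theta=\frac{\alpha+1-2\gamma}{(\alpha-1)\gamma}$ the $L^2$-weight, and the resulting inequality $\norm{u_n}_{L^1(\Omega,Y_T)}\lesssim 1+\norm{u_n}_{L^1(\Omega,Y_T)}^{p\gamma(1-\theta)}$ closes precisely when $p\gamma(1-\theta)<1$, which is the stated hypothesis on $\gamma$. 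So the restriction on $\gamma$ does not come from a per-interval balancing with deterministic $h$, as you suggest, but from the moment order $p$ forced on the stochastic convolution by the random interval count — this is the missing idea in your proposal.
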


\begin{proof}
	\emph{Step 1.} As we mentioned above, we only consider $\gamma>1.$ The linear case is simpler and analogous to \cite{BouardLzwei}, Proposition 4.1. We want to prove that there is a uniform constant $C>0$ such that
	\begin{align}\label{uniformEstimateTruncatedSolution}
	\sup_{n\in\N} \E \norm{u_n}_{L^q(0,r;L^{\alpha+1})}\le C.
	\end{align}
	We set 
	$Y_r:=L^q(0,r;\LalphaPlusEins)$
	and fix $n\in\N$ as well as 
	\begin{align*}
	\delta:=1+\frac{d}{4}(1-\alpha), \qquad \tilde{\delta}:=1+\frac{d}{2}(1-\gamma),\qquad \theta=\frac{\alpha+1-2\gamma}{(\alpha-1)\gamma}.
	\end{align*}
	Then, we have $\frac{1}{2\gamma}=\frac{\theta}{2}+\frac{1-\theta}{\alpha+1}$ and 
	by interpolation and Proposition \ref{massConservation},
	\begin{align}\label{interpolationInequalityStrichartz}
	\norm{u_n}_{L^{\tilde{q}}(0,\sigma_n,L^{2\gamma})}\le \norm{u_n}_{L^\infty(0,\sigma_n;L^2)}^\theta \norm{u_n}_{L^q(0,\sigma_n;L^{\alpha+1})}^{1-\theta}
	\le \norm{u_0}_{L^2}^\theta \norm{u_n}_{L^q(0,\sigma_n;L^{\alpha+1})}^{1-\theta}
	\end{align}
	almost surely for all $t\in [0,T].$
	Let us recall that $u_n$ has the representation
	\begin{align*}
	u_n=U(\cdot)u_0+K_{det}^n u_n+K_{Strat}^n u_n+K_{Stoch}^n u_n\quad \text{in $M_\Filtration^p(\Omega,E_T).$}
	\end{align*}
	Next, we fix $\omega\in\Omega$ and $\sigma_n(\omega)\in (0,T]$ to be specified later. 
	Then, we apply the deterministic Strichartz inequalities from Proposition \ref{DetStrichartz} to estimate $K_{det}$ and $K_{Strat}$ (compare the proof of Proposition \ref{cutoffEquationLzwei}) and obtain
	\begin{align*}
	\norm{u_n}_{Y_{\sigma_n}}&\le C \norm{u_0}_{L^2}+ C \sigma_n^\delta \norm{u_n}_{Y_{\sigma_n}}^\alpha
	+C \sigma_n^{\tilde{\delta}} \norm{u_n}_{L^{\tilde{q}}(0,\sigma_n,L^{2\gamma})}^{2\gamma-1}\sumM \norm{e_m}_{L^\infty}^2+\norm{K_{Stoch}u_n}_{Y_{\sigma_n}}\nonumber\\
	&\le C \norm{u_0}_{L^2}+ C \sigma_n^\delta \norm{u_n}_{Y_{\sigma_n}}^\alpha
	+C \sigma_n^{\tilde{\delta}} \norm{u_0}_{L^2}^{(2\gamma-1)\theta} \norm{u_n}_{Y_{\sigma_n}}^{(2\gamma-1)(1-\theta)}\sumM \norm{e_m}_{L^\infty}^2+\norm{K_{Stoch}u_n}_{Y_{\sigma_n}}\nonumber\\
	&\le C \norm{u_0}_{L^2}+ C \sigma_n^{\delta} \left[1 + \sigma_n^{\tilde{\delta}-\delta}\norm{u_0}_{L^2}^{(2\gamma-1)\theta}\sumM \norm{e_m}_{L^\infty}^2\right]\norm{u_n}_{Y_{\sigma_n}}^\alpha\\&
	\quad+C \sigma_n^{\tilde{\delta}} \norm{u_0}_{L^2}^{(2\gamma-1)\theta} \sumM \norm{e_m}_{L^\infty}^2+\norm{K_{Stoch}u_n}_{Y_{\sigma_n}}\nonumber\\
	&\le K_n+ C_1 \sigma_n^{\delta} \norm{u_n}_{Y_{\sigma_n}}^\alpha
	\end{align*}
	with
	\begin{align*}
	K_n:=&C \norm{u_0}_{L^2}+C T^{\tilde{\delta}} \norm{u_0}_{L^2}^{(2\gamma-1)\theta} \sumM \norm{e_m}_{L^\infty}^2+\norm{K_{Stoch}u_n}_{Y_T},\\	
	C_1:=&C \left[1 + T^{\tilde{\delta}-\delta} \norm{u_0}_{L^2}^{(2\gamma-1)\theta}\sumM \norm{e_m}_{L^\infty}^2\right].
	\end{align*}
	W.l.o.g we assume $u_0\neq 0$ and thus $K_n>0.$ We conclude
	\begin{align*}
	\frac{\norm{u_n}_{Y_{\sigma_n}}}{K_n}\le 1+  C_1 \sigma_n^{\delta} K_n^{\alpha-1}\left(\frac{\norm{u_n}_{Y_{\sigma_n}}}{K_n}\right)^\alpha.
	\end{align*}
	Now, the following fact 
	\begin{align}\label{calculusFact}
	\forall x\ge 0\exists c_1\le 2, c_2>c_1: \quad x\le 1+\frac{x^\alpha}{2^{\alpha+1}}\quad \Rightarrow \quad x\le c_1 \quad \text{or}\quad x\ge c_2.
	\end{align}
	from elementary calculus yields
	\begin{align*}
	\norm{u_n}_{Y_{\sigma_n}}\le c_1 K_n \le 2 K_n,
	\end{align*}
	if we choose $\sigma_n$ according to 
	$C_1 \sigma_n^{\delta} K_n^{\alpha-1}\le\frac{1}{2^{\alpha+1}},$
	which is fulfilled by
	\begin{align*}
	\sigma_n=C_1^{-\frac{1}{\delta}} \left(2^{\alpha+1}K_n^{\alpha-1}\right)^{-\frac{1}{\delta}}\land T.
	\end{align*}
	Note that the second alternative in \eqref{calculusFact} can be excluded because of $\norm{u_n}_{Y_0}=0$ and the continuity of the map $t\mapsto \norm{u_n}_{Y_t}.$ 	
	Next, we decompose $\Omega=\Omega_1\cup\Omega_2$ with
	\begin{align*}
	\Omega_1:= \left\{C_1^{-\frac{1}{\delta}} \left(2^{\alpha+1}K_n^{\alpha-1}\right)^{-\frac{1}{\delta}}<T\right\},\qquad \Omega_2:= \left\{C_1^{-\frac{1}{\delta}} \left(2^{\alpha+1}K_n^{\alpha-1}\right)^{-\frac{1}{\delta}}\ge T\right\}.
	\end{align*}
	Fix $\omega \in \Omega_1$ and define 
	$N:=\lfloor\frac{T}{\sigma_n}\rfloor.$ 
	Using the abbreviations $Y_N:=L^q(N\sigma_n,T;\LalphaPlusEins)$ and
	\begin{align*}
	Y_j:=L^q(j\sigma_n,(j+1)\sigma_n;\LalphaPlusEins),\qquad j=0,\dots, N-1,
	\end{align*}
	analogous estimates as above yield
	\begin{align*}
	\norm{u_n}_{Y_j}&\le C \norm{u_n(j\sigma_n)}_{L^2}+ C \sigma_n^\delta \norm{u_n}_{Y_j}^\alpha
	+C \sigma_n^{\delta} \norm{u_n}_{L^{\tilde{q}}(j\sigma_n,\sigma_{n+1},L^{2\gamma})}^{2\gamma-1}\sumM \norm{e_m}_{L^\infty}^2+\norm{K_{Stoch}u_n}_{Y_j}
	\end{align*}
	for all $j=0,\dots, N$ and hence, we obtain $\norm{u_n}_{Y_j}\le 2 K_n.$	
	We conclude
	\begin{align}\label{Omega1Estimate}
	\norm{u_n}_{Y_T}\le \sum_{j=0}^N \norm{u_n}_{Y_j}\le 2 \left(N+1\right) K_n\le 2 \left(\frac{T}{\sigma_n}+1\right) K_n
	\le 2 K_n+ 2^{\frac{\alpha+1}{\delta}+1}C_1^\frac{1}{\delta} T K_n^{\frac{\alpha-1}{\delta}+1}.
	\end{align} 
	Due to $\norm{u_n}_{Y_T}\le 2 K_n$ on $\Omega_2,$ the estimate \eqref{Omega1Estimate} holds almost surely. 
	We set $p:=\frac{\alpha-1}{\delta}+1$ and integrate over $\Omega$ to obtain
	\begin{align*}
	\norm{u_n}_{L^1(\Omega,Y_T)}
	&\le 2 \E \Big[K_n\Big]+ 2^{\frac{\alpha+1}{\delta}+1}C_1^\frac{1}{\delta} T \E \Big[K_n^{\frac{\alpha-1}{\delta}+1}\Big]\lesssim 1+ \E \norm{K_{Stoch}u_n}_{Y_T}+   \E \norm{K_{Stoch}u_n}_{Y_T}^{p}\\
	&\le 1+\norm{K_{Stoch}u_n}_{L^p(\Omega,Y_T)}+\norm{K_{Stoch}u_n}_{L^p(\Omega,Y_T)}^p
	\end{align*} 
	Now, we choose $\tilde{p}\in (p\gamma,\infty)$ according to 
	$\frac{1}{p\gamma}=\frac{\theta}{\tilde{p}}+\frac{1-\theta}{1}$ and by the estimate
	\begin{align*}
	\norm{K_{Stoch}u_n}_{L^p(\Omega,Y_T)}&\lesssim \norm{\varphi_n(u,s) \vert u_n\vert^{\gamma-1}u_n}_{L^p(\Omega,L^2(0,T;L^2))}
	\le T^{\delta} \norm{u_n}_{L^{p\gamma}(\Omega,L^{\tilde{q}}(0,T;L^{2\gamma}))}^\gamma\\
	&\le T^{\delta} \norm{u_n}_{L^{\tilde{p}}(\Omega,L^{\infty}(0,T;L^{2}))}^{\gamma\theta}\norm{u_n}_{L^{1}(\Omega;Y_T)}^{\gamma(1-\theta)}
	=T^{\delta} \norm{u_0}_{L^2}^{\gamma\theta}\norm{u_n}_{L^{1}(\Omega;Y_T)}^{\gamma(1-\theta)}\\
	&\lesssim \norm{u_n}_{L^{1}(\Omega;Y_T)}^{\gamma(1-\theta)},
	\end{align*}	
	we end up with
	\begin{align*}
	\norm{u_n}_{L^1(\Omega,Y_T)}
	&
	\lesssim 1+ \norm{u_n}_{L^{1}(\Omega;Y_T)}^{\gamma(1-\theta)}+\norm{u_n}_{L^{1}(\Omega;Y_T)}^{p\gamma(1-\theta)}
	\lesssim 1+\norm{u_n}_{L^{1}(\Omega;Y_T)}^{p\gamma(1-\theta)}.
	\end{align*} 
	In particular, there is $C=C(\norm{u_0}_{L^2},\norm{e_m}_{\ell^2(\N,L^\infty)},T,\alpha,\gamma)>0$ with 
	\begin{align*}
	\sup_{n\in\N}\norm{u_n}_{L^1(\Omega,Y_T)}\le C,\qquad n\in\N,
	\end{align*}
	if we have
	\begin{align*}
	p\gamma(1-\theta)<1\qquad \Leftrightarrow\qquad \gamma <\frac{\alpha-1}{\alpha+1} \frac{4+d(1-\alpha)}{4\alpha+d(1-\alpha)}+1.
	\end{align*}
	\emph{Step 2.} 
	Using the result of the first step and taking the expectation in \eqref{interpolationInequalityStrichartz}, we obtain
	\begin{align*}
	\norm{u_n}_{L^1(\Omega, L^{\tilde{q}}(0,T;L^{2\gamma}))}\le \norm{u_0}_{L^2}^\theta C^{1-\theta}
	\end{align*}
	and the definition of $\tau_n$ followed by the Tschebycheff inequality and \eqref{uniformEstimateTruncatedSolution} yield
	\begin{align*}
	\Prob\left(\tau_n=T\right)&=\Prob\left(\norm{u_n}_{Y_T}+\norm{u_n}_{L^{\tilde{q}}(0,T;L^{2\gamma})}\le n\right)\ge 1-\frac{\norm{u_n}_{L^1(\Omega,Y_T)}+\norm{u_n}_{L^1(\Omega,L^{\tilde{q}}(0,T;L^{2\gamma}))}}{n}\\
	&\ge 1-\frac{C+\norm{u_0}_{L^2}^\theta C^{1-\theta}}{n}.
	\end{align*}
	By the continuity of the measure, we conclude 
	\begin{align*}
	\Prob\left(\tau_\infty=T\right)\ge\Prob\left(\bigcup_{n\in\N}\left\{\tau_n=T\right\}\right)=\lim_{n\to \infty} \Prob\left(\tau_n=T\right)=1.
	\end{align*}
\end{proof}

\begin{Remark}
	We comment on the cases, which have been excluded for the global existence result in Proposition \ref{globalExistence}. 
	The proof cannot be applied to the critical case $\alpha=1+\frac{4}{d},$ where we have $\delta=0,$ since the strategy crucially relies on $\delta>0$ to apply \eqref{calculusFact}. But global existence for general $L^2$-initial data cannot be expected in this case, anyway, since there are blow-up examples in the deterministic setting for the focusing nonlinearity, see \cite{Merle}.\\
	
	The restriction to real valued coefficients $e_m$ can be dropped in the case $\gamma=1.$ Indeed, one can proceed as in \cite{BarbuL2} to deduce the estimate  
	\begin{align}\label{L1BoundednessOfMass}
			\E \Big[\sup_{t\in[0,T]}\norm{u_n(t)}_{L^2}^p\Big]\le D_p,\qquad p\in[1,\infty),
	\end{align}
	based on the formula
		\begin{align*}
		\norm{u_n(t)}_\Lzwei^2=\norm{u_0}_\Lzwei^2- 2 \int_0^t \Real \skpLzwei{ u_n(s)}{\im B u_n(s) \df W(s)},\qquad t\in[0,T].
		\end{align*} 
	and a Gronwall argument which is restricted to $\gamma=1.$ Then, the estimate \eqref{L1BoundednessOfMass} can be used to substitute the pathwise estimate from Proposition \ref{massConservation} in the proof of Proposition \ref{globalExistence}.
\end{Remark}

\textbf{Acknowledgement:} The author gratefully acknowledges financial  support by the Deutsche Forschungsgemeinschaft (DFG) through CRC 1173.

\bibliographystyle{alpha}
\bibliography{ReferencesAll}

\newcommand{\etalchar}[1]{$^{#1}$}
\begin{thebibliography}{HMMS13}

\bibitem[BCI{\etalchar{+}}94]{PhysicsPaper}
O.~Bang, P.~L. Christiansen, F.~If, K.~\O{}. Rasmussen, and Y.~B. Gaididei.
\newblock Temperature effects in a nonlinear model of monolayer {Scheibe}
  aggregates.
\newblock {\em Phys. Rev. E}, 49:4627--4636, May 1994.

\bibitem[BGT04]{Burq}
N.~Burq, P.~G\'erard, and N.~Tzvetkov.
\newblock Strichartz inequalities and the nonlinear {Schr\"{o}dinger} equation
  on compact manifolds.
\newblock {\em American Journal of Mathematics}, 126 (3):569--605, 2004.

\bibitem[BM13]{StochStrichartz}
Z.~Brze{\'{z}}niak and A.~Millet.
\newblock On the stochastic {Strichartz} estimates and the stochastic nonlinear
  {Schr{\"o}dinger} equation on a compact riemannian manifold.
\newblock {\em Potential Analysis}, 41(2):269--315, 2013.

\bibitem[Brz97]{BrzezniakConvolutions}
Z.~Brze{\'z}niak.
\newblock On stochastic convolution in {Banach} spaces and applications.
\newblock {\em Stochastics: An International Journal of Probability and
  Stochastic Processes}, 61(3-4):245--295, 1997.

\bibitem[BRZ14]{BarbuL2}
V.~Barbu, M.~R\"ockner, and D.~Zhang.
\newblock Stochastic nonlinear {Schr\"odinger} equations with linear
  multiplicative noise: Rescaling approach.
\newblock {\em Journal of Nonlinear Science}, 24(3):383--409, 2014.

\bibitem[BRZ16]{BarbuH1}
V.~Barbu, M.~R{\"o}ckner, and D.~Zhang.
\newblock Stochastic nonlinear {Schr{\"o}dinger} equations.
\newblock {\em Nonlinear Analysis: Theory, Methods \& Applications},
  136:168--194, 2016.

\bibitem[BRZ17]{BarbuNoBlowUp}
Viorel Barbu, Michael R{\"o}ckner, and Deng Zhang.
\newblock Stochastic nonlinear schr{\"o}dinger equations: no blow-up in the
  non-conservative case.
\newblock {\em Journal of Differential Equations}, 2017.

\bibitem[Caz03]{Cazenave}
T.~Cazenave.
\newblock {\em Semilinear Schr{\"o}dinger Equations}.
\newblock Courant lecture notes in mathematics. American Mathematical Society,
  2003.

\bibitem[dBD99]{BouardLzwei}
A.~de~Bouard and A.~Debussche.
\newblock A stochastic nonlinear {Schr\"odinger} equation with multiplicative
  noise.
\newblock {\em Communications in Mathematical Physics}, 205(1):161--181, 1999.

\bibitem[dBD02]{deBouardDebusscheBlowUp1}
A.~de~Bouard and A~Debussche.
\newblock On the effect of a noise on the solutions of the focusing
  supercritical nonlinear {Schr{\"o}dinger} equation.
\newblock {\em Probability theory and related fields}, 123(1):76--96, 2002.

\bibitem[dBD03]{BouardHeins}
A.~de~Bouard and A.~Debussche.
\newblock The stochastic nonlinear {Schr\"odinger} equation in h 1.
\newblock {\em Stochastic Analysis and Applications}, 21(1):97--126, 2003.

\bibitem[dBD{\etalchar{+}}05]{deBouardDebusscheBlowUp2}
A.~de~Bouard, A.~Debussche, et~al.
\newblock Blow-up for the stochastic nonlinear {Schr{\"o}dinger} equation with
  multiplicative noise.
\newblock {\em The Annals of Probability}, 33(3):1078--1110, 2005.

\bibitem[DBD06]{deBouardDebusscheSemidiscreteScheme}
A.~De~Bouard and A.~Debussche.
\newblock Weak and strong order of convergence of a semidiscrete scheme for the
  stochastic nonlinear {Schr\"odinger} equation.
\newblock {\em Applied Mathematics and Optimization}, 54(3):369--399, 2006.

\bibitem[DDM02]{DebusscheNumericalFocusing}
A.~Debussche and L.~Di~Menza.
\newblock Numerical simulation of focusing stochastic nonlinear
  {Schr{\"o}dinger} equations.
\newblock {\em Physica D: Nonlinear Phenomena}, 162(3):131--154, 2002.

\bibitem[Doi96]{Doi96}
S.-I. Doi.
\newblock Remarks on the {Cauchy} problem for {Schr\"odinger}-type equations.
\newblock {\em Communications in Partial Differential Equations},
  21(1-2):163--178, 1996.

\bibitem[DPZ14]{daPrato}
G.~Da~Prato and P.J. Zabczyk.
\newblock {\em Stochastic Equations in Infinite Dimensions}.
\newblock Encyclopedia of Mathematics and Its Applications. Cambridge
  University Press, 2014.

\bibitem[HMMS13]{ISEM}
D.~Hundertmark, M.~Meyries, L.~Machinek, and R.~Schnaubelt.
\newblock {16. Internet Seminar on Evolution Equations: Operator Semigroups and
  Dispersive Equations }.
\newblock [pdf] available at:
  http://www.math.kit.edu/iana3/~schnaubelt/media/isem16-skript.pdf, 2013.

\bibitem[LP14]{Linares}
F.~Linares and G.~Ponce.
\newblock {\em Introduction to nonlinear dispersive equations}.
\newblock Springer, 2014.

\bibitem[Mer93]{Merle}
F.~Merle.
\newblock Determination of blow-up solutions with minimal mass for nonlinear
  {Schr\"odinger} equations with critical power.
\newblock {\em Duke Math. J.}, 69(2):427--454, 02 1993.

\bibitem[MMT08]{Marzuola}
J.~Marzuola, J.~Metcalfe, and D.~Tataru.
\newblock Strichartz estimates and local smoothing estimates for asymptotically
  flat {Schr\"odinger} equations.
\newblock {\em Journal of Functional Analysis}, 255(6):1497–1553, September
  2008.

\bibitem[Sei93]{Seidler1993}
J.~Seidler.
\newblock Da {Prato}-{Zabczyk's} maximal inequality revisited. i.
\newblock {\em Mathematica Bohemica}, 118(1):67--106, 1993.

\bibitem[vNVW07]{UMDStochIntegration}
J.M.A.M. van Neerven, M.~C. Veraar, and L.~Weis.
\newblock Stochastic integration in {UMD} {Banach} spaces.
\newblock {\em Ann. Probab.}, 35(4):1438--1478, 07 2007.

\bibitem[vNVW08]{UMDstochEvo}
J.M.A.M. van Neerven, M.C. Veraar, and L.~Weis.
\newblock Stochastic evolution equations in {UMD} {Banach} spaces.
\newblock {\em Journal of Functional Analysis}, 255(4):940 -- 993, 2008.

\bibitem[Zha17]{ZhangStochasticStrichartz}
D.~Zhang.
\newblock Strichartz and local smoothing estimates for stochastic dispersive
  equations with linear multiplicative noise.
\newblock {\em preprint}, 2017.
\newblock arXiv:1709.03812.

\end{thebibliography}
\Addresses
\end{document}